\newtheorem{theorem}{Theorem}
\newtheorem{lemma}[theorem]{Lemma}
\newtheorem{proposition}[theorem]{Proposition}
\newtheorem{corollary}[theorem]{Corollary}
\theoremstyle{definition}
\theoremstyle{remark}
\newtheorem{remark}[theorem]{Remark}
\newcommand\todo[1]{\par\bigskip\noindent TODO: {\color{magenta}#1}\par\bigskip}
\newcommand\RE{\mathbb{R}}
\newcommand\dual[2]{{}_{#2'}\langle#1\rangle_{#2}}
\newcommand\sfA{\mathsf{A}}
\newcommand\sfB{\mathsf{B}}
\newcommand\sfM{\mathsf{M}}
\newcommand\sfK{\mathsf{K}}
\newcommand\sfAkk{\sfA_{\sfK\sfK}}
\newcommand\sfu{\mathsf{u}}
\newcommand\sfp{\mathsf{p}}
\newcommand\sff{\mathsf{f}}
\newcommand\sfg{\mathsf{g}}
\newcommand\sfx{\mathsf{x}}
\newcommand\sfy{\mathsf{y}}
\newcommand\jump[2]{ \llbracket     #1 \rrbracket_{#2}}
\newcommand\sfzero{\mathsf{0}}
\newcommand\kerh{\mathsf{K}_{\sfB_h}}
\newcommand\Ker{\mathsf{K}_{\sfB}}
\newcommand\bfsigma{\boldsymbol{\sigma}}
\newcommand\bfpsi{\boldsymbol{\psi}}
\newcommand\bftau{\boldsymbol{\tau}}
\newcommand\bfvarsigma{\boldsymbol{\varsigma}}
\newcommand\bx{\mathbf{x}}
\newcommand\bn{\mathbf{n}}
\newcommand\grad{\operatorname\nabla}
\renewcommand\div{\operatorname{\mathrm{div}}}
\newcommand\Hdiv{\mathbf{H}(\div;\Omega)}
\newcommand\Vh{\Sigma_h}
\newcommand\Qh{U_h}
\newcommand\Po{\mathcal{P}_0}
\newcommand\Wone{W_1}
\newcommand\Qzero{U_0}
\newcommand\Qone{U_1}
\newcommand\Qtwo{U_2}
\newcommand\Vone{\Sigma_1}
\newcommand\Vtwo{\Sigma_2}
\newcommand\Vt{\tilde\Sigma}
\newcommand\Span{\text{span}}
\newcommand\mut{\underline\mu}
\newcommand\Nht{N_{\mut}}
\newcommand\bfsigmaone{\boldsymbol{\varphi}_{1,h}}
\newcommand\bfsigmatwo{\boldsymbol{\varphi}_{2,h}}
\newcommand\uone{w_{1,h}}
\newcommand\utwo{w_{2,h}}
\newcommand\projo{\Pi_0}
\DeclareRobustCommand{\meshcrossed}{\textsc{Crossed}\xspace}
\DeclareRobustCommand{\meshright}{\textsc{Right}\xspace}
\DeclareRobustCommand{\meshnonstructured}{\textsc{Non Structured}\xspace}
\begin{document}

\title[Is the inf-sup condition necessary?]{On the necessity of the inf-sup
condition for a mixed finite element formulation}


\author{Fleurianne Bertrand}
\address{University of Twente, The Netherlands}
\curraddr{}
\email{}
\thanks{}

\author{Daniele Boffi}
\address{King Abdullah University of Science and Technology (KAUST), Saudi
Arabia and University of Pavia, Italy}
\curraddr{}
\email{}
\thanks{}

\subjclass[2000]{Primary 65N30. Secondary 65N12}

\date{}

\begin{abstract}

We study a non standard mixed formulation of the Poisson problem, sometimes
known as dual mixed formulation.
For reasons related to the equilibration of the flux, we use finite elements
that are conforming in $\Hdiv$ for the approximation of the gradients, even
if the formulation would allow for discontinuous finite elements.
The scheme is not uniformly inf-sup stable, but we can show existence and
uniqueness of the solution, as well as optimal error estimates for the
gradient variable when suitable regularity assumptions are made. Several
additional remarks complete the paper, shedding some light on the sources of
instability for mixed formulations.

\end{abstract}

\maketitle

\section{Introduction}
\label{se:intro}

In this paper we discuss the numerical approximation of saddle point problems
of the following form: given two Hilbert spaces $V$ and $Q$, two continuous
bilinear forms $a(\cdot,\cdot):V\times V\to\RE$ and $b(\cdot,\cdot):V\times
Q\to\RE$, and two functionals $f\in V'$ and $g\in Q'$, find $u\in V$ and $p\in
Q$ such that
\begin{equation}
\left\{
\aligned
&a(u,v)+b(v,p)=\dual{f,v}{V}&&\forall v\in V\\
&b(u,q)=\dual{g,q}{Q}&&\forall q\in Q.
\endaligned
\right.
\label{eq:mixed}
\end{equation}

It is well-known that a conforming approximation of the problem relies on
suitable stability conditions, which are usually referred to as \emph{inf-sup
conditions} (see Section~\ref{se:infsup}).
While it is universally understood that the inf-sup conditions are sufficient
for the quasi-optimal convergence of any Galerkin discretization, the question
whether such conditions are also necessary is less studied; nevertheless it is
a common belief that in general the inf-sup conditions are essentially
sufficient and necessary for the optimal behavior of a numerical scheme and
everybody agrees that inf-sup unstable formulations should be avoided unless
special tricks are adopted (stabilizations, filtering of spurious modes,
special meshes, etc.).

In this paper we discuss the necessity of the inf-sup conditions by studying
the approximation of a non standard mixed formulation for the Poisson
equation. In particular, we present a scheme which, under suitable
conditions, is optimally convergent even if the inf-sup constant goes to zero
as the mesh is refined. This (counter-)example can be consider as an extension
of a one dimensional toy problem that has been studied in~\cite{1D}. The
interested reader is also referred to the abstract setting sketched
in~\cite[Section~5.6.2]{bbf} where it is shown what can happen when the
inf-sup condition goes wrong.

After an introductory section about the inf-sup conditions, in
Section~\ref{se:dualmixed} we present the dual mixed formulation of the
Poisson equation. Section~\ref{se:infsuph} deals with discrete inf-sup
constant, including some numerical tests showing the mesh dependent behavior
of the stability condition. In Section~\ref{se:splitting} we show how the
finite element spaces can be spit into a stable part and an unstable one.
Sections~\ref{se:convergence} and~\ref{se:num} present the convergence
theoretical results and some numerical tests confirming the theory. Finally,
two appendices conclude the paper with some links between the considered
problem and a flux equilibration strategy.

\section{Generalities about the inf-sup conditions}
\label{se:infsup}

In this section and in the sequel of this paper we follow the framework
of~\cite{bbf}; we recall some relevant and well known results for completeness
and for setting our notation.

The conforming Galerkin approximation of the mixed formulation presented
in~\eqref{eq:mixed} consists in choosing appropriate finite element subspaces
$V_h\subset V$ and $Q_h\subset Q$, and in finding $u_h\in V_h$ and $p_h\in
Q_h$ such that
\begin{equation}
\left\{
\aligned
&a(u_h,v)+b(v,p_h)=\dual{f,v}{V}&&\forall v\in V_h\\
&b(u_h,q)=\dual{g,q}{Q}&&\forall q\in Q_h.
\endaligned
\right.
\label{eq:mixedh}
\end{equation}

Let $N_V$ be the dimension of $V_h$ and $N_Q$ the one of $Q_h$, the matrix
form of the discrete problem expressed in~\eqref{eq:mixedh} is given by
\begin{equation}
\left(
\begin{matrix}
\sfA&\sfB^\top\\
\sfB&\sfzero
\end{matrix}
\right)
\left(
\begin{matrix}
\sfu\\
\sfp
\end{matrix}
\right)
=
\left(
\begin{matrix}
\sff\\
\sfg
\end{matrix}
\right),
\label{eq:matrix}
\end{equation}
where $\sfA$ is a square matrix of size $N_V\times N_V$, $\sfB$ is a
rectangular matrix of size $N_Q\times N_V$, $\sfu\in\RE^{N_V}$ and
$\sfp\in\RE^{N_Q}$ are column vector representations of $u\in V_h$ and $p\in
Q_h$, respectively, and $\sff\in\RE^{N_V}$ and $\sfg\in\RE^{N_Q}$ are column
vector realizations of the right hand sides $f$ and $g$, respectively.

The necessary and sufficient conditions for the solvability
of~\eqref{eq:matrix} are summarized in~\cite[Theorem~3.2.1]{bbf}, which we now
recall for the reader's convenience.
We denote by $\sfK$ the kernel of $\sfB$
\[
\sfK=\ker\sfB.
\]
The restriction of $\sfA$ to $\sfK$ is denoted by $\sfAkk$
\[
\sfAkk=\Pi_\sfK\sfA E_\sfK,
\]
where $\Pi_\sfK$ and $E_\sfK$ are the projection from $\RE^{N_V}$ onto $\sfK$
and the embedding from $\sfK$ into $\RE^{N_V}$, respectively. The two
conditions equivalent to the solvability of~\eqref{eq:matrix} (for all possible
right hand sides) are the following ones.

\begin{description}
\item[M1]
The matrix $\sfAkk$ is invertible.

\item[M2]
$N_V\ge N_Q$ and the matrix $\sfB$ is full rank.

\end{description}
\textbf{M1} can be expressed by saying that the operator
associated with $\sfAkk$ is surjective or, equivalently, injective;
analogously, \textbf{M2} states that the operator associated with
$\sfB$ is surjective or, equivalently, that the one associated with
$\sfB^\top$ is injective.

The essential ideas behind the inf-sup theory is that a uniform stability of
problem~\eqref{eq:mixedh} with respect to the parameter $h$ requires that
conditions \textbf{M1} and \textbf{M2} are made explicit and uniform with
respect to $h$. This is done by introducing suitable inf-sup conditions.

Let $\kerh$ be the subspace of $V_h$ associated with the kernel of the matrix
$\sfB$
\[
\kerh=\{v_h\in V_h:b(v_h,q)=0\ \forall q\in Q_h\}.
\]
Then hypotheses \textbf{M1} and \textbf{M2} correspond to the following two
inf-sup conditions, respectively (where, as usual, when there is an inf-sup
involving fractions, we understand that the infimum and the supremum are taken
over non vanishing functions).

\begin{description}
\item[IS1]
There exists a constant $\alpha_h>0$ such that
\begin{equation}
\inf_{v_h\in\kerh}\sup_{w_h\in\kerh}\frac{a(v_h,w_h)}{\|v_h\|_V\|w_h\|_V}
\ge\alpha_h.
\label{eq:infsup1h}
\end{equation}

\item[IS2]
There exists a constant $\beta_h>0$ such that
\begin{equation}
\inf_{q_h\in Q_h}\sup_{v_h\in V_h}\frac{b(v_h,q_h)}{\|v_h\|_V\|q_h\|_Q}
\ge\beta_h.
\label{eq:infsup2h}
\end{equation}

\end{description}

We assume that the continuous problem~\eqref{eq:mixed} is stable; in
particular, the following conditions analogue to \textbf{IS1} and \textbf{IS2}
guarantee this property, where the continuous kernel is defined as
\[
\Ker=\{v\in V:b(v,q)=0\ \forall q\in Q\}.
\]

\begin{itemize}
\item
There exists a constant $\alpha>0$ such that
\begin{equation}
\inf_{v\in\Ker}\sup_{w\in\Ker}\frac{a(v,w)}{\|v\|_V\|w\|_V}
\ge\alpha
\label{eq:infsup1a}
\end{equation}
and
\begin{equation}
\inf_{w\in\Ker}\sup_{v\in\Ker}\frac{a(v,w)}{\|v\|_V\|w\|_V}
\ge\alpha.
\label{eq:infsup1b}
\end{equation}

\item
There exists a constant $\beta>0$ such that
\begin{equation}
\inf_{q\in Q}\sup_{v\in V}\frac{b(v,q)}{\|v\|_V\|q\|_Q}
\ge\beta.
\label{eq:infsup2}
\end{equation}

\end{itemize}

The following theorem summarizes the stability and convergence result that is
obtained when the inf-sup conditions are uniform with respect to the parameter
$h$.

\begin{theorem}
If there exist $\alpha_0>0$ and $\beta_0>0$ such that the constants in
\textbf{IS1} and \textbf{IS2} are uniformly bounded below, that is,
$\alpha_h\ge\alpha_0$ and $\beta_h\ge\beta_0$ for all $h$, then the following
quasi-optimal error estimate holds true
\begin{equation}
\|u-u_h\|_V+\|p-p_h\|_Q\le C\inf_{\substack{v_h\in V_h\\ q_h\in Q_h}}
\left(\|u-v_h\|_V+\|p-q_h\|_Q\right),
\label{eq:quasiopt}
\end{equation}
where $(u,p)$ and $(u_h,p_h)$ are the solutions of~\eqref{eq:mixed}
and~\eqref{eq:mixedh}, respectively.

\label{th:infsup}
\end{theorem}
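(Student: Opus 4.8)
The plan is to follow the classical stability-plus-consistency argument for mixed methods (cf.\ \cite[Chapter~5]{bbf}), isolating the single place where the \emph{uniformity} of the inf-sup constants is genuinely used. The continuous problem~\eqref{eq:mixed} is assumed stable, hence uniquely solvable; the uniform versions of \textbf{IS1} and \textbf{IS2} are the quantitative $h$-independent forms of \textbf{M1} and \textbf{M2}, so~\eqref{eq:mixedh} is uniquely solvable for every right hand side and the error $(u-u_h,p-p_h)$ is well defined. Since the scheme is conforming ($V_h\subset V$, $Q_h\subset Q$) and the bilinear forms coincide at the two levels, subtracting~\eqref{eq:mixedh} from~\eqref{eq:mixed} tested on $V_h\times Q_h$ gives the Galerkin orthogonality relations
\begin{equation}
a(u-u_h,w_h)+b(w_h,p-p_h)=0\ \ \forall w_h\in V_h,\qquad b(u-u_h,r_h)=0\ \ \forall r_h\in Q_h.
\label{eq:galortho}
\end{equation}

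Next, fix arbitrary $v_h\in V_h$, $q_h\in Q_h$ and set $\xi_h:=u_h-v_h\in V_h$, $\eta_h:=p_h-q_h\in Q_h$. Using~\eqref{eq:galortho} one checks that $(\xi_h,\eta_h)$ solves the discrete mixed problem
\begin{equation}
a(\xi_h,w_h)+b(w_h,\eta_h)=a(u-v_h,w_h)+b(w_h,p-q_h),\qquad b(\xi_h,r_h)=b(u-v_h,r_h)
\label{eq:pertmixed}
\end{equation}
for all $w_h\in V_h$, $r_h\in Q_h$. By continuity of $a$ and $b$, the right hand sides of~\eqref{eq:pertmixed}, read as functionals on $V_h$ and $Q_h$, have norms bounded by $C(\|u-v_h\|_V+\|p-q_h\|_Q)$ with $C$ depending only on $\|a\|$ and $\|b\|$. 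Therefore, once we know that the discrete saddle-point operator associated with the left hand side of~\eqref{eq:pertmixed} is invertible with inverse bounded \emph{uniformly in $h$}, we get $\|\xi_h\|_V+\|\eta_h\|_Q\le C(\|u-v_h\|_V+\|p-q_h\|_Q)$, and the triangle inequality together with the arbitrariness of $(v_h,q_h)$ yields~\eqref{eq:quasiopt} with an $h$-independent constant.

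The core of the proof is thus the uniform stability lemma: there is $\gamma_0>0$, depending only on $\alpha_0$, $\beta_0$, $\|a\|$, $\|b\|$, such that for every $(z_h,s_h)\in V_h\times Q_h$ there exists $(w_h,r_h)\in V_h\times Q_h$ with $\|w_h\|_V+\|r_h\|_Q\le C(\|z_h\|_V+\|s_h\|_Q)$ and $a(z_h,w_h)+b(w_h,s_h)+b(z_h,r_h)\ge\gamma_0(\|z_h\|_V+\|s_h\|_Q)^2$. I would build the test pair in the usual three steps: (i) split $z_h=z_h^0+z_h^\perp$ with $z_h^0\in\kerh$; (ii) use \textbf{IS2} to choose $\tilde w_h\in V_h$ with $b(\tilde w_h,s_h)\ge\beta_0\|s_h\|_Q^2$ and $\|\tilde w_h\|_V=\|s_h\|_Q$, and to choose (by the duality form of \textbf{IS2}, which gives $\|z_h^\perp\|_V\le\beta_0^{-1}\sup_{r_h}b(z_h^\perp,r_h)/\|r_h\|_Q$) an $r_h$ with $b(z_h,r_h)=b(z_h^\perp,r_h)\ge\beta_0\|z_h^\perp\|_V^2$; (iii) use \textbf{IS1} to choose $y_h\in\kerh$ with $a(z_h^0,y_h)\ge\alpha_0\|z_h^0\|_V^2$ and $\|y_h\|_V=\|z_h^0\|_V$. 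Then take $w_h:=\delta_1\tilde w_h+\delta_2 y_h$ and a suitable multiple $\delta_3 r_h$, with fixed positive weights $\delta_1\ll\delta_2\ll\delta_3$ depending only on $\alpha_0,\beta_0,\|a\|$. Because $z_h^0\in\kerh$ and $y_h\in\kerh$, the only surviving $b(\cdot,s_h)$ term is $\delta_1 b(\tilde w_h,s_h)$, and the cross terms $a(z_h,\tilde w_h)$, $a(z_h^\perp,y_h)$ are absorbed by Young's inequality into the positive contributions $\delta_1\beta_0\|s_h\|_Q^2$, $\delta_2\alpha_0\|z_h^0\|_V^2$, $\delta_3\beta_0\|z_h^\perp\|_V^2$. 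The main obstacle, and the only delicate point, is exactly this bookkeeping: \textbf{IS1} supplies a test function only inside $\kerh$, so the action of $a(z_h,\cdot)$ on $z_h^\perp$ and the mixed terms must be dominated through \textbf{IS2}, and the weights $\delta_i$ must be tuned so that $\gamma_0$ stays bounded away from $0$; the point of the hypothesis is that $\alpha_0,\beta_0$ (and $\|a\|,\|b\|$) are $h$-independent, so the $\delta_i$ and hence $\gamma_0$ can be chosen once and for all. With the lemma in hand, the argument above closes and the constant in~\eqref{eq:quasiopt} depends only on $\alpha_0$, $\beta_0$ and the continuity constants, as claimed.
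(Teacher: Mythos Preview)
The paper does not actually supply a proof of this theorem: it is stated as a background result and the reader is referred to~\cite[Theorem~5.2.1]{bbf} (see Remark~\ref{re:infsup} immediately after the statement). Your sketch follows the classical stability-plus-consistency route that underlies that reference: Galerkin orthogonality, reduction to a discrete saddle-point problem with data controlled by the best-approximation errors, and a uniform discrete stability lemma built from \textbf{IS1}--\textbf{IS2}. So there is no discrepancy in approach to flag; your argument is the standard one.

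One small comment on your stability lemma: \textbf{IS1} is stated in the paper only as a one-sided inf-sup on $\kerh$, while in step~(iii) you implicitly use that for a given $z_h^0$ one can find $y_h\in\kerh$ with $a(z_h^0,y_h)\ge\alpha_0\|z_h^0\|_V\|y_h\|_V$. This is unproblematic here because $\kerh$ is finite dimensional, so the one-sided inf-sup already implies invertibility of $\sfAkk$ and the reverse inf-sup with the same constant; it is worth saying so explicitly, since in the continuous setting the paper carefully assumes both~\eqref{eq:infsup1a} and~\eqref{eq:infsup1b}.
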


\begin{remark}
The constant $C$ in~\eqref{eq:quasiopt} could be made explicit in terms of
$\alpha_0$ and $\beta_0$. The interested reader is referred
to~\cite[Theorem~5.2.1]{bbf}.
\label{re:infsup}
\end{remark}

\section{A non-standard mixed formulation for the Laplace equation}
\label{se:dualmixed}

In this paper we consider the homogeneous Dirichlet problem for the Laplace
equation on a polygonal domain in $\mathbb{R}^2$: given $f$ find $u$ such that
\begin{equation}
\left\{
\aligned
&-\Delta u=f&&\text{in }\Omega\\
&u=0&&\text{on }\partial\Omega.
\endaligned
\right.
\label{eq:poisson}
\end{equation}

More general elliptic equations might be considered, but we believe that this
is the simplest and most effective setting for the presentation of our
results.

We split the second order equation as a system of two first order equations by
introducing the variable $\bfsigma=-\grad u$. As opposed to the standard mixed
formulation, we integrate by parts the equilibrium equation and not the
equation defining $\bfsigma$, and obtain the following problem: given $f$ in
$H^{-1}(\Omega)$, find $\bfsigma\in L^2(\Omega)^2$ and $u\in H^1_0(\Omega)$
such that
\begin{equation}
\left\{
\aligned
&(\bfsigma,\bftau)+(\bftau,\grad u)=0&&\forall\bftau\in L^2(\Omega)^2\\
&(\bfsigma,\grad v)=-\langle f,v\rangle&&\forall v\in H^1_0(\Omega).
\endaligned
\right.
\label{eq:dualmixed}
\end{equation}
Sometimes this formulation is called dual mixed formulation to differentiate
it from the standard primal mixed formulation.

\begin{remark}
Our interest in problem~\eqref{eq:dualmixed} is related to an analogue
formulation used in elasticity (see~\cite{joerg}). A numerical study of the
inf-sup condition in the case of the Laplace equation was presented
in~\cite{pamm2}.
\end{remark}

The mixed formulation is well posed in the chosen functional spaces. For
completeness, this is proved in the next theorem.

\begin{theorem}
The mixed formulation presented in~\eqref{eq:dualmixed} is well posed in
the sense that the inf-sup conditions~\eqref{eq:infsup1a},
\eqref{eq:infsup1b}, and~\eqref{eq:infsup2} are satisfied.
\label{th:dualmixed}
\end{theorem}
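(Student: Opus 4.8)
The plan is to verify the three inf-sup conditions directly with the spaces $V=L^2(\Omega)^2$, $Q=H^1_0(\Omega)$, $a(\bfsigma,\bftau)=(\bfsigma,\bftau)$, and $b(\bftau,v)=(\bftau,\grad v)$. First I would identify the kernel $\Ker=\{\bftau\in L^2(\Omega)^2:(\bftau,\grad v)=0\ \forall v\in H^1_0(\Omega)\}$. Since $\grad H^1_0(\Omega)$ is a closed subspace of $L^2(\Omega)^2$ (by Poincaré, $v\mapsto\grad v$ is an isomorphism onto its image), $\Ker$ is exactly its orthogonal complement, i.e. the space of (weakly) divergence-free fields with vanishing normal trace in the appropriate sense; the precise description is not even needed. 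On $\Ker$, the bilinear form $a$ is just the $L^2$ inner product, so $a(\bftau,\bftau)=\|\bftau\|_V^2$ and both \eqref{eq:infsup1a} and \eqref{eq:infsup1b} hold with $\alpha=1$; coercivity on the whole space makes this immediate.

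For \eqref{eq:infsup2} I would, given $q\in H^1_0(\Omega)$, simply take $\bftau=\grad q\in L^2(\Omega)^2$ as the test function. Then $b(\bftau,q)=(\grad q,\grad q)=\|\grad q\|_{L^2}^2$, while $\|\bftau\|_V=\|\grad q\|_{L^2}$, so the quotient equals $\|\grad q\|_{L^2}=\|q\|_Q$ (using the $H^1_0$-seminorm, equivalent to the full norm by Poincaré–Friedrichs). Hence
\[
\sup_{\bftau\in V}\frac{b(\bftau,q)}{\|\bftau\|_V\|q\|_Q}\ge\frac{\|\grad q\|_{L^2}^2}{\|\grad q\|_{L^2}\,\|q\|_Q}=\frac{\|\grad q\|_{L^2}}{\|q\|_Q}=:\beta>0,
\]
uniformly in $q$, which is \eqref{eq:infsup2}. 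If instead the full $H^1$ norm is used on $Q$, $\beta$ is the reciprocal of the Poincaré constant.

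There is essentially no hard part here: the dual mixed formulation is well posed precisely because $a$ is coercive on all of $V$ (not merely on the kernel) and because $b$ inherits its stability from the Poincaré inequality. The only point requiring a word of care is the choice of norm on $Q=H^1_0(\Omega)$ and the resulting value of $\beta$; I would state explicitly that we equip $H^1_0(\Omega)$ with the $\|\grad\cdot\|_{L^2}$ norm, so that all three constants can be taken equal to $1$, and remark that this well-posedness at the continuous level is exactly what makes the later mesh-dependent degeneration of $\beta_h$ in Section~\ref{se:infsuph} a genuinely discrete phenomenon.
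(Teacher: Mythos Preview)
Your argument is correct and is essentially the paper's own proof: coercivity of $a$ on all of $L^2(\Omega)^2$ (as the identity) gives \eqref{eq:infsup1a}--\eqref{eq:infsup1b} with $\alpha=1$, and the choice $\bftau=\grad q$ together with Poincar\'e gives \eqref{eq:infsup2}. The only cosmetic difference is that you spend a line describing the kernel $\Ker$, which is unnecessary since coercivity on the whole space already implies the kernel inf-sup conditions.
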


\begin{proof}

The inf-sup conditions~\eqref{eq:infsup1a} and~\eqref{eq:infsup1b} follow from
the fact that the bilinear form $a(\cdot,\cdot)$ corresponds to the identity
operator in $L^2(\Omega)^2$, which is clearly invertible on the whole space.
The inf-sup condition~\eqref{eq:infsup2} for the bilinear form
$b(\cdot,\cdot)$ follows from the inclusion $\grad H^1_0(\Omega)\subset
L^2(\Omega)^2$: given $v\in H^1_0(\Omega)$, the vectorfield $\bftau=\grad v$
satisfies $b(\bftau,v)=\|\grad v\|^2_{L^2(\Omega)}\ge
C_1\|v\|^2_{H^1_0(\Omega)}$ and $\|\bftau\|_{L^2(\Omega)}=\|\grad
v\|_{L^2(\Omega)}\le C_2\|v\|_{H^1_0(\Omega)}$, that is~\eqref{eq:infsup2}
with $\beta=C_1/C_2$.

\end{proof}

Given two discrete subspaces $\Vh\subset L^2(\Omega)^2$ and $\Qh\subset
H^1_0(\Omega)$, the discretization or problem~\eqref{eq:dualmixed} reads: find
$\bfsigma_h\in\Vh$ and $u_h\in\Qh$ such that
\begin{equation}
\left\{
\aligned
&(\bfsigma_h,\bftau)+(\bftau,\grad u_h)=0&&\forall\bftau\in\Vh\\
&(\bfsigma_h,\grad v)=-\langle f,v\rangle&&\forall v\in\Qh.
\endaligned
\right.
\label{eq:dualmixedh}
\end{equation}

\begin{remark}
Since the inf-sup conditions~\eqref{eq:infsup1a} and~\eqref{eq:infsup1b} are
satisfied on the entire space $L^2(\Omega)^2$, any choice of discrete spaces
$\Vh$ and $\Qh$ will satisfy uniformly the discrete inf-sup
condition~\eqref{eq:infsup1h}. It follows that the only condition to be shown
for the stability of the discretization, is the uniform bound of the discrete
inf-sup constant in~\eqref{eq:infsup2h} associated with the bilinear form
$b(\cdot,\cdot)$.
\end{remark}

A natural choice for the discrete spaces is given by discontinuous piecewise
polynomials of degree $k$ (in each component) for $\Vh$ and continuous
piecewise polynomials of degree $k+1$ for $\Qh$. In the next proposition we
state the stability and the quasi-optimal convergence of the resulting scheme.  

\begin{proposition}
For $k\ge0$ let $\Vh$ be the space of discontinuous piecewise polynomials of
degree $k$ in each component and $\Qh$ be the space of continuous piecewise
polynomials of degree $k+1$ with zero boundary conditions. Then the
approximation~\eqref{eq:dualmixedh} of the mixed
formulation~\eqref{eq:dualmixed} is uniformly stable in the sense of
Theorem~\ref{th:infsup} and the following quasi-optimal error estimate holds
true
\[
\|\bfsigma-\bfsigma_h\|_{L^2(\Omega)}+\|u-u_h\|_{H^1_0(\Omega)}\le
C\inf_{\substack{\bftau\in\Vh\\ v_h\in\Qh}}
\left(\|\bfsigma-\bftau\|_{L^2(\Omega)}+\|u-v\|_{H^1_0(\Omega)}\right).
\]
In particular, if the solution $(\bfsigma,u)$ is smooth enough, we have
\[
\aligned
\|\bfsigma-\bfsigma_h\|_{L^2(\Omega)}+\|u-u_h\|_{H^1_0(\Omega)}&\le
Ch^{k+1}\left(\|\bfsigma\|_{H^{k+1}(\Omega)}+\|u\|_{H^{k+2}(\Omega)}\right)\\
&\le Ch^{k+1}\|u\|_{H^{k+2}(\Omega)}.
\endaligned
\]
In general, if $u\in H^{1+s}(\Omega)$ with $s\le k+1$, we obtain
\[
\|\bfsigma-\bfsigma_h\|_{L^2(\Omega)}+\|u-u_h\|_{H^1_0(\Omega)}
\le Ch^s\|u\|_{H^{1+s}(\Omega)}.
\]
\label{pr:P0P1}
\end{proposition}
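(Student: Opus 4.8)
The plan is to verify that the discrete scheme inherits the uniform stability of the continuous problem, invoke Theorem~\ref{th:infsup}, and then finish with standard approximation estimates. As already observed in the remark following~\eqref{eq:dualmixedh}, the discrete inf-sup condition~\eqref{eq:infsup1h} is automatic here because $a(\cdot,\cdot)$ is the $L^2(\Omega)^2$ inner product, hence coercive on all of $L^2(\Omega)^2$ and thus on $\kerh$; so the only genuine point is the uniform lower bound for the constant $\beta_h$ in~\eqref{eq:infsup2h}.

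The key observation I would use is the inclusion $\grad\Qh\subset\Vh$: the gradient of a continuous piecewise polynomial of degree $k+1$ is, componentwise, a piecewise polynomial of degree $k$ with no interelement continuity imposed, hence an element of the discontinuous space $\Vh$. This is the exact discrete analogue of the inclusion $\grad H^1_0(\Omega)\subset L^2(\Omega)^2$ that made Theorem~\ref{th:dualmixed} work. Given $v_h\in\Qh$, I would therefore take $\bftau_h=\grad v_h\in\Vh$ in the supremum defining $\beta_h$; then $b(\bftau_h,v_h)=\|\grad v_h\|_{L^2(\Omega)}^2$ and $\|\bftau_h\|_{L^2(\Omega)}=\|\grad v_h\|_{L^2(\Omega)}$, so the inf-sup quotient is at least $\|\grad v_h\|_{L^2(\Omega)}/\|v_h\|_{H^1_0(\Omega)}$. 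By the Poincar\'e inequality this is bounded below by a constant $\beta_0>0$ depending only on $\Omega$ (and equal to $1$ if the gradient norm is used on $H^1_0(\Omega)$), uniformly in $h$. With both discrete inf-sup constants uniformly bounded below, Theorem~\ref{th:infsup} immediately yields the quasi-optimal estimate.

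To turn quasi-optimality into rates I would bound the two best-approximation terms in~\eqref{eq:quasiopt} separately. For the $u$-term, a standard (quasi-)interpolation operator onto the continuous $P_{k+1}$ space --- Scott--Zhang, say, to cover low regularity --- gives $\inf_{v_h\in\Qh}\|u-v_h\|_{H^1_0(\Omega)}\le Ch^{\min(s,k+1)}\|u\|_{H^{1+s}(\Omega)}$. For the $\bfsigma$-term, since $\bfsigma=-\grad u\in H^{s}(\Omega)^2$ with $\|\bfsigma\|_{H^s(\Omega)}\le\|u\|_{H^{1+s}(\Omega)}$, the $L^2$-orthogonal projection (or any local projection) onto the discontinuous $P_k$ space $\Vh$ gives $\inf_{\bftau\in\Vh}\|\bfsigma-\bftau\|_{L^2(\Omega)}\le Ch^{\min(s,k+1)}\|\bfsigma\|_{H^s(\Omega)}$. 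Putting these together, taking $s=k+1$ in the smooth case and using $\|\bfsigma\|_{H^{k+1}(\Omega)}\le\|u\|_{H^{k+2}(\Omega)}$, produces the three displayed inequalities.

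There is no real obstacle here: the proof rests on the single structural fact $\grad\Qh\subset\Vh$, and everything else is textbook interpolation theory. The one thing I would stress is precisely that this fact uses in an essential way that $\Vh$ is \emph{discontinuous}, so that it contains $\grad\Qh$; the moment one replaces $\Vh$ by an $\Hdiv$-conforming space --- as the rest of the paper does, for flux-equilibration reasons --- the inclusion $\grad\Qh\subset\Vh$ is lost, and with it this elementary proof of uniform inf-sup stability, which is exactly the phenomenon the paper sets out to study.
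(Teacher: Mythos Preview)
Your proposal is correct and follows essentially the same approach as the paper: the paper's proof simply notes that~\eqref{eq:infsup1h} holds because $a(\cdot,\cdot)$ is the identity on $L^2(\Omega)^2$, and that~\eqref{eq:infsup2h} follows from the inclusion $\grad\Qh\subset\Vh$, exactly as you argue. You have in fact supplied more detail than the paper does (the explicit choice $\bftau_h=\grad v_h$, the Poincar\'e constant, and the interpolation estimates for the rates), but the underlying idea is identical.
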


\begin{proof}

The stability proof follows the same lines as in Theorem~\ref{th:dualmixed}.
In particular, the uniform inf-sup condition~\eqref{eq:infsup1h} is guaranteed
by the global invertibility of the bilinear form $a(\cdot,\cdot)$ which
corresponds to the identity in $L^2(\Omega)$. The uniform inf-sup
condition~\eqref{eq:infsup2h} for the bilinear form $b(\cdot,\cdot)$ follows
from the inclusion $\grad\Qh\subset\Vh$.

\end{proof}

It can actually be easily observed that if $\grad\Qh\subset\Vh$ then the mixed
formulation~\eqref{eq:dualmixedh} is equivalent to the standard Galerkin
formulation where the space $\Qh$ is used. We state this result in the
following proposition.

\begin{proposition}
If the inclusion $\grad\Qh\subset\Vh$ is satisfied, then the mixed
formulation~\eqref{eq:dualmixedh} is well posed in the sense of
Theorem~\ref{th:infsup} and the component $u_h$ of its solution solves the
standard Galerkin formulation
\begin{equation}
(\grad u_h,\grad v)=\langle f,v\rangle\qquad\forall v\in U_h.
\label{eq:Galh}
\end{equation}
The other component of the solution is given by
$\bfsigma_h=\grad u_h$.
\label{pr:equiv}
\end{proposition}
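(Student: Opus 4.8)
The plan is to show two things: first, that the inclusion $\grad\Qh\subset\Vh$ makes the scheme well posed via Theorem~\ref{th:infsup}, and second, that the first component $u_h$ of the mixed solution coincides with the Galerkin solution of~\eqref{eq:Galh}, with $\bfsigma_h=\grad u_h$. For the well-posedness part, I would recall that \textbf{IS1} holds uniformly on all of $L^2(\Omega)^2$ since $a(\cdot,\cdot)$ is the identity (as already observed in the remark after~\eqref{eq:dualmixedh}), so the only thing to check is \textbf{IS2}, i.e.\ the discrete inf-sup condition~\eqref{eq:infsup2h} for $b(\bftau,v)=(\bftau,\grad v)$. Given $v\in\Qh$, the hypothesis $\grad\Qh\subset\Vh$ lets me take $\bftau=\grad v\in\Vh$ as a test function, and then $b(\bftau,v)=\|\grad v\|^2_{L^2(\Omega)}$ while $\|\bftau\|_{L^2(\Omega)}=\|\grad v\|_{L^2(\Omega)}$; since $\|\grad v\|_{L^2(\Omega)}$ is the norm on $\Qh=H^1_0(\Omega)$-conforming space, this yields~\eqref{eq:infsup2h} with a constant independent of $h$. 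This is exactly the argument already sketched in the proof of Proposition~\ref{pr:P0P1}.

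For the identification of $u_h$, the key observation is that the first equation of~\eqref{eq:dualmixedh} can be tested against $\bftau=\grad v$ for any $v\in\Qh$ (legitimate precisely because $\grad\Qh\subset\Vh$), giving
\begin{equation}
(\bfsigma_h,\grad v)+(\grad u_h,\grad v)=0\qquad\forall v\in\Qh.
\label{eq:plan-test}
\end{equation}
Combining this with the second equation of~\eqref{eq:dualmixedh}, namely $(\bfsigma_h,\grad v)=-\langle f,v\rangle$ for all $v\in\Qh$, I substitute to eliminate $\bfsigma_h$ and obtain $-\langle f,v\rangle+(\grad u_h,\grad v)=0$, that is $(\grad u_h,\grad v)=\langle f,v\rangle$ for all $v\in\Qh$, which is precisely~\eqref{eq:Galh}. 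Since the standard Galerkin problem~\eqref{eq:Galh} is itself uniquely solvable (coercivity of $(\grad\cdot,\grad\cdot)$ on $H^1_0$), this pins down $u_h$.

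It remains to show $\bfsigma_h=\grad u_h$. From the first equation of~\eqref{eq:dualmixedh} tested against an arbitrary $\bftau\in\Vh$, I have $(\bfsigma_h+\grad u_h,\bftau)=0$ for all $\bftau\in\Vh$; taking $\bftau=\bfsigma_h+\grad u_h$, which lies in $\Vh$ since both $\bfsigma_h\in\Vh$ and $\grad u_h\in\grad\Qh\subset\Vh$, forces $\|\bfsigma_h+\grad u_h\|^2_{L^2(\Omega)}=0$, hence $\bfsigma_h=-\grad u_h$. (One should be careful here about the sign convention: with $\bfsigma=-\grad u$ in the continuous problem, the statement $\bfsigma_h=\grad u_h$ in the proposition should read $\bfsigma_h=-\grad u_h$, or equivalently the Galerkin problem is stated for $u_h$ with the matching sign; I would simply follow whichever sign convention makes~\eqref{eq:Galh} consistent with~\eqref{eq:dualmixedh}.) I do not anticipate a genuine obstacle in this proof — every step is a one-line manipulation — the only point requiring a moment of care is making sure the test function $\bfsigma_h+\grad u_h$ is admissible, which is exactly where the hypothesis $\grad\Qh\subset\Vh$ is used a second time, and keeping the sign bookkeeping straight.
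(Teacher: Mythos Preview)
Your proof is correct and essentially the same in spirit as the paper's, though you argue in the opposite direction: the paper simply inserts the ansatz $\bfsigma_h=-\grad u_h$ (with $u_h$ the Galerkin solution) into~\eqref{eq:dualmixedh} and observes that both equations are satisfied, invoking uniqueness; you instead derive~\eqref{eq:Galh} and $\bfsigma_h=-\grad u_h$ directly from~\eqref{eq:dualmixedh}. Both are one-line manipulations relying on $\grad\Qh\subset\Vh$ at exactly the same places, and you are right to flag the sign: the statement should read $\bfsigma_h=-\grad u_h$, consistently with $\bfsigma=-\grad u$, and the paper's own proof text carries the same typo.
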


\begin{proof}
It is easily seen that inserting $\bfsigma_h=\grad u_h$
into~\eqref{eq:dualmixedh}, the first equation is an identity and the second
equation corresponds precisely to~\eqref{eq:Galh}.
\end{proof}

The lowest-order element presented in Proposition~\ref{pr:P0P1} will be
referred to as the $P_0-P_1$ scheme.

In a more general context,~\cite{joerg} discusses the approximation of a
linear elasticity problem with a mixed scheme which has some analogies with
our formulation~\eqref{eq:dualmixed}. For particular reasons related to some
equilibration properties that will be made more precise later on, it is
proposed the use of Raviart--Thomas elements $RT_0$ for the definition of
$\Vh$. If \emph{discontinuous} $RT_0$ elements are used, then the same
stability proof as for the $P_0-P_1$ scheme applies. This follows from the
fact that $\Vh$ contains the space of piecewise constants. We state this
result in the following corollary for any degree $k$.

\begin{corollary}
Let $\Vh$ be the space of discontinuous Raviart--Thomas finite elements of
degree $k\ge0$ and $\Qh$ be the space of continuous piecewise polynomials of
degree $k+1$ with homogeneous boundary conditions.
Then the formulation~\eqref{eq:dualmixedh} is uniformly stable and the
following error estimate holds true if the solution $u$ is smooth enough
\[
\|\bfsigma-\bfsigma_h\|_{L^2(\Omega)}+\|u-u_h\|_{H^1_0(\Omega)}\le
Ch^s\|u\|_{H^{1+s}(\Omega)}
\]
with $s\le k+1$.
Moreover, the mixed problem is equivalent to the standard Galerkin
approximation~\eqref{eq:Galh} with continuous polynomials of degree $k+1$ and
$\bfsigma_h=\grad u_h$.
\end{corollary}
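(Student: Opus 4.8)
The plan is to deduce the whole statement from Proposition~\ref{pr:P0P1} and Proposition~\ref{pr:equiv}, the only genuinely new ingredient being an elementary polynomial inclusion. First I would record that the local Raviart--Thomas space of degree $k$ on a triangle $T$ is $RT_k(T)=\mathcal P_k(T)^2\oplus\bx\,\tilde{\mathcal P}_k(T)$, where $\tilde{\mathcal P}_k$ denotes the homogeneous polynomials of degree $k$; in particular $\mathcal P_k(T)^2\subset RT_k(T)$. Since the gradient of a globally continuous, piecewise polynomial function of degree $k+1$ restricts on each element to a vector field whose components are polynomials of degree at most $k$, it follows that $\grad\Qh\subset\Vh$ when $\Vh$ is the \emph{discontinuous} $RT_k$ space. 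It is essential here that no interelement continuity is imposed on $\Vh$; this is precisely the point where the argument would break down for $H(\div)$-conforming Raviart--Thomas elements, which is the theme of the subsequent sections.

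Once $\grad\Qh\subset\Vh$ is established, uniform stability is obtained exactly as in Proposition~\ref{pr:P0P1}: the form $a(\cdot,\cdot)$ is the $L^2(\Omega)^2$ inner product, so \eqref{eq:infsup1h} holds with constant $1$ on $\kerh$, and for \eqref{eq:infsup2h} one takes, for given $v\in\Qh$, the field $\bftau=\grad v\in\Vh$, which satisfies $b(\bftau,v)=\|\grad v\|_{L^2(\Omega)}^2\ge C_1\|v\|_{H^1_0(\Omega)}^2$ and $\|\bftau\|_{L^2(\Omega)}\le C_2\|v\|_{H^1_0(\Omega)}$, giving a discrete inf-sup constant $\beta_h\ge C_1/C_2$ independent of $h$. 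Hence Theorem~\ref{th:infsup} applies and yields a quasi-optimal error estimate for \eqref{eq:dualmixedh}.

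Finally, the equivalence with \eqref{eq:Galh} is immediate from Proposition~\ref{pr:equiv}, whose hypothesis $\grad\Qh\subset\Vh$ we have just verified: the component $u_h$ solves the standard conforming discretization of \eqref{eq:poisson} with continuous $P_{k+1}$ elements, and $\bfsigma_h=\grad u_h$ (up to the sign convention of \eqref{eq:dualmixed}). For the convergence rate I would then invoke the classical C\'ea estimate for \eqref{eq:Galh} together with the interpolation bound $\inf_{v\in\Qh}\|u-v\|_{H^1_0(\Omega)}\le Ch^s\|u\|_{H^{1+s}(\Omega)}$, valid for $u\in H^{1+s}(\Omega)$ with $s\le k+1$; since the error $\bfsigma-\bfsigma_h$ coincides, up to sign, with $\grad(u-u_h)$, the same bound controls $\|\bfsigma-\bfsigma_h\|_{L^2(\Omega)}$, and summing the two contributions gives the asserted estimate. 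There is no real obstacle in this argument: the only step requiring any thought is the inclusion $\mathcal P_k(T)^2\subset RT_k(T)$, and even that is a routine consequence of the definition of the Raviart--Thomas space; everything else is a direct appeal to the results already established for the $P_0$--$P_1$ scheme.
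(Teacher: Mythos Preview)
Your proposal is correct and follows exactly the route indicated by the paper: the corollary is not proved separately there but is deduced from the preceding discussion, namely the inclusion $\mathcal P_k(T)^2\subset RT_k(T)$ (so that $\grad\Qh\subset\Vh$ for the discontinuous Raviart--Thomas space), after which Propositions~\ref{pr:P0P1} and~\ref{pr:equiv} apply verbatim. Your write-up simply spells out the polynomial inclusion and the ensuing error estimate in more detail than the paper, and your parenthetical remark about the sign of $\bfsigma_h$ is apt, since the paper's convention $\bfsigma=-\grad u$ indeed gives $\bfsigma_h=-\grad u_h$.
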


We now consider the lowest order case, that is $k=0$, so that the
approximation of $u$ is obtained by standard piecewise linear elements.

For reasons related to the equilibration property $\div\bfsigma=f$,
in~\cite{joerg} it is proposed to use $\Hdiv$-conforming $RT_0$ elements for
the definition of $\Vh$. We are going to denote this element by $RT_0-P_1$.
Numerical evidence seems to indicate that for the elasticity problem this
choice provides a uniformly stable scheme~\cite{joerg}, while this is not the
case for the Poisson problem~\cite{pamm2}. We are going to analyze in more
detail this element in the next sections.

\section{The inf-sup condition for the $RT_0-P_1$ scheme}
\label{se:infsuph}

The first inf-sup condition \textbf{IS1}~\eqref{eq:infsup1h} is automatically
satisfied for our mixed formulation (see Remark~\ref{re:infsup}), so that we
are only discussing the second inf-sup \textbf{IS2}~\eqref{eq:infsup2h} which
reads:
\begin{equation}
\inf_{v\in\Qh}\sup_{\bftau\in\Vh}
\frac{(\bftau,\grad v)}{\|\bftau\|_{L^2(\Omega)}\|v\|_{H^1(\Omega)}}\ge\beta_h.
\label{eq:betah}
\end{equation}

We start with a positive result, showing that for all $h$ the constant
$\beta_h$ is strictly greater than zero.

\begin{theorem}

For all $v\in\Qh$ with $v\ne0$ there exists $\bftau\in\Vh$ such that
\[
(\bftau,\grad v)>0,
\]
that is the inf-sup constant in~\eqref{eq:betah} satisfies $\beta_h>0$ for all
$h$.

\label{th:existence}
\end{theorem}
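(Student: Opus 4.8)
The plan is to pin down exactly when $(\bftau,\grad v)$ can be made strictly positive. Since $\Vh\subset\Hdiv$ and every $v\in\Qh$ belongs to $H^1_0(\Omega)$, integration by parts gives
\[
(\bftau,\grad v)=-(\div\bftau,v)\qquad\forall\,\bftau\in\Vh,
\]
the boundary contribution vanishing because $v|_{\partial\Omega}=0$. Now $\div$ maps $\Vh$ — the $\Hdiv$-conforming $RT_0$ space, with no boundary condition — onto the full space of piecewise constant functions: this is the surjectivity at the end of the lowest order discrete de Rham sequence, and it also follows from a dimension count, $\dim\Vh$ being the number of edges, $\dim(\ker\div|_{\Vh})$ the number of vertices minus one, and $\#\text{edges}-\#\text{vertices}+1=\#\text{triangles}$ by Euler's formula. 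Hence $\{(\bftau,\grad v):\bftau\in\Vh\}=\{-(q,v):q\text{ piecewise constant}\}$, a linear subspace of $\RE$, and it is nonzero (equivalently contains a strictly positive number) if and only if $v$ is not $L^2$-orthogonal to the piecewise constants, i.e. if and only if $\int_T v\ne0$ for some element $T$. Once this is shown for every nonzero $v\in\Qh$, the stated bound $\beta_h>0$ follows because $v\mapsto\sup_{\bftau}(\bftau,\grad v)/(\|\bftau\|_{L^2}\|v\|_{H^1})$ is continuous and strictly positive on the compact unit sphere of $\Qh$. In short, the theorem is equivalent to injectivity of the $L^2$-projection $\projo$ onto piecewise constants, restricted to $\Qh$.

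It therefore suffices to prove: if $v\in\Qh$ and $\int_T v=0$ for every triangle $T$, then $v=0$. As $v$ is affine on each $T$ one has $\int_T v=\tfrac{|T|}{3}\bigl(v(a_1)+v(a_2)+v(a_3)\bigr)$ for the three vertices $a_i$ of $T$, so the hypothesis says that on every triangle the three nodal values sum to zero, while the nodal values at boundary vertices already vanish. I would show that these two families of relations force every nodal value to be zero, arguing by induction on the number $N$ of triangles. For $N=1$ the domain is a single triangle with all vertices on $\partial\Omega$, so $v\equiv0$. For $N\ge2$ choose any edge $e=\{a,b\}$ on $\partial\Omega$ and let $T_0=\{a,b,c\}$ be the unique triangle containing $e$; since $v(a)=v(b)=0$, the relation on $T_0$ gives $v(c)=0$. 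Delete $T_0$: the closure of $\Omega\setminus T_0$ is a simple polygon when $c\notin\partial\Omega$, and a union of at most two simple polygons meeting only at the point $c$ when $c\in\partial\Omega$; in all cases each piece is triangulated by a proper subset of the original triangles, has fewer than $N$ triangles, and carries zero nodal data on its boundary — on the part inherited from $\partial\Omega$ because $v$ was already zero there, and on each new edge $\{a,c\}$ or $\{b,c\}$ because $v$ is affine with vanishing endpoint values. The inductive hypothesis then gives $v\equiv0$ on every piece, hence on $T_0$ (whose three nodal values vanish), hence on $\Omega$.

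The genuinely delicate point is the geometric bookkeeping in the inductive step: one must check that removing the triangle sitting on a boundary edge always yields one or two bona fide simple polygons carrying induced triangulations, the only awkward configuration being the ``pinched'' one, where the apex $c$ lies on $\partial\Omega$ but is not adjacent to $a$ or $b$ along $\partial\Omega$. By contrast the analytic ingredients — the integration by parts, the surjectivity of $\div$ on $\Vh$, and the compactness argument producing $\beta_h>0$ — are routine. I expect the real hurdle in discovering the argument to be recognizing at the outset that the question is precisely whether $\projo$ is injective on $\Qh$; once that reformulation is in hand, peeling boundary triangles is the natural move.
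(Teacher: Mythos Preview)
Your proposal is correct and follows essentially the same route as the paper: integrate by parts to turn $(\bftau,\grad v)$ into $-(\div\bftau,v)$, use surjectivity of $\div:\Vh\to\Po$ to reduce the question to injectivity of $\projo$ on $\Qh$, and then establish that injectivity by peeling off triangles with a boundary edge. Your write-up is in fact more careful than the paper's---you spell out the compactness argument yielding $\beta_h>0$, justify the surjectivity of $\div$ via Euler's formula, and flag the pinched configuration in the induction---whereas the paper simply sketches the propagation from a boundary triangle to its neighbors.
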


\begin{proof}
After integration by parts and taking into account the boundary conditions, we
have
\[
(\bftau,\grad v)=-(\div\bftau,v).
\]
We denote by $\projo$ the $L^2(\Omega)$ projection onto the space of
piecewise constant functions; the term $(\bftau,\grad v)$ is maximized by
taking $\bftau\in\Vh$ with $\div\bftau=\projo v$, so that we have
\[
(\bftau,\grad v)=-\|\projo v\|^2_{L^2(\Omega)}.
\]
Hence the result follows by observing that $\projo v=0$ implies $v=0$ if
$v$ is vanishing on $\partial\Omega$.
Indeed, it is not possible to construct a function $v\in\Qh$
that is zero mean valued in each element. This is easily seen by starting from
a boundary element $T$ (with two vertices on $\partial\Omega$): if $v$ is zero
mean valued on $T$, then necessarily it vanishes on $T$; the same argument can
then be applied to the neighboring elements sharing an edge with $T$ and so on
until it is seen that $v$ must vanish on all elements of the triangulation.
\end{proof}

The immediate consequence of the previous theorem is that
problem~\eqref{eq:dualmixedh} is solvable.

\begin{corollary}
For all $f\in H^{-1}(\Omega)$ and all $h$ there exists a unique solution to
problem~\eqref{eq:dualmixedh}.
\end{corollary}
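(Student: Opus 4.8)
The statement is a direct consequence of the abstract solvability criterion recalled in Section~\ref{se:infsup}, so the plan is simply to verify its hypotheses for problem~\eqref{eq:dualmixedh}. Written in the matrix form~\eqref{eq:matrix}, this problem has $\sfA$ equal to the Gram matrix of the $L^2(\Omega)^2$ inner product on $\Vh$ and $\sfB$ the matrix of the form $b(\bftau,v)=(\bftau,\grad v)$; the right-hand side is $(\sff,\sfg)$ with $\sff=0$ and $\sfg$ the vector of the values $-\langle f,v_i\rangle$ over a basis $\{v_i\}$ of $\Qh\subset H^1_0(\Omega)$, which is well defined exactly because $f\in H^{-1}(\Omega)$. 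Since the block matrix in~\eqref{eq:matrix} is square, it suffices to check conditions \textbf{M1} and \textbf{M2}, which together are equivalent to its invertibility and hence to existence and uniqueness for every right-hand side.

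Condition \textbf{M1} is immediate: the form $a(\bfsigma,\bftau)=(\bfsigma,\bftau)$ is symmetric and positive definite on all of $L^2(\Omega)^2$, so $\sfA$, and therefore its restriction $\sfAkk$ to the subspace $\sfK=\ker\sfB$, is symmetric positive definite, hence invertible; equivalently, the discrete inf-sup condition~\eqref{eq:infsup1h} holds, as already noted after Theorem~\ref{th:dualmixed}. For condition \textbf{M2} I would invoke Theorem~\ref{th:existence}: it states that for every $v\in\Qh$ with $v\ne0$ there exists $\bftau\in\Vh$ with $(\bftau,\grad v)\ne0$, which is precisely the injectivity of the map $v\mapsto b(\cdot,v)$ on $\Vh$, i.e.\ of $\sfB^\top$; by rank--nullity this forces $\sfB$ to have rank $N_Q$, so $\sfB$ is full rank and $N_V\ge N_Q$, which is \textbf{M2}. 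With \textbf{M1} and \textbf{M2} in force the matrix in~\eqref{eq:matrix} is invertible and~\eqref{eq:dualmixedh} has a unique solution for all $f\in H^{-1}(\Omega)$ and all $h$.

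An equivalent and perhaps more transparent route, which I would also mention, is to eliminate $\bfsigma_h$: since $a$ is the $L^2$ inner product, the first equation of~\eqref{eq:dualmixedh} forces $\bfsigma_h=-P_h\grad u_h$, with $P_h$ the $L^2(\Omega)^2$ projection onto $\Vh$, and substituting into the second equation gives the symmetric positive semidefinite square system $(P_h\grad u_h,P_h\grad v)=\langle f,v\rangle$ for all $v\in\Qh$. This is uniquely solvable iff the form on the left is definite, i.e.\ iff $P_h\grad v=0$ implies $v=0$; but $P_h\grad v=0$ means $(\bftau,\grad v)=0$ for all $\bftau\in\Vh$, so Theorem~\ref{th:existence} yields $v=0$. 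I do not expect any real obstacle in this proof; the only point worth a moment's care is the translation of the ``one test function suffices'' phrasing of Theorem~\ref{th:existence} into the injectivity of $\sfB^\top$, equivalently the surjectivity of $\sfB$.
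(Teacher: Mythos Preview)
Your proposal is correct and follows exactly the line the paper intends: the corollary is stated there without proof as ``the immediate consequence of the previous theorem,'' and your argument simply spells out that Theorem~\ref{th:existence} yields \textbf{M2} (injectivity of $\sfB^\top$, hence full rank of $\sfB$) while the coercivity of $a$ on all of $L^2(\Omega)^2$ gives \textbf{M1}, so the abstract solvability criterion of Section~\ref{se:infsup} applies. The alternative elimination route you sketch is equally valid and leads to the same use of Theorem~\ref{th:existence}.
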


We postpone to Appendix~\ref{ap:fleur} further theoretical investigations
about the behavior of the inf-sup constant. Here we continue this study
numerically.

It is well known that an estimate of the inf-sup constant $\beta_h$ appearing
in~\eqref{eq:betah} can be obtained by solving an algebraic problem.
In~\cite{bbf} a singular value decomposition is used. An essentially
equivalent approach was described in~\cite{chapelle} (see
Remark~\ref{re:bbf}), based on an idea from~\cite{malkus}.

\begin{proposition}
Let $\Vh$ and $\Qh$ be finite element spaces and consider the discrete saddle
point problem~\eqref{eq:dualmixedh}.
Let $\sfA$ and $\sfB$ be the corresponding matrices appearing
in~\eqref{eq:matrix} and introduce the matrix $\sfM$ corresponding to the
$H^1_0(\Omega)$ inner product $(\grad\cdot,\grad\cdot)$ in $\Qh$. Then the
inf-sup constant $\beta_h$ in~\eqref{eq:betah} is equal to $\sqrt{\mu_{min}}$,
where $\mu_{min}$ is the smallest eigenvalue $\mu$ of the
generalized eigenvalue problem
\begin{equation}
\sfB\sfA^{-1}\sfB^\top\sfx=\mu\sfM\sfx.
\label{eq:svd}
\end{equation}
\label{pr:svd}
\end{proposition}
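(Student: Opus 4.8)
The plan is to reduce the inf-sup quotient in~\eqref{eq:betah} to a matrix expression and then recognise the resulting Rayleigh quotient as a generalized eigenvalue problem. First I would fix $v\in\Qh$ with coordinate vector $\sfy\in\RE^{N_Q}$ and rewrite the numerator and denominator in~\eqref{eq:betah} in matrix form. Writing a generic $\bftau\in\Vh$ via its coordinate vector $\sft$, one has $(\bftau,\grad v)=\sft^\top\sfB^\top\sfy$ (recall $\sfB$ represents $b(\cdot,\cdot)$), $\|\bftau\|_{L^2}^2=\sft^\top\sfA\sft$ (since $\sfA$ here is the $L^2$ mass matrix of $\Vh$, as $a(\cdot,\cdot)$ is the $L^2$ inner product), and $\|v\|_{H^1_0}^2=\sfy^\top\sfM\sfy$ by definition of $\sfM$. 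Thus
\[
\sup_{\bftau\in\Vh}\frac{(\bftau,\grad v)}{\|\bftau\|_{L^2(\Omega)}}
=\sup_{\sft\ne0}\frac{\sft^\top\sfB^\top\sfy}{(\sft^\top\sfA\sft)^{1/2}}.
\]

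The next step is to evaluate this inner supremum explicitly. Since $\sfA$ is symmetric positive definite, I would substitute $\sfs=\sfA^{1/2}\sft$, so the supremum becomes $\sup_{\sfs\ne0}\sfs^\top\sfA^{-1/2}\sfB^\top\sfy/\|\sfs\|_2$, which by Cauchy--Schwarz equals $\|\sfA^{-1/2}\sfB^\top\sfy\|_2=(\sfy^\top\sfB\sfA^{-1}\sfB^\top\sfy)^{1/2}$, with the value attained at $\sfs$ parallel to $\sfA^{-1/2}\sfB^\top\sfy$. Substituting back into~\eqref{eq:betah} gives
\[
\beta_h=\inf_{\sfy\ne0}\frac{(\sfy^\top\sfB\sfA^{-1}\sfB^\top\sfy)^{1/2}}{(\sfy^\top\sfM\sfy)^{1/2}},
\qquad\text{hence}\qquad
\beta_h^2=\inf_{\sfy\ne0}\frac{\sfy^\top\sfB\sfA^{-1}\sfB^\top\sfy}{\sfy^\top\sfM\sfy}.
\]

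Finally I would identify the right-hand side as the smallest generalized eigenvalue. The matrix $\sfB\sfA^{-1}\sfB^\top$ is symmetric positive semidefinite and $\sfM$ is symmetric positive definite (it is the stiffness matrix of the $H^1_0$ inner product on $\Qh$, which is an inner product since $\Qh\subset H^1_0(\Omega)$), so the Rayleigh quotient $\sfy^\top\sfB\sfA^{-1}\sfB^\top\sfy/\sfy^\top\sfM\sfy$ has minimum over $\sfy\ne0$ equal to the smallest eigenvalue $\mu_{min}$ of the pencil $\sfB\sfA^{-1}\sfB^\top\sfx=\mu\sfM\sfx$, by the generalized Courant--Fischer / Rayleigh quotient principle (equivalently, diagonalise via the Cholesky factor of $\sfM$). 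Therefore $\beta_h^2=\mu_{min}$, i.e. $\beta_h=\sqrt{\mu_{min}}$, as claimed. The only mildly delicate points are checking that $\sfA$ and $\sfM$ are genuinely positive definite so that $\sfA^{1/2}$, $\sfA^{-1/2}$ and the generalized eigenproblem make sense; both follow because $a(\cdot,\cdot)$ is the $L^2$ inner product on $\Vh$ and $(\grad\cdot,\grad\cdot)$ is an inner product on $\Qh\subset H^1_0(\Omega)$. One should also note that when $v$ ranges over $\Qh\setminus\{0\}$ the vector $\sfy$ ranges over $\RE^{N_Q}\setminus\{0\}$, so no nonzero test functions are lost in passing to the algebraic formulation.
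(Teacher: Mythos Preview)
Your argument is correct. The paper does not supply a formal proof of this proposition; instead it cites~\cite[Section~3.4.3]{bbf} and, in the subsequent Remark~\ref{re:bbf}, observes that $\beta_h$ equals the smallest singular value of $\sfM^{-1/2}\sfB\sfA^{-1/2}$, whose squares are the eigenvalues of $\sfM^{-1/2}\sfB\sfA^{-1}\sfB^\top\sfM^{-1/2}$, i.e.\ the generalized eigenvalues of~\eqref{eq:svd}. Your direct Rayleigh-quotient computation is the same argument unpacked: the substitution $\sfs=\sfA^{1/2}\sft$ together with Cauchy--Schwarz is precisely the step that produces the singular-value characterization, and the final passage to the generalized pencil via $\sfM$ is exactly the diagonalisation alluded to in the remark. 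So the two routes coincide in substance; yours is simply more self-contained, while the paper defers the details to the reference.

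One small point worth flagging: in~\eqref{eq:betah} the denominator is written with $\|v\|_{H^1(\Omega)}$, whereas $\sfM$ is defined via $(\grad\cdot,\grad\cdot)$. For the identity $\beta_h=\sqrt{\mu_{min}}$ to hold \emph{exactly} (rather than up to a Poincar\'e constant) one must read $\|v\|_{H^1(\Omega)}$ as the $H^1_0$ seminorm $\|\grad v\|_{L^2(\Omega)}$, which is the convention implicitly used in the paper. You handled this correctly by taking $\sfy^\top\sfM\sfy$ as the squared norm, but it would not hurt to make the identification explicit.
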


\begin{remark}
In~\cite[Section~3.4.3]{bbf} it is shown that the constant $\beta_h$ is equal
to the smallest singular value of the matrix
$\sfM^{-1/2}\sfB\sfA^{-1/2}$. This statement is equivalent to
Proposition~\ref{pr:svd} since the singular values of
$\mathcal{A}=\sfM^{-1/2}\sfB\sfA^{-1/2}$ are the square roots of the
eigenvalues of
$\mathcal{A}\mathcal{A}^\top=\sfM^{-1/2}\sfB\sfA^{-1}\sfB^\top\sfM^{-1/2}$.
Please note the typo in~\cite[Proposition~3.4.5]{bbf}, where the involved
matrix should read $S_Y^{-1} B S_X^{-1}$ instead of $S_Y B S_X$.
\label{re:bbf}
\end{remark}

The eigenvalue problem~\eqref{eq:svd} has $N(h)=\dim(\Qh)$ eigensolutions
(taking into account possibly repeated eigenvalues); we number the eigenvalues
starting from the largest one, so that $\mu_{min}=\mu_{N(h)}$,
\[
\mu_1\ge\mu_2\ge\dots\ge\mu_{N(h)}\ge0
\]
and we denote the corresponding eigenvectors by $\{\sfx_i\}\subset\RE^{N(h)}$.
Each eigenvector $\sfx_i$ represents an element $g_i$ of $\Qh$ and we have
\[
\Qh=\Span\{g_1,\dots,g_{N(h)}\}.
\]

The eigenvalue problem~\eqref{eq:svd} associated with the inf-sup condition
has also a mixed equivalent formulation:
\[
\left(
\begin{matrix}
\sfA&\sfB^\top\\
\sfB&\sfzero
\end{matrix}
\right)
\left(
\begin{matrix}
\sfy\\
\sfx
\end{matrix}
\right)
=\mu
\left(
\begin{matrix}
\sfzero&\sfzero\\
\sfzero&-M
\end{matrix}
\right)
\left(
\begin{matrix}
\sfy\\
\sfx
\end{matrix}
\right).
\]
The two formulations are easily shown equivalent to each other by solving for
$\sfy$ the first equation $\sfA\sfy+\sfB^\top\sfx=0$ and substituting into the
second equation.

This allows the definition of associated eigenvectors
$\{\sfy_i\}\subset\RE^{N(h)}$ in addition to the $\{\sfx_i\}$'s satisfying the
relation
\[
\sfA\sfy_i+\sfB^\top\sfx_i=0\qquad (i=1,\dots,N(h)).
\]
Translating into the finite element notation, we have constructed two sets of
finite element functions $\{\bfsigma_{i,h}\}\subset\Vh$ and
$\{u_{i,h}\}\subset\Qh$ that satisfy the following variational problem
\begin{equation}
\left\{
\aligned
&(\bfsigma_{i,h},\bftau)+(\bftau,\grad u_{i,h})=0&&\forall\bftau\in\Vh\\
&(\bfsigma_{i,h},\grad v)=-\mu_i(\grad u_{i,h},\grad v)&&\forall v\in\Qh
\endaligned
\right.
\label{eq:eiginfsup}
\end{equation}
and such that
\[
\aligned
&\Span\{\bfsigma_{1,h},\dots,\bfsigma_{N(h),h}\}\subset\Vh\\
&\Span\{u_{1,h},\dots,u_{N(h),h}\}=\Qh.
\endaligned
\]
The eigenvectors $\{u_{i,h}\}$ can be chosen so that
\[
\aligned
&(\grad u_{i,h},\grad u_{j,h})_{L^2(\Omega)}=0&&\text{if }i\ne j\\
&\|\grad u_{i,h}\|_{L^2(\Omega)}=1.
\endaligned
\]
It follows that also the $\{\bfsigma_{i,h}\}$ are orthogonal, since
\[
(\bfsigma_{i,h},\bfsigma_{j,h})_{L^2(\Omega)}=
-(\bfsigma_{i,h},\grad u_{j,h})_{L^2(\Omega)}=
\mu_i(\grad u_{i,h},\grad u_{j,h})_{L^2(\Omega)}=0
\qquad\text{if }i\ne j.
\]

In~\cite{pamm2} it was shown numerically that the inf-sup constant $\beta_h$
is not uniformly bounded from below. More precisely, $\mu_{N(h)}$ (together
with other eigenvalues of~\eqref{eq:svd}) tends to zero when $h$ goes to zero.
We report here again this behavior and show how it may be different depending
on the mesh sequence that we are considering.

We use three sequences of mesh on the unit square: two structured and one
unstructured. With obvious meaning, we call them \meshcrossed, \meshright, and
\meshnonstructured mesh, respectively. An example of such meshes is shown in
Figure~\ref{fg:meshes}.

\begin{figure}
\includegraphics[width=12cm]{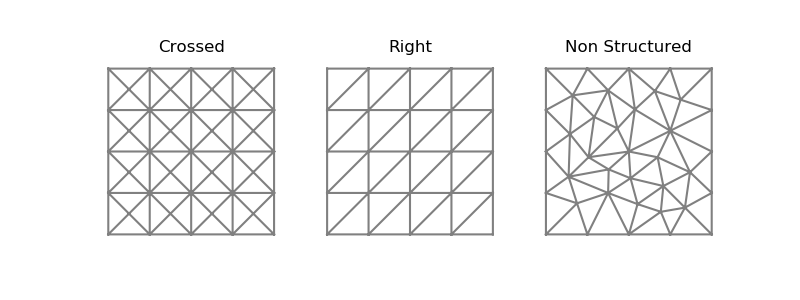}
\caption{\meshcrossed, \meshright, and \meshnonstructured meshes}
\label{fg:meshes}
\end{figure}

Table~\ref{tb:4} shows the last four computed eigenvalues
$\{\mu_{N(h)-3},\dots,\mu_{N(h)}\}$.

\begin{table}
\begin{center}
\meshcrossed mesh\nopagebreak\\
\begin{tabular}{ r|l|l|l|l }
$N(h)$ & $\mu_{N(h)-3}$ & $\mu_{N(h)-2}$ & $\mu_{N(h)-1}$ & $\mu_{N(h)}$\\ \hline
13 & 0.66666667 &        0.5 & 0.5 & 0.22222222\\
41 & 0.16521696 & 0.15643855 & 0.15643855 & 0.06604647\\
145 & 0.04880971 & 0.04191655 & 0.04191655 & 0.01698587\\
545 & 0.01268672 & 0.01065182 & 0.01065182 & 0.00427448\\
2113 & 0.00320245 & 0.00267372 & 0.00267372 & 0.00107035
\end{tabular}
\end{center}

\medskip

\begin{center}
\meshright mesh\nopagebreak\\
\begin{tabular}{ r|l|l|l|l }
$N(h)$ & $\mu_{N(h)-3}$ & $\mu_{N(h)-2}$ & $\mu_{N(h)-1}$ & $\mu_{N(h)}$\\ \hline
25 & 0.44698968 & 0.41649077 & 0.23888594 & 0.23720409\\
81 & 0.14099494 & 0.14089618 & 0.06715927 & 0.06707865\\
289 & 0.03714468 & 0.03714446 & 0.01720941 & 0.01720741\\
1089 & 0.00938762 & 0.00938762 & 0.00432346 & 0.00432341\\
4225 & 0.00235165 & 0.00235165 & 0.00108154 & 0.00108154
\end{tabular}
\end{center}

\medskip

\begin{center}
\meshnonstructured mesh\nopagebreak\\
\begin{tabular}{ r|l|l|l|l }
$N(h)$ & $\mu_{N(h)-3}$ & $\mu_{N(h)-2}$ & $\mu_{N(h)-1}$ & $\mu_{N(h)}$\\ \hline
38 & 0.25105003 & 0.22389003 & 0.16903879 & 0.11779969\\
140 & 0.10876822 & 0.0991606 & 0.08762351 & 0.05880215\\
531 & 0.0736162 & 0.06851846 & 0.06604323 & 0.05821584\\
2066 & 0.05617133 & 0.05523537 & 0.05366541 & 0.04984366\\
8128 & 0.04867193 &  0.04780397 & 0.0451059 & 0.04200516
\end{tabular}
\end{center}
\caption{Numerical estimate of the inf-sup constant for $RT_0-P_1$ scheme}
\label{tb:4}
\end{table}

It turns out that in the case of the structured meshes the smallest
eigenvalue $\mu_{N(h)}$ goes to zero quadratically in $h$, thus giving the
estimate $\beta_h=O(h)$. The behavior of the inf-sup constant on the
unstructured mesh is less critical, even if not optimal, showing a slower
decay of $\mu_{N(h)}$ as $h$ goes to zero.

It interesting to look at the eigenfunctions corresponding
to~\eqref{eq:eiginfsup}. Figure~\ref{fg:infsup} shows the eigenfunctions
$u_{1,h}$ and $u_{N(h),h}$ corresponding to the maximum and minimum eigenvalue
for the three meshes.
\begin{figure}

\meshcrossed mesh\nopagebreak\\
\includegraphics[width=5cm]{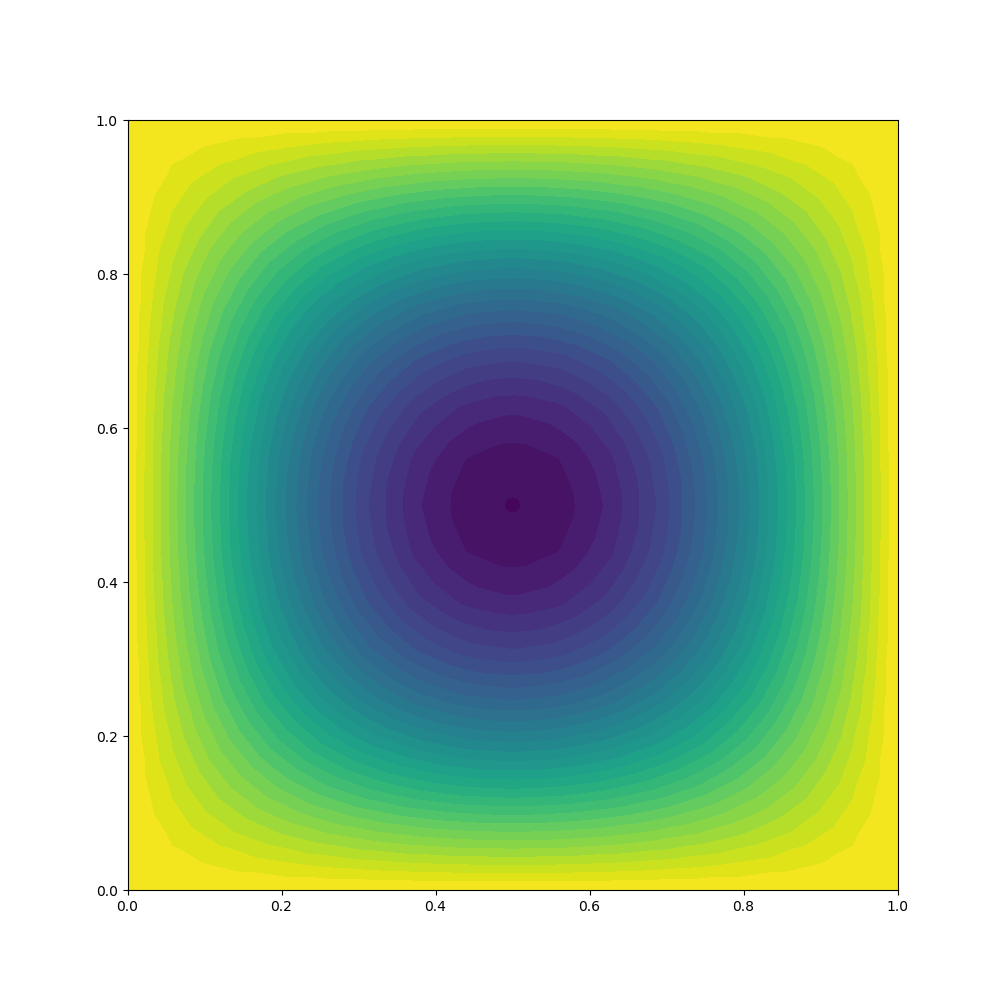}
\includegraphics[width=5cm]{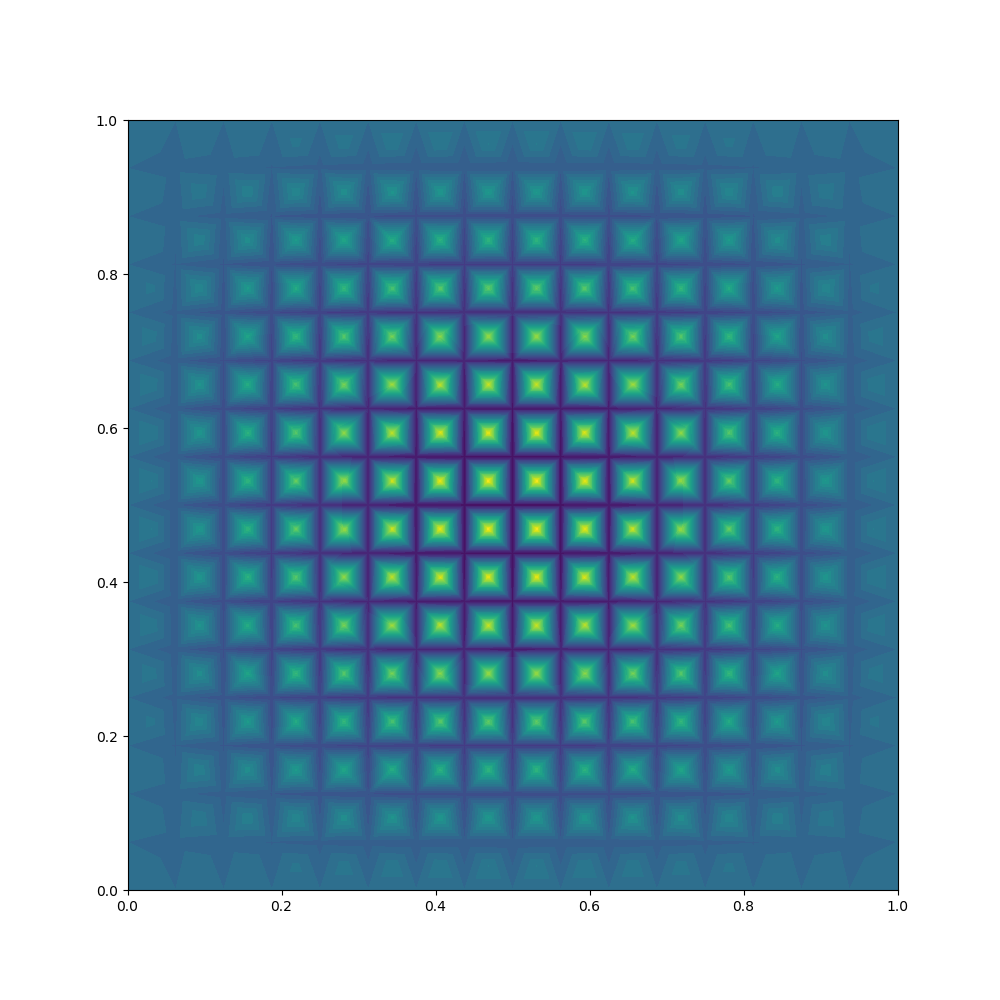}

\meshright mesh\nopagebreak\\
\includegraphics[width=5cm]{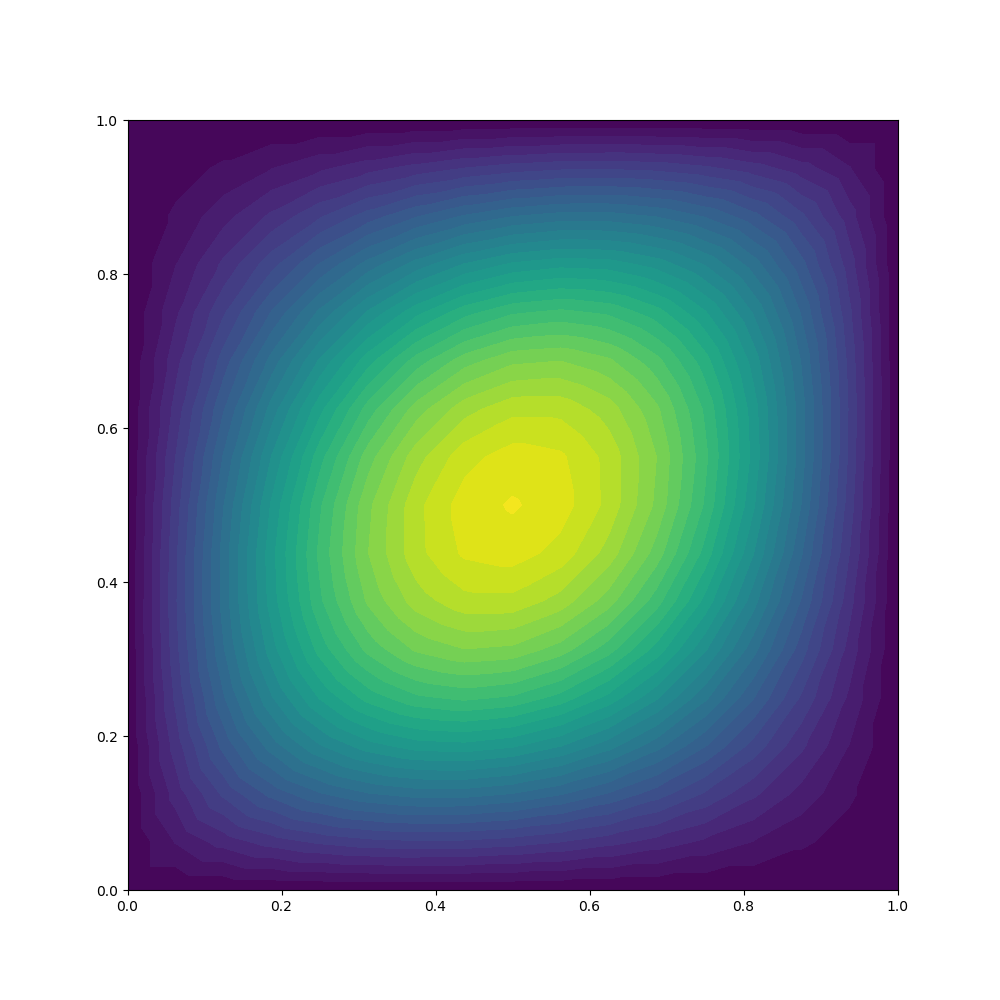}
\includegraphics[width=5cm]{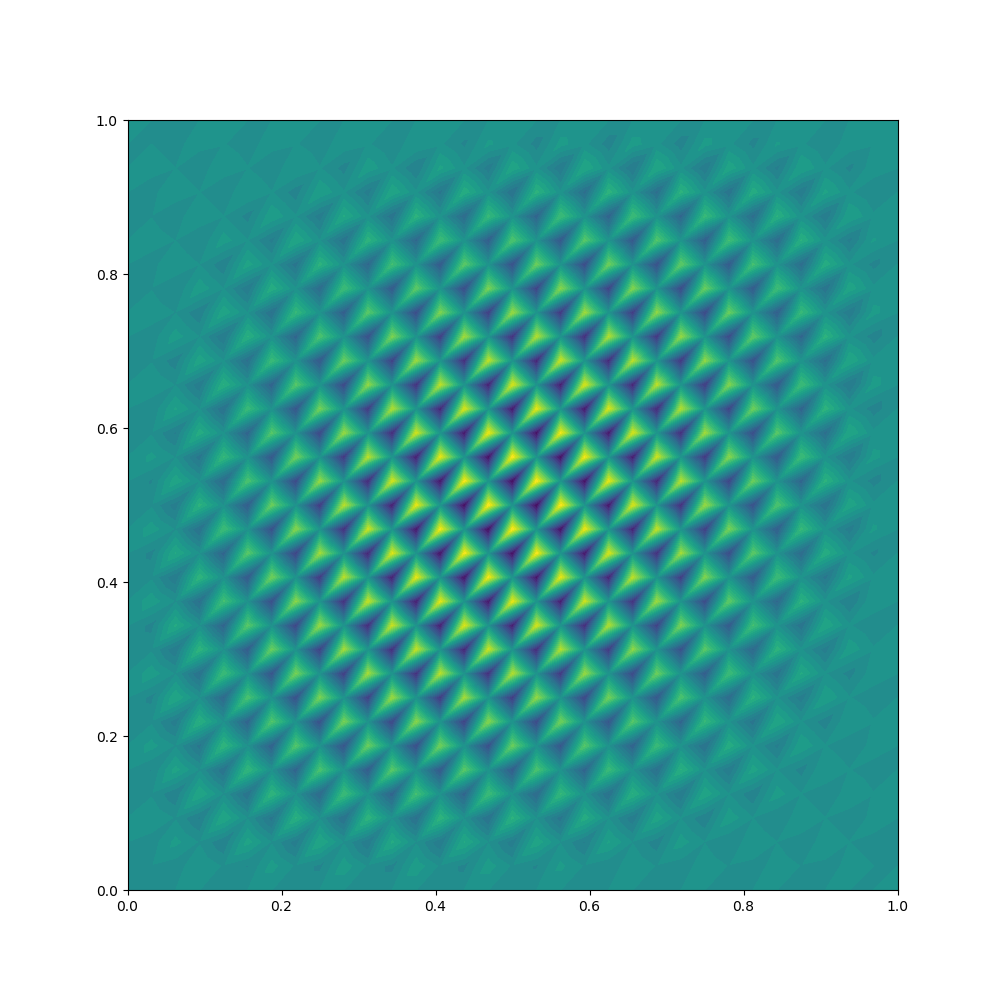}

\meshnonstructured mesh\nopagebreak\\
\includegraphics[width=5cm]{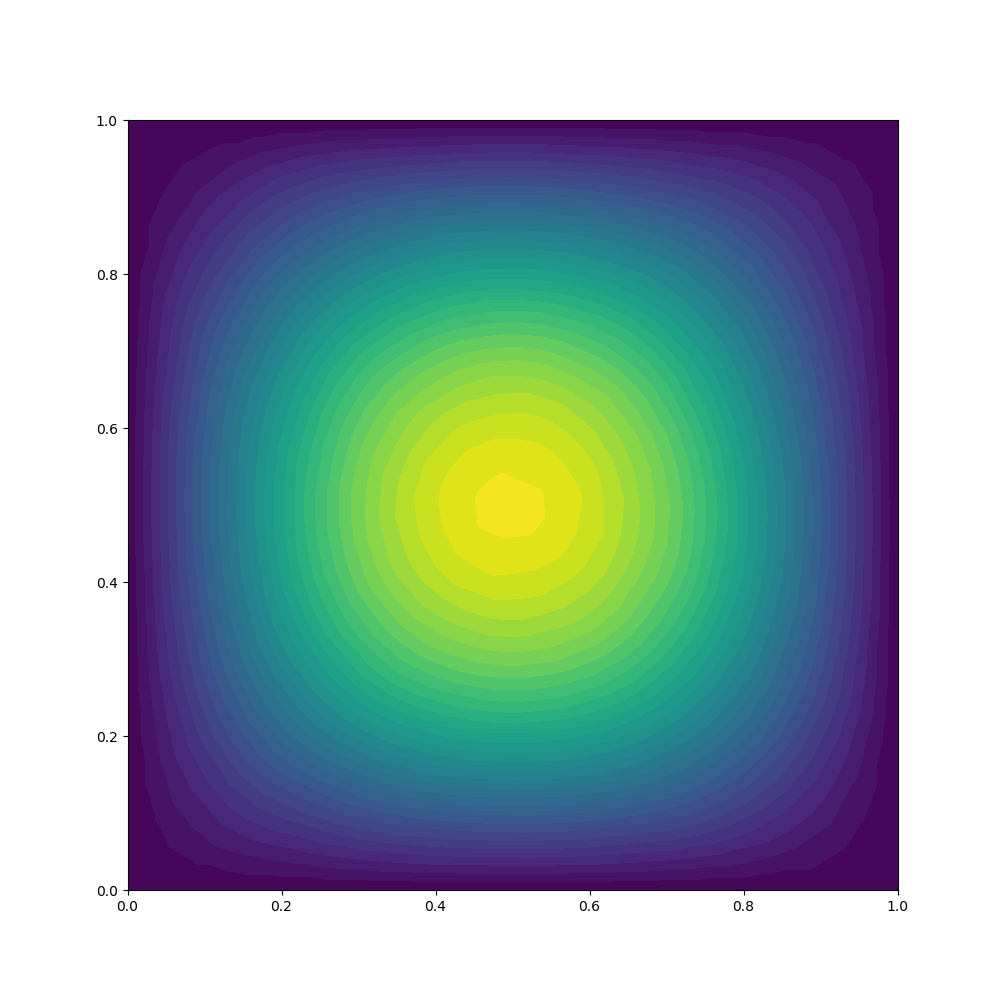}
\includegraphics[width=5cm]{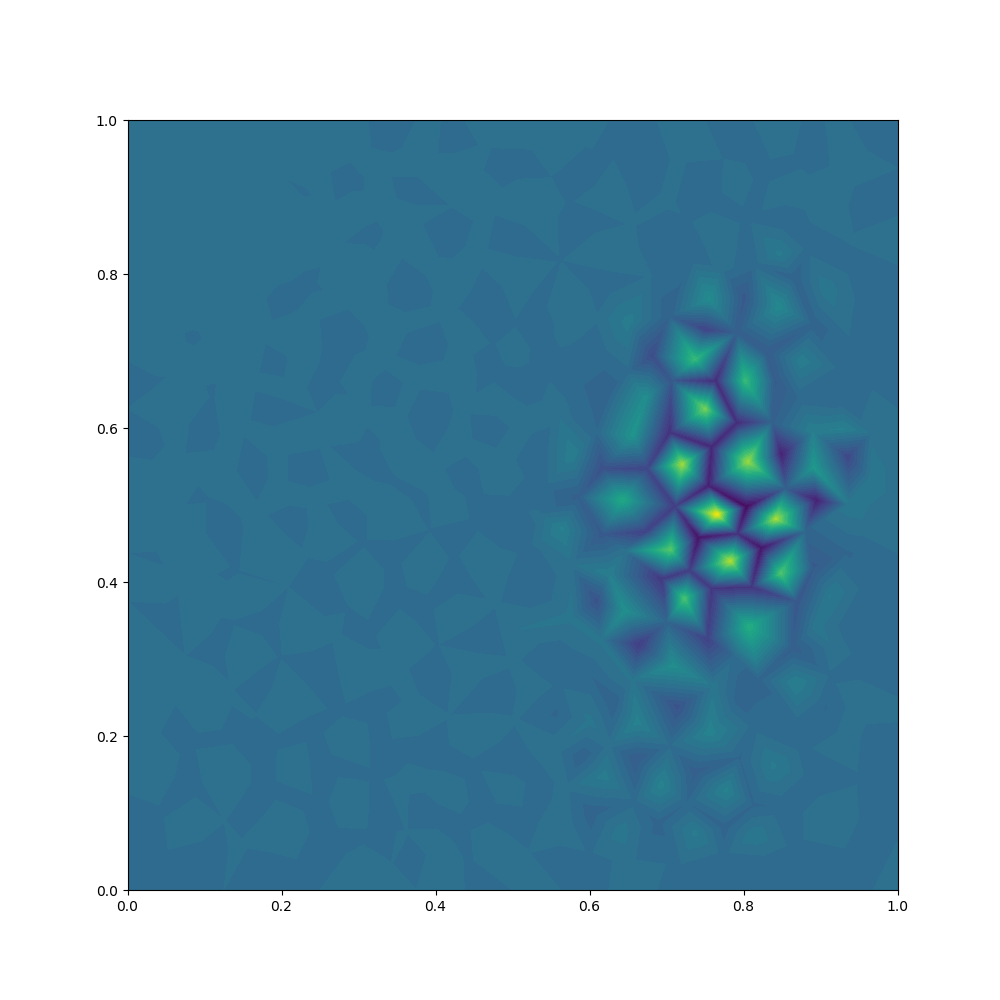}
\caption{Eigenfunctions associated with first (left) and last (right)
eigenvalues $\mu_1$ and $\mu_{N(h)}$ of~\eqref{eq:eiginfsup}}
\label{fg:infsup}
\end{figure}
It is apparent that the eigenfunctions corresponding to the smallest
eigenvalue are highly oscillatory, while the ones associated to the largest
one don't change their sign. This fact can be made more precise by identifying
an appropriate subspace of $U_h$ for which a uniform inf-sup condition holds
true. This will be done in the next section where we discuss possible
splittings of the spaces in order to separate the \emph{stable} part of the
solution from the \emph{unstable} one.

\section{Stable and unstable subspaces}
\label{se:splitting}

The following discussion identifies special subspaces of $\Qh$ for which a
uniform inf-sup condition holds true.
According to what we have seen in the previous section, we are expecting that
the degeneracy of the inf-sup constant is associated with highly oscillatory
eigenfunctions of problem~\eqref{eq:eiginfsup}.

Let $w_k\in\Qh$, $k=1,\dots,N(h)$, be the $k$-th discrete eigenfunction of the
Laplace operator approximated by the standard Galerkin method, that
is
\[
(\grad w_k,\grad v)=\lambda_k(w_k,v)\quad\forall v\in\Qh
\]
with $0<\lambda_1<\lambda_2\le\cdots$.

Our first result shows that if we fix $\bar N$ and restrict $\Qh$ to the space
spanned by $\{w_1,w_2,\dots,w_{\bar N}\}$, then the inf-sup condition holds
with a constant uniform in $h$.

\begin{theorem}
Let $\bar W_h$ be the subspace of $\Qh$ spanned by $\{w_1,w_2,\dots,w_{\bar
N}\}$. Then there exists $\bar\beta>0$ independent of $h$ such that
\begin{equation}
\sup_{\bftau\in\Vh}\frac{(\bftau,\grad w)}{\|\bftau\|_{L^2(\Omega)}}\ge
\bar\beta\qquad\forall w\in\bar W_h.
\label{eq:winfsup}
\end{equation}
\label{th:winfsup}
\end{theorem}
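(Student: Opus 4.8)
The plan is to exploit the fact that the finitely many eigenfunctions $w_1,\dots,w_{\bar N}$ of the discrete Laplacian converge, as $h\to 0$, to the corresponding eigenfunctions of the continuous Laplacian on $\Omega$, which are smooth (in fact analytic on a convex polygon, and at worst in $H^{1+s}$ for some $s>0$ on a general polygon). The key point is that for a \emph{fixed} smooth function $w$, the gradient $\grad w$ is well approximated in $L^2$ by $\Hdiv$-conforming $RT_0$ interpolation, so that choosing $\bftau$ to be (essentially) the $RT_0$ interpolant of $\grad w$ makes the numerator $(\bftau,\grad w)$ comparable to $\|\grad w\|_{L^2(\Omega)}^2$ while keeping $\|\bftau\|_{L^2(\Omega)}$ comparable to $\|\grad w\|_{L^2(\Omega)}$. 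The obstruction is that $w\in\bar W_h$ is itself $h$-dependent; the rescue is that $\bar W_h$ is a small perturbation of the fixed $\bar N$-dimensional continuous eigenspace, uniformly in $h$.

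First I would normalize: by homogeneity it suffices to prove the bound for $w\in\bar W_h$ with $\|\grad w\|_{L^2(\Omega)}=1$, and then show $\sup_{\bftau\in\Vh}(\bftau,\grad w)/\|\bftau\|_{L^2(\Omega)}\ge\bar\beta$. Second, I would invoke the convergence of the Galerkin eigenpairs: standard finite element eigenvalue approximation theory gives, for each $k\le\bar N$, that $w_k\to \tilde w_k$ in $H^1_0(\Omega)$ as $h\to 0$, where $\tilde w_k$ is the $k$-th continuous Laplace eigenfunction (assume the first $\bar N$ eigenvalues are simple, or argue with spectral projections onto clustered eigenspaces if not). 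Consequently, writing $w=\sum_{k\le \bar N} c_k w_k$ with $\sum c_k^2\lambda_k = 1$ (using $L^2$-orthonormality of the $w_k$ and the eigenrelation), the "continuous surrogate" $\tilde w:=\sum_{k\le\bar N} c_k \tilde w_k$ satisfies $\|\grad(w-\tilde w)\|_{L^2(\Omega)}\le \varepsilon(h)\to 0$ with $\varepsilon(h)$ independent of the coefficients $c_k$ (here I use that there are only finitely many basis functions and that the $c_k$ are bounded). Third, since $\tilde w$ lives in the fixed finite-dimensional space $\widetilde W:=\mathrm{span}\{\tilde w_1,\dots,\tilde w_{\bar N}\}$ of smooth functions, and $\|\tilde w\|_{H^{1+s}(\Omega)}\le C\|\grad\tilde w\|_{L^2(\Omega)}\le C'$ by norm equivalence on $\widetilde W$, I can take $\bftau_h:=\Pi_{RT}(\grad\tilde w)\in\Vh$, the $\Hdiv$-conforming $RT_0$ interpolant. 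The interpolation estimate gives $\|\grad\tilde w-\bftau_h\|_{L^2(\Omega)}\le Ch^{s}\|\tilde w\|_{H^{1+s}(\Omega)}\le C''h^{s}$, hence $\|\bftau_h\|_{L^2(\Omega)}\le C_3$ and
\[
(\bftau_h,\grad w)=\|\grad\tilde w\|_{L^2(\Omega)}^2-(\grad\tilde w-\bftau_h,\grad\tilde w)-(\bftau_h,\grad(\tilde w-w))\ge 1 - Ch^{s} - C\varepsilon(h),
\]
using $\|\grad\tilde w\|_{L^2(\Omega)}\ge 1-\varepsilon(h)$. Therefore $\sup_{\bftau}(\bftau,\grad w)/\|\bftau\|_{L^2(\Omega)}\ge (1-Ch^{s}-C\varepsilon(h))/C_3$.

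To conclude I would split into the asymptotic and pre-asymptotic regimes. For $h\le h_0$ small enough, the displayed quantity is bounded below by, say, $1/(2C_3)=:\bar\beta_1>0$. For the finitely many coarser meshes $h>h_0$, Theorem~\ref{th:existence} already guarantees $\beta_h>0$ on each mesh, and restricting the inf over $\Qh$ to the smaller space $\bar W_h$ only increases the inf-sup constant, so there is a positive lower bound $\bar\beta_2$ over this finite collection. Taking $\bar\beta:=\min(\bar\beta_1,\bar\beta_2)$ finishes the proof. The main obstacle, and the step requiring the most care, is the uniformity of the eigenfunction convergence $\varepsilon(h)$ with respect to the unknown coefficients $c_k$ of $w$ within $\bar W_h$ — this is what makes the argument work for \emph{every} $w\in\bar W_h$ rather than for a single fixed $w$ — together with handling possible multiplicities in the continuous spectrum by working with spectral projections instead of individual eigenfunctions.
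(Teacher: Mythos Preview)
Your proof is correct but takes a genuinely different route from the paper's.

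The paper stays entirely in the discrete setting: for each discrete eigenfunction $w_k$ it builds the test function $\bfvarsigma_k\in\Vh$ as the solution of the standard $RT_0$--$P_0$ mixed problem with data $-\lambda_k w_k$, so that $\div\bfvarsigma_k=-\lambda_k\projo w_k$ and $\|\bfvarsigma_k\|_{L^2}\le C_\sigma\lambda_k^{1/2}\|\grad w_k\|_{L^2}$. Integration by parts gives $(\bfvarsigma_k,\grad w_k)=\lambda_k(\projo w_k,w_k)$, and the elementary estimate $\|w_k-\projo w_k\|_{L^2}\le C_\Pi h\|\grad w_k\|_{L^2}$ yields the explicit constant $\beta_k=(1-C_\Pi^2 h^2\lambda_k)/(C_\sigma\lambda_k^{1/2})$, positive for small $h$ since $\lambda_k$ is bounded. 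Linear combinations are then handled by a direct computation exploiting the $L^2$- and $H^1$-orthogonality of the $w_k$. Your argument instead passes through the continuous eigenfunctions: you replace $w\in\bar W_h$ by its continuous surrogate $\tilde w\in\widetilde W$, and choose $\bftau_h=\Pi_{RT}(\grad\tilde w)$, relying on norm equivalence on the fixed $\bar N$-dimensional space $\widetilde W$ to control $\|\tilde w\|_{H^{1+s}}$ and hence the $RT_0$ interpolation error. The paper's route is more self-contained---it avoids the eigenfunction convergence theory (and the multiplicity issues you flag) and produces an explicit quantitative bound---whereas your route handles all $w\in\bar W_h$ in one stroke without the separate orthogonality bookkeeping for linear combinations, at the price of importing more background machinery (Galerkin eigenpair convergence, domain-dependent regularity of continuous eigenfunctions). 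Both arguments invoke Theorem~\ref{th:existence} to dispose of the pre-asymptotic range of $h$.
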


\begin{proof}
First we show that~\eqref{eq:winfsup} is satisfied when $w=w_k$ for a fixed
$k$. We define $\bfvarsigma_k$ as the solution of the following mixed
problem: find $\bfsigma_k\in\Vh$ and $p$ in the space of piecewise constant
functions $\Po$ such that
\[
\left\{
\aligned
&(\bfvarsigma_k,\bftau)+(\div\bftau,p)=0&&\forall\bftau\in\Vh\\
&(\div\bfvarsigma_k,q)=-\lambda_k(w_k,q)&&\forall q\in\Po.
\endaligned
\right.
\]
We then have
\[
\aligned
&\|\bfvarsigma_k\|_{L^2(\Omega)}\le C_\sigma\lambda_k\|w_k\|_{L^2(\Omega)}
=C_\sigma\lambda_k^{1/2}\|\grad w_k\|_{L^2(\Omega)}\\
&\div\bfvarsigma_k=-\lambda_k\projo w_k,
\endaligned
\]
where $\projo$ is the $L^2$ projection onto $\Po$.

By choosing $\bftau=\bfvarsigma_k$ in~\eqref{eq:winfsup} we can conclude
\begin{equation}
\aligned
\sup_{\bftau\in\Vh}\frac{(\bftau,\grad w_k)}{\|\bftau\|_{L^2(\Omega)}}&\ge
\frac{(\bfvarsigma_k,\grad w_k)}{\|\bfvarsigma_k\|_{L^2(\Omega)}}=
-\frac{(\div\bfvarsigma_k,w_k)}{\|\bfvarsigma_k\|_{L^2(\Omega)}}=
\frac{\lambda_k(\projo w_k,w_k)}{\|\bfvarsigma_k\|_{L^2(\Omega)}}\\
&=\frac{\lambda_k(w_k,w_k)-\lambda_k(w_k-\projo w_k,w_k)}
{\|\bfvarsigma_k\|_{L^2(\Omega)}}\\
&=\frac{\|\grad w_k\|^2_{L^2(\Omega)}-\lambda_k(w_k-\projo w_k,w_k-\projo w_k)}
{\|\bfvarsigma_k\|_{L^2(\Omega)}}\\
&\ge\frac{\|\grad w_k\|^2_{L^2(\Omega)}-
\lambda_k\|w_k-\projo w_k\|^2_{L^2(\Omega)}}
{C_\sigma\lambda_k^{1/2}\|\grad w_k\|_{L^2(\Omega)}}\\
&\ge\frac{\|\grad w_k\|^2_{L^2(\Omega)}-
\lambda_k C^2_\Pi h^2\|\grad w_k\|^2_{L^2(\Omega)}}
{C_\sigma\lambda_k^{1/2}\|\grad w_k\|_{L^2(\Omega)}}\\
&=\frac{1-C^2_\Pi h^2\lambda_k}{C_\sigma\lambda_k^{1/2}}
\|\grad w_k\|_{L^2(\Omega)}=\beta_k\|\grad w_k\|_{L^2(\Omega)},
\endaligned
\label{eq:betak}
\end{equation}
where $C_\Pi$ denotes the constant appearing in the approximation property of
$\projo$
\[
\|f-\projo f\|_{L^2(\Omega)}\le C_\Pi h\|\grad f\|_{L^2(\Omega)}.
\]
It follows that $\beta_k$ is bounded below uniformly in $h$ for $k$ fixed and
$h$ small enough.

In order to complete the proof it remains to extend the result to a generic
$w\in\bar W_h$; the restriction of $h$ small enough is removed by comparing
with Theorem~\ref{th:existence}.

We detail how to deal with $w=w_i+w_j$; the generic result follows with
similar arguments by considering a finite linear combination of discrete
eigenfunctions.
We define $\bftau=\bfvarsigma=\bfvarsigma_i+\bfvarsigma_j$
in~\eqref{eq:winfsup}, where $\bfvarsigma_k$ ($k=i,j$) is defined in the
previous step. We have
\begin{equation}
\aligned
\sup_{\bftau\in\Vh}\frac{(\bftau,\grad w)}{\|\bftau\|_{L^2(\Omega)}}&\ge
\frac{(\bfvarsigma,\grad w)}{\|\bfvarsigma\|_{L^2(\Omega)}}=
\frac{(\lambda_i\projo w_i+\lambda_j\projo w_j,w)}
{\|\bfvarsigma\|_{L^2(\Omega)}}\\
&\ge\frac{\|\grad w\|^2_{L^2(\Omega)}-
(\lambda_i(w_i-\projo w_i)+\lambda_j(w_j-\projo w_j),w)}
{C_\sigma\|\lambda_iw_i+\lambda_jw_j\|_{L^2(\Omega)}}.
\endaligned
\label{eq:ortho}
\end{equation}
Let us study separately the numerator and the denominator of the last
expression by starting with the scalar product appearing in the numerator. In
order to make the notation shorted, we denote by $f_k$ ($k=i,j$) the term
$\lambda_k w_k$.
\[
\aligned
(f_i-\projo f_i&+f_j-\projo f_j,w_i+w_j)\\
&= (f_i-\projo f_i,w_i)+(f_j-\projo f_j,w_j)\\
&\quad+
(f_i-\projo f_i,w_j)+(f_j-\projo f_j,w_i)\\
&\le C_\Pi h\left(\|\grad f_i\|_{L^2(\Omega)}\|w_i\|_{L^2(\Omega)}+
\|\grad f_j\|_{L^2(\Omega)}\|w_j\|_{L^2(\Omega)}\right)\\
&\quad+
(f_i-\projo f_i,w_j-\projo w_j)+(f_j-\projo f_j,w_i-\projo w_i)\\
&\le C_\Pi h\left(\max(\lambda_i,\lambda_j)^{1/2}\|\grad
w\|_{L^2(\Omega)}\right)\\
&\quad+
C^2_\Pi h^2
\left(\lambda_i\|\grad w_i\|_{L^2(\Omega)}\|\grad w_j\|_{L^2(\Omega)}
+\lambda_j\|\grad w_j\|_{L^2(\Omega)}\|\grad w_i\|_{L^2(\Omega)}\right)\\
&\le C_\Pi h\left(\max(\lambda_i,\lambda_j)^{1/2}\|\grad
w\|_{L^2(\Omega)}\right)\\
&\quad+
(C^2_\Pi h^2/2)\max(\lambda_i,\lambda_j)\|\grad w\|^2_{L^2(\Omega)},
\endaligned
\]
where we used twice the orthogonality of $\grad w_i$ and $\grad w_j$.
It follows that the numerator in~\eqref{eq:ortho} can be bounded below by a
positive constant times $\|\grad w\|^2_{L^2(\Omega)}$ for $h$ small enough.
For the denominator, we have
\[
\aligned
\|\lambda_iw_i+\lambda_jw_j\|^2_{L^2(\Omega)}&=
\lambda_i^2\|w_i\|^2_{L^2(\Omega)}+\lambda_j^2\|w_j\|^2_{L^2(\Omega)}\\
&=\lambda_i\|\grad w_i\|^2_{L^2(\Omega)}+\lambda_j\|\grad
w_j\|^2_{L^2(\Omega)}\\
&\le\max(\lambda_i,\lambda_j)\|\grad w\|^2_{L^2(\Omega)},
\endaligned
\]
where we used again the orthogonality of $\grad w_i$ and $\grad w_j$ together
with the orthogonality of $w_i$ and $w_j$. It follows that for $h$ small
enough there exists a constant $C$ independent of $h$, but dependent on $i$
and $j$, such that
\[
\frac{(\bfvarsigma,\grad w)}{\|\bfvarsigma\|_{L^2(\Omega)}}\ge C
\|\grad w\|_{L^2(\Omega)}.
\]
\end{proof}

\begin{remark}

The inf-sup constant of the previous theorem depends on the dimension of $\bar
W_h$. In particular, it gives a confirmation that the components of $\Qh$ that
are source of instability are associated with highly oscillating functions.
We now discuss how it is possible to split the solution of
problem~\eqref{eq:dualmixedh} into a stable part and an unstable one in a more
abstract way.

\end{remark}

It can be easily seen that all eigenvalues of~\eqref{eq:svd} are not larger
than $1$, so that we fix a threshold value $\mut$ between $0$ and $1$ and
define an index $\Nht$ (depending on $h$) so that
\[
\aligned
&\mu_i\ge\mut&&\forall i\le\Nht\\
&\mu_i<\mut&&\forall i>\Nht.
\endaligned
\]
We can then introduce the following spaces
\begin{equation}
\aligned
&\Vone=\Span\{\bfsigma_{1,h},\dots,\bfsigma_{\Nht,h}\}\\
&\Vtwo=\Span\{\bfsigma_{\Nht+1,h},\dots,\bfsigma_{N(h),h}\}\\
&\Qone=\Span\{u_{1,h},\dots,u_{\Nht,h}\}\\
&\Qtwo=\Span\{u_{\Nht+1,h},\dots,u_{N(h),h}\}.
\endaligned
\label{eq:splitting}
\end{equation}
It follows that $\Vh=\Vone\oplus\Vtwo\oplus\Vt$ and that
$\Qh=\Qone\oplus\Qtwo$, where $\Vt$ is a remainder space whose dimension is
equal to $\dim\Vh-N(h)\ge0$. This space satisfies the orthogonality
\[
(\bfsigma_{i,h},\bftau)_{L^2(\Omega)}=0\quad\forall i\ \forall\bftau\in\Vt.
\]

\begin{remark}
It turns out that the subspaces $\Vone-\Qone$ provide a stable discretization
of~\eqref{eq:dualmixedh} since the corresponding inf-sup constant is
associated with $(\mut)^{1/2}>0$ which is independent of $h$.
\end{remark}

We can now look at the matrix form of our discrete
problem~\eqref{eq:dualmixedh}
\[
\left(
\begin{matrix}
\sfA&\sfB^\top\\
\sfB&\sfzero
\end{matrix}
\right)
\left(
\begin{matrix}
\sfy\\
\sfx
\end{matrix}
\right)
=
\left(
\begin{matrix}
\sfzero\\
-\sff
\end{matrix}
\right).
\]
The above problem can be written in the following form by using the splitting
of the spaces $\Vh$ and $\Qh$:
\[
\left(
\begin{matrix}
\sfA_{11}&\sfA_{21}^\top&\sfA_{31}^\top&\sfB_{11}^\top&\sfB_{21}^\top\\
\sfA_{21}&\sfA_{22}&\sfA_{32}^\top&\sfB_{12}^\top&\sfB_{22}^\top\\
\sfA_{31}&\sfA_{32}&\sfA_{33}&\sfB_{13}^\top&\sfB_{23}^\top\\
\sfB_{11}&\sfB_{12}&\sfB_{13}&\sfzero&\sfzero\\
\sfB_{21}&\sfB_{22}&\sfB_{23}&\sfzero&\sfzero\\
\end{matrix}
\right)
\left(
\begin{matrix}
\sfy_1\\
\sfy_2\\
\sfy_3\\
\sfx_1\\
\sfx_2
\end{matrix}
\right)
=
\left(
\begin{matrix}
\sfzero\\
\sfzero\\
\sfzero\\
-\sff_1\\
-\sff_2
\end{matrix}
\right).
\]

Some of the matrices involved with this formulation are vanishing. In
particular, it is easy to see that $\sfA_{21}=\sfzero$ due to the
orthogonalities of the $\grad u_i$'s; analogously, $\sfB_{12}=\sfzero$ and
$\sfB_{21}=\sfzero$.
Moreover, due to the orthogonality of $\Vt$ with the rest of $\Vh$, it follows
that $\sfA_{31}=\sfzero$ and $\sfA_{32}=\sfzero$, and also that
$\sfB_{13}=\sfzero$ and $\sfB_{23}=\sfzero$.

Hence the matrix problem is reduced to
\[ 
\left(
\begin{matrix}
\sfA_{11}&\sfzero&\sfzero&\sfB_{11}^\top&\sfzero\\
\sfzero&\sfA_{22}&\sfzero&\sfzero&\sfB_{22}^\top\\
\sfzero&\sfzero&\sfA_{33}&\sfzero&\sfzero\\
\sfB_{11}&\sfzero&\sfzero&\sfzero&\sfzero\\
\sfzero&\sfB_{22}&\sfzero&\sfzero&\sfzero\\
\end{matrix}
\right)
\left(
\begin{matrix}
\sfy_1\\
\sfy_2\\
\sfy_3\\
\sfx_1\\
\sfx_2
\end{matrix}
\right)
=
\left(
\begin{matrix}
\sfzero\\
\sfzero\\
\sfzero\\
-\sff_1\\
-\sff_2
\end{matrix}
\right).
\]

Since $\sfA_{33}$ is invertible, it follows that $\sfy_3=0$ so that the
system can be reduced to the following compact matrix form
\[ 
\left(
\begin{matrix}
\sfA_{11}&\sfzero&\sfB_{11}^\top&\sfzero\\
\sfzero&\sfA_{22}&\sfzero&\sfB_{22}^\top\\
\sfB_{11}&\sfzero&\sfzero&\sfzero\\
\sfzero&\sfB_{22}&\sfzero&\sfzero\\
\end{matrix}
\right)
\left(
\begin{matrix}
\sfy_1\\
\sfy_2\\
\sfx_1\\
\sfx_2
\end{matrix}
\right)
=
\left(
\begin{matrix}
\sfzero\\
\sfzero\\
-\sff_1\\
-\sff_2
\end{matrix}
\right)
\]
where all the non-vanishing blocks are square and diagonal.

It follows that the system decouples into the following two systems:
\[
\left(
\begin{matrix}
\sfA_{11}&\sfB_{11}^\top\\
\sfB_{11}&\sfzero
\end{matrix}
\right)
\left(
\begin{matrix}
\sfy_1\\
\sfx_1
\end{matrix}
\right)
=
\left(
\begin{matrix}
\sfzero\\
-\sff_1
\end{matrix}
\right)
\]
and
\[
\left(
\begin{matrix}
\sfA_{22}&\sfB_{22}^\top\\
\sfB_{22}&\sfzero
\end{matrix}
\right)
\left(
\begin{matrix}
\sfy_2\\
\sfx_2
\end{matrix}
\right)
=      
\left(
\begin{matrix}
\sfzero\\
-\sff_2
\end{matrix}
\right)
\]

This corresponds to the following two variational problems. Find
$\bfsigmaone\in\Vone$ and $\uone\in\Qone$ such that
\begin{equation}
\left\{
\aligned
&(\bfsigmaone,\bftau)+(\bftau,\grad\uone)=0&&\forall\bftau\in\Vone\\
&(\bfsigmaone,\grad v)=-\langle f,v\rangle&&\forall v\in\Qone
\endaligned
\right.
\label{eq:magic1}
\end{equation}
and find $\bfsigmatwo\in\Vtwo$ and $\utwo\in\Qtwo$ such that
\begin{equation}
\left\{
\aligned
&(\bfsigmatwo,\bftau)+(\bftau,\grad\utwo)=0&&\forall\bftau\in\Vtwo\\
&(\bfsigmatwo,\grad v)=-\langle f,v\rangle&&\forall v\in\Qtwo
\endaligned
\right.
\label{eq:magic2}
\end{equation} 

The solution of~\eqref{eq:dualmixedh} is then given simply by
\[
\bfsigma_h=\bfsigmaone+\bfsigmatwo\qquad u_h=\uone+\utwo.
\]

The heuristic idea of the splitting is that~\eqref{eq:magic1} corresponds to a
\emph{stable} problem, where the inf-sup constant is bounded below by
$(\mut)^{1/2}$, while~\eqref{eq:magic2} has an inf-sup constant that goes to
zero as $h$ goes to zero.

We state in the following proposition the results obtained so far.

\begin{proposition}

Let us consider the solution $(\bfsigma_h,u_h)\in\Vh\times\Qh$
of~\eqref{eq:dualmixedh}. Let $\mut$ be a constant between $0$ and $1$
and consider the subspaces $\Vone$, $\Vtwo$, $\Qone$, and $\Qtwo$ introduced
in~\eqref{eq:splitting}. Then it holds
\[
\aligned
\bfsigma_h=\bfsigmaone+\bfsigmatwo\\
u_h=\uone+\utwo,
\endaligned
\]
where $(\bfsigmaone,\uone)\in\Vone\times\Qone$ and
$(\bfsigmatwo,\utwo)\in\Vtwo\times\Qtwo$ solve~\eqref{eq:magic1}
and~\eqref{eq:magic2}, respectively.

\label{pr:splitting}
\end{proposition}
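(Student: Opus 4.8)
The plan is to turn the abstract eigenproblem \eqref{eq:eiginfsup} into an explicit block-diagonalization of the linear system associated with \eqref{eq:dualmixedh}, exactly as sketched in the discussion preceding the statement. First I would expand the two unknowns in the spectral bases: write $u_h=\sum_{i=1}^{N(h)}\xi_i u_{i,h}$ with respect to the family $\{u_{i,h}\}$ normalized so that $\|\grad u_{i,h}\|_{L^2(\Omega)}=1$, and $\bfsigma_h$ with respect to $\{\bfsigma_{i,h}\}$ together with a basis of the remainder space $\Vt$. Grouping the indices $i\le\Nht$ and $i>\Nht$ according to the threshold $\mut$ realizes precisely the splitting $\Vh=\Vone\oplus\Vtwo\oplus\Vt$, $\Qh=\Qone\oplus\Qtwo$ of \eqref{eq:splitting}, so that the matrix of \eqref{eq:dualmixedh} acquires the $5\times5$ block structure displayed above.

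The key step is to check that almost all off-diagonal blocks vanish. The orthogonalities $(\grad u_{i,h},\grad u_{j,h})_{L^2(\Omega)}=0$ for $i\ne j$ give $\sfA_{21}=\sfzero$; the identities $(\bfsigma_{i,h},\grad u_{j,h})_{L^2(\Omega)}=-\mu_i\,\delta_{ij}$, which follow from the second equation of \eqref{eq:eiginfsup} tested against $u_{j,h}$, give $\sfB_{12}=\sfB_{21}=\sfzero$; and the orthogonality $(\bfsigma_{i,h},\bftau)_{L^2(\Omega)}=0$ for all $i$ and all $\bftau\in\Vt$ gives $\sfA_{31}=\sfA_{32}=\sfzero$ and $\sfB_{13}=\sfB_{23}=\sfzero$. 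Since $\sfA_{33}$ is the Gram matrix, for the $L^2$ inner product, of a basis of $\Vt$, it is invertible, and the third block-row of the reduced system forces $\sfy_3=\sfzero$. What remains is the $4\times4$ system whose two decoupled halves are, by construction, the matrix forms of the variational problems \eqref{eq:magic1} on $\Vone\times\Qone$ and \eqref{eq:magic2} on $\Vtwo\times\Qtwo$.

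Finally I would verify that each of these two subproblems is uniquely solvable, so that the decomposition is well defined: \eqref{eq:magic1} is stable with inf-sup constant $(\mut)^{1/2}>0$, independent of $h$, while \eqref{eq:magic2}, though with an inf-sup constant that degenerates as $h\to0$, is still well posed for each fixed $h$ because the eigenvalues $\mu_i$ with $i>\Nht$ are strictly positive (equivalently, invoking Theorem~\ref{th:existence}). Adding the two solutions and using linearity of \eqref{eq:dualmixedh} then yields $(\bfsigma_h,u_h)=(\bfsigmaone+\bfsigmatwo,\uone+\utwo)$, which is the claim.

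I do not expect a genuine obstacle in this argument: it is essentially bookkeeping. The only points that need care are matching each orthogonality relation to the block it annihilates, and noticing that the possible presence of a nontrivial remainder space $\Vt$ (when $\dim\Vh>N(h)$) does not interfere with the decoupling, precisely because the block $\sfA_{33}$ is invertible and hence $\sfy_3$ is forced to vanish.
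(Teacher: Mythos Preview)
Your proposal is correct and follows essentially the same route as the paper. In fact the paper does not give a separate proof after the proposition: the argument is the block-diagonalization discussion immediately preceding it, which you have reproduced faithfully (including the identification of which orthogonality kills which block, the invertibility of $\sfA_{33}$ forcing $\sfy_3=\sfzero$, and the resulting decoupling into \eqref{eq:magic1} and \eqref{eq:magic2}). Your extra remark on the well-posedness of each subproblem via the positivity of the $\mu_i$ is a small but welcome addition that the paper leaves implicit.
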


The characterization of the solution $u_h$ can be pushed further by looking
at the matrices involved with~\eqref{eq:magic1} and~\eqref{eq:magic2} and by
testing the systems with $v=u_{i,h}$. Indeed, it turns out that
$\sfA_{11}=-\sfB_{11}$ and $\sfA_{22}=-\sfB_{22}$ have diagonal entries equal
to the eigenvalues $\mu_i$ if the bases of the spaces are chosen as
in~\eqref{eq:splitting}. The following theorem, which is an immediate
consequence of the previous observations, shows how $u_h$ can be
represented in terms of $f$ and the basis of $\Qh$.

\begin{theorem}

The second component $u_h\in\Qh$ of the solution to~\eqref{eq:dualmixedh} can
be represented as
\begin{equation}
u_h=\sum_{i=1}^{N(h)}\frac{\alpha_i}{\mu_i}u_{i,h}
\label{eq:uh}
\end{equation}
where the coefficients $\alpha_i$ are defined as
\begin{equation}
\alpha_i=\langle f,u_{i,h}\rangle.
\label{eq:alpha}
\end{equation}
\label{th:usol}
\end{theorem}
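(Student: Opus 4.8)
The plan is to exploit the diagonal structure that has already been exposed in the two decoupled systems~\eqref{eq:magic1} and~\eqref{eq:magic2}. The key observation, mentioned just before the statement, is that when the bases $\{\bfsigma_{i,h}\}$ and $\{u_{i,h}\}$ from~\eqref{eq:splitting} are used, the eigenrelations~\eqref{eq:eiginfsup} together with the orthonormalization $(\grad u_{i,h},\grad u_{j,h})=\delta_{ij}$ force the matrices $\sfA_{11}$, $\sfB_{11}$, $\sfA_{22}$, $\sfB_{22}$ to be diagonal with entries tied to the eigenvalues $\mu_i$. So first I would expand the sought solution in the $\{u_{i,h}\}$ basis, writing $u_h=\sum_i c_i u_{i,h}$ with unknown coefficients $c_i$, and similarly expand $\bfsigma_h$ in the $\{\bfsigma_{i,h}\}$ basis (noting that, by the orthogonality of $\Vt$ against all $\bfsigma_{i,h}$ and against the gradients, the $\Vt$-component of $\bfsigma_h$ plays no role and vanishes, exactly as the reduction $\sfy_3=0$ already showed).

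Next I would test the second equation of~\eqref{eq:dualmixedh}, namely $(\bfsigma_h,\grad v)=-\langle f,v\rangle$, against $v=u_{j,h}$. On the left-hand side I substitute the expansion of $\bfsigma_h$ and use the second equation of~\eqref{eq:eiginfsup}, which gives $(\bfsigma_{i,h},\grad u_{j,h})=-\mu_i(\grad u_{i,h},\grad u_{j,h})=-\mu_i\delta_{ij}$. Hence the left-hand side collapses to $-\mu_j$ times the $j$-th expansion coefficient of $\bfsigma_h$. It remains to relate that coefficient to $c_j$: testing the first equation of~\eqref{eq:dualmixedh} (or equivalently using the first line of~\eqref{eq:eiginfsup}) shows that the $\bfsigma$-expansion coefficient equals the corresponding $u$-coefficient, so the $j$-th coefficient of $\bfsigma_h$ is $c_j$. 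Therefore $-\mu_j c_j=-\langle f,u_{j,h}\rangle=-\alpha_j$, i.e. $c_j=\alpha_j/\mu_j$, which is precisely~\eqref{eq:uh} with~\eqref{eq:alpha}. Since every $\mu_j>0$ by Theorem~\ref{th:existence}, the division is legitimate.

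I do not expect a serious obstacle here; the statement is billed as an immediate consequence of the preceding matrix analysis, and the only point requiring a little care is bookkeeping the three bases consistently — making sure that the same index $i$ labels the paired pair $(\bfsigma_{i,h},u_{i,h})$ throughout, and that the coefficient of $\bfsigma_h$ along $\bfsigma_{i,h}$ genuinely coincides with the coefficient of $u_h$ along $u_{i,h}$. That identification is what the first equation of~\eqref{eq:eiginfsup} delivers once one tests it against $\bftau=\bfsigma_{j,h}$ and uses the orthogonality $(\bfsigma_{i,h},\bfsigma_{j,h})=\mu_i\delta_{ij}$ established in the text. Alternatively, one can bypass the $\bfsigma$-expansion entirely and argue purely at the matrix level: the decoupled blocks give $\sfB_{11}\sfy_1=-\sff_1$ and $\sfA_{11}\sfy_1+\sfB_{11}^\top\sfx_1=\sfzero$ with $\sfA_{11}=-\sfB_{11}$ diagonal with entries $\mu_i$, so $\sfx_1=\sfA_{11}^{-1}\sff_1$ componentwise, and likewise for the block $2$ system; reassembling $u_h=\uone+\utwo$ and recalling $\alpha_i=\langle f,u_{i,h}\rangle$ yields~\eqref{eq:uh} directly. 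I would present whichever of these two routes is shorter, probably the matrix one, since all the needed vanishing and diagonality facts are already in hand.
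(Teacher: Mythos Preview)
Your proposal is correct and matches the paper's approach: the paper does not write out a proof but simply declares the representation an ``immediate consequence of the previous observations'' (the diagonality of $\sfA_{11}=-\sfB_{11}$ and $\sfA_{22}=-\sfB_{22}$ with entries $\mu_i$, obtained by testing with $v=u_{i,h}$), and both of your routes---the variational expansion and the block-matrix computation---are exactly the details one fills in to make that immediate.
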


\begin{remark}

It is interesting to observe that, thanks to the orthogonalities of the
$\{u_{i,h}\}$, the coefficients $\{\alpha_i\}$ in Theorem~\ref{th:usol} can be
used also to define the solution $u_G\in\Qh$ of the standard Galerkin method
\[
(\grad u_G,\grad v)=\langle f,v\rangle\quad\forall v\in\Qh
\]
as
\begin{equation}
u_G=\sum_{i=1}^{N(h)}\alpha_iu_{i,h}.
\label{eq:uG}
\end{equation}

It follows that $u_G$ and $u_h$ have similar representations and that the
$i$-th coefficient of their representations differs by a factor equal to
$1/\mu_i$.
\end{remark}

In light of the previous results, it is interesting to see how the
coefficients $\{\alpha_i\}$ behave. To this aim we consider different right
hand sides $f$ in the unit square and compare their behavior.

We start with a smooth $f$ equal to the first eigenfunction of the Poisson
problem, then we take a constant right and side and we conclude with two
approximations of the Dirac delta function: one centered at $(1/3,1/5)$ and
the other centered at the center of the domain. The corresponding results are
reported in Figure~\ref{fg:alpha}.

\begin{figure}
\subcaptionbox*{$f(x,y)=2\pi^2\sin(\pi x)\sin(\pi y)$: \meshcrossed mesh
(left), \meshright mesh (middle), \meshnonstructured mesh (right)}
{
\includegraphics[width=.3\textwidth]{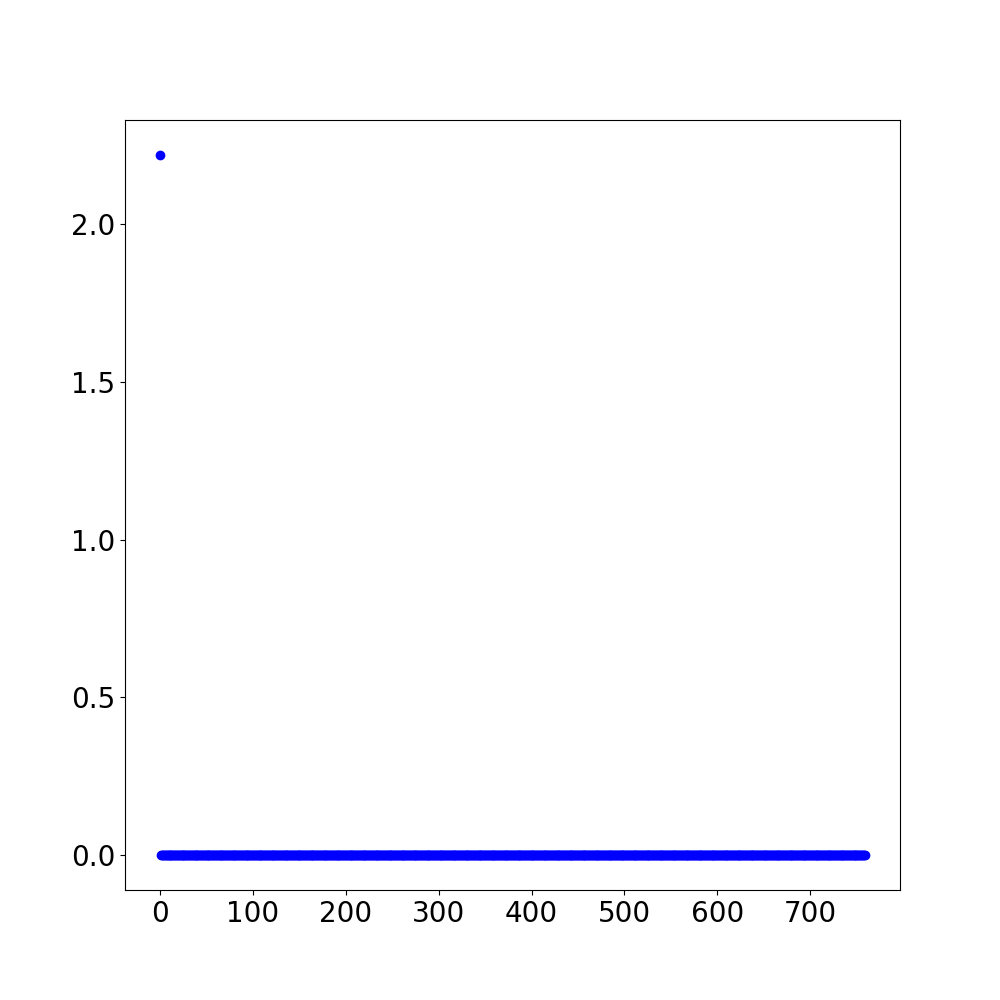}
\includegraphics[width=.3\textwidth]{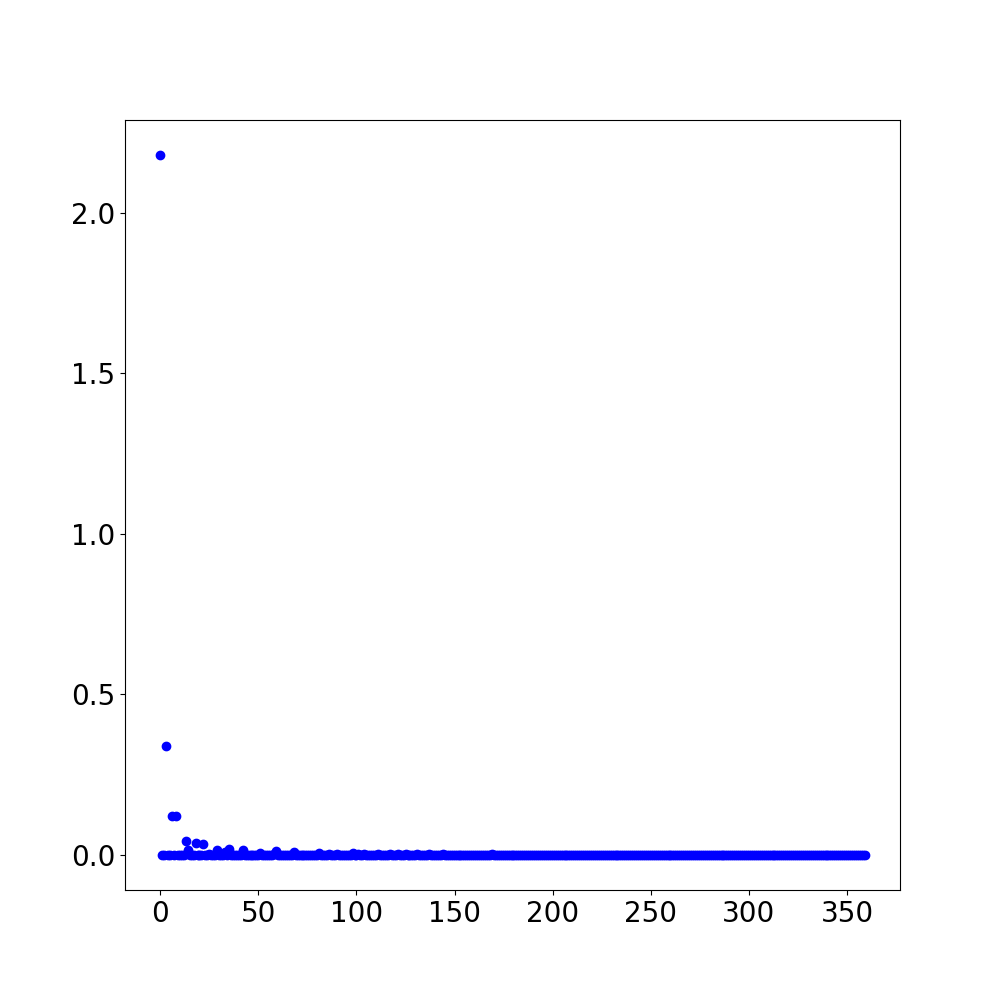}
\includegraphics[width=.3\textwidth]{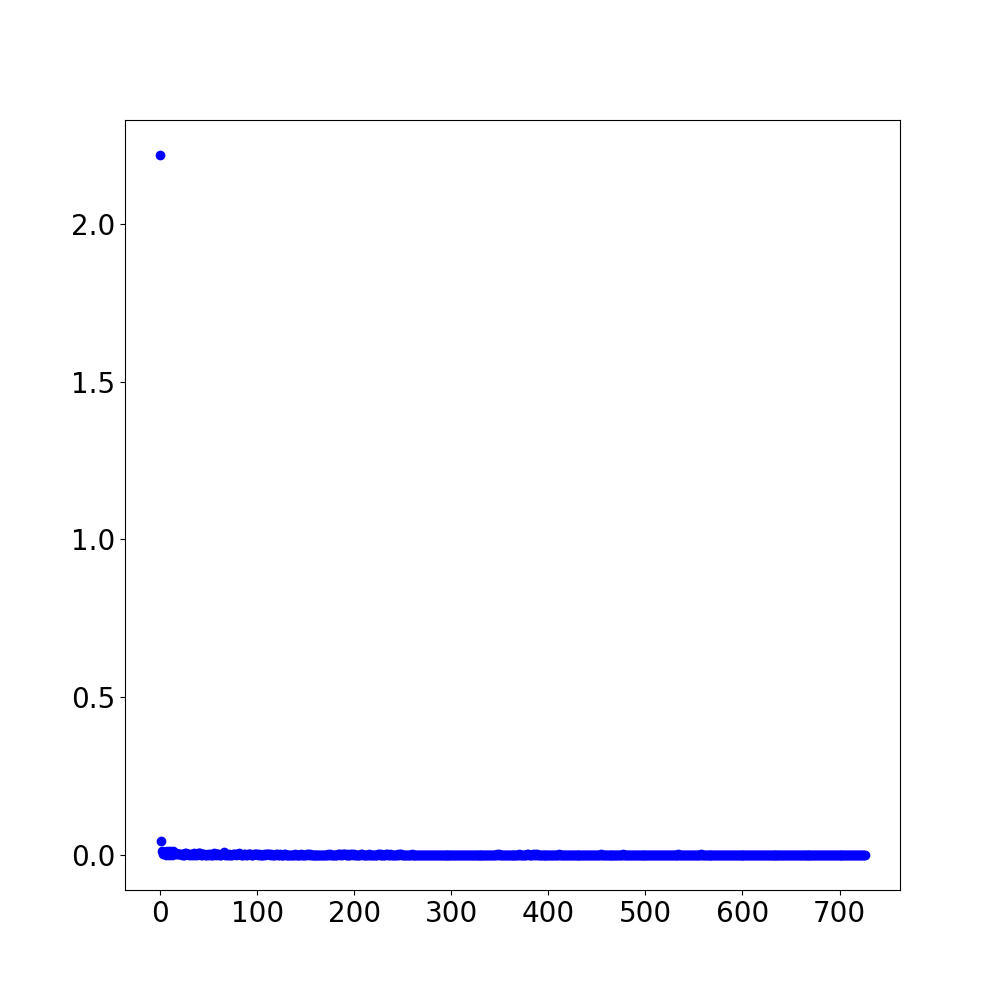}
}

\subcaptionbox*{$f(x,y)=1$: \meshcrossed mesh (left), \meshright mesh
(middle), \meshnonstructured mesh (right)}
{
\includegraphics[width=.3\textwidth]{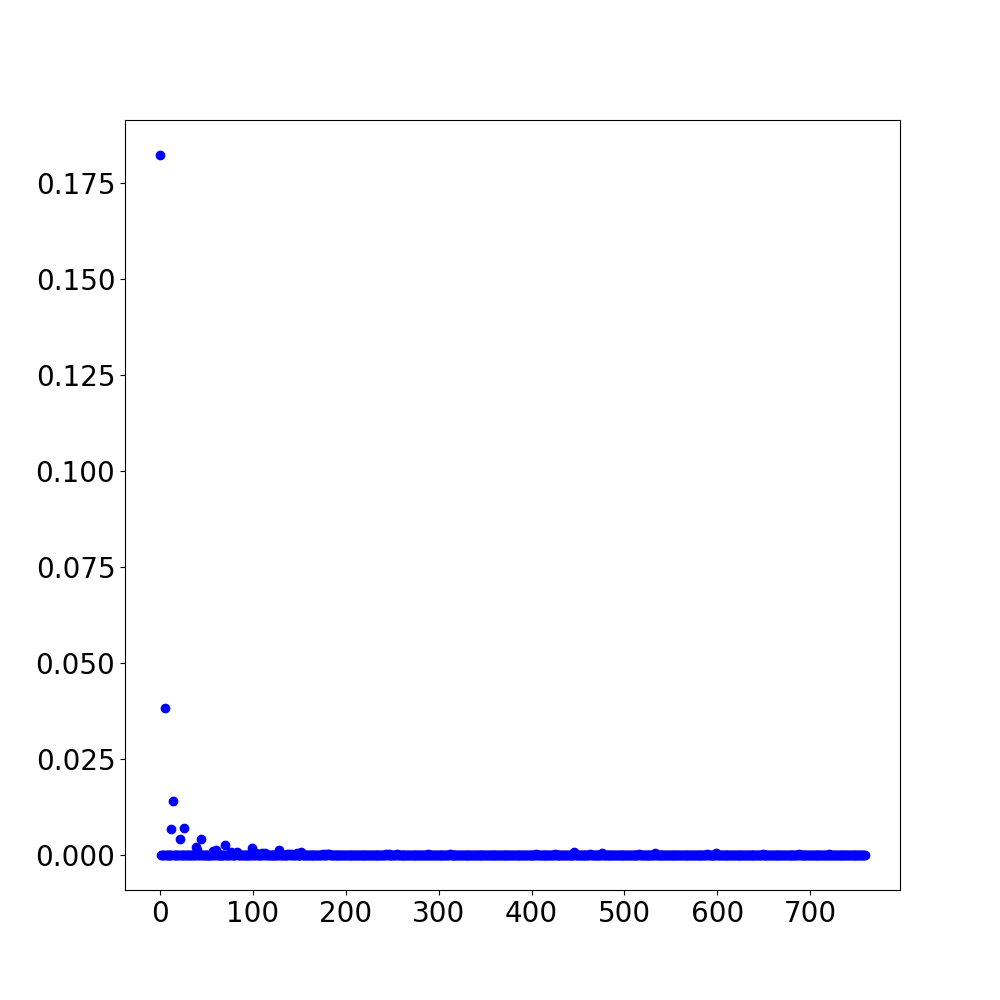}
\includegraphics[width=.3\textwidth]{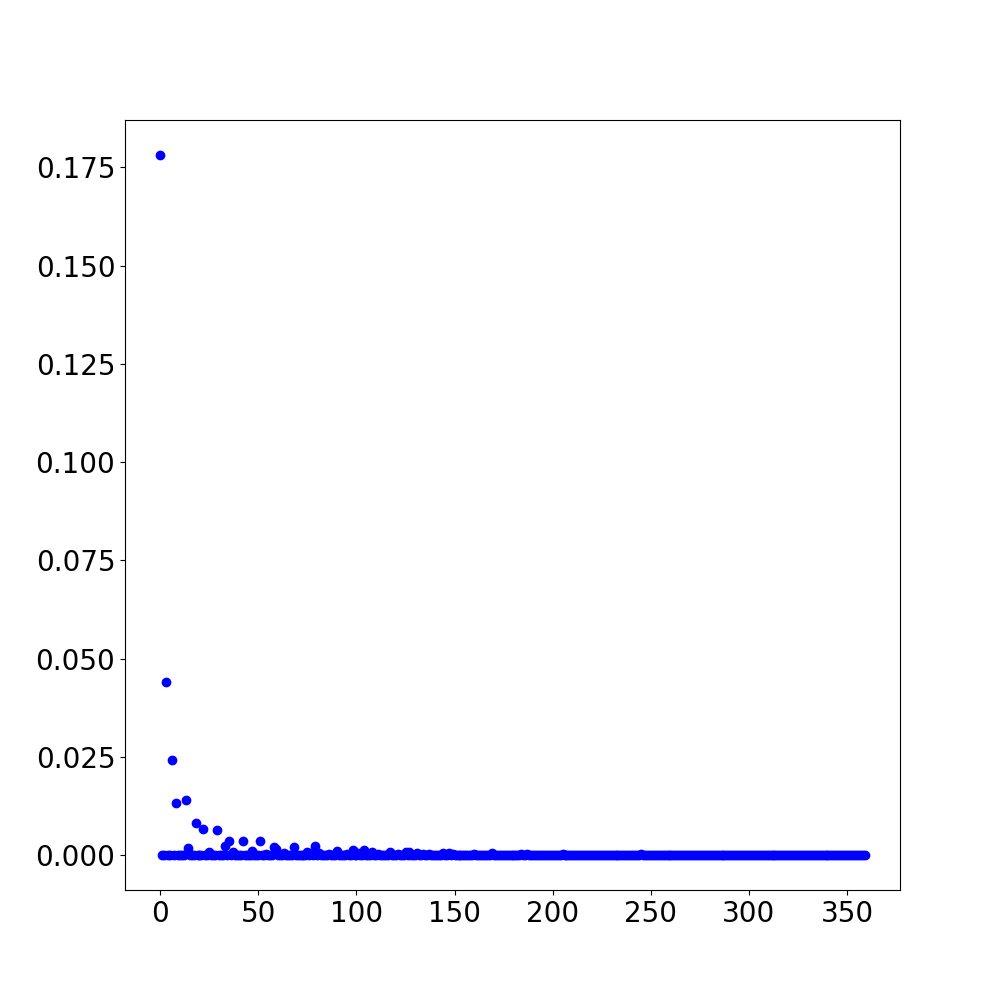}
\includegraphics[width=.3\textwidth]{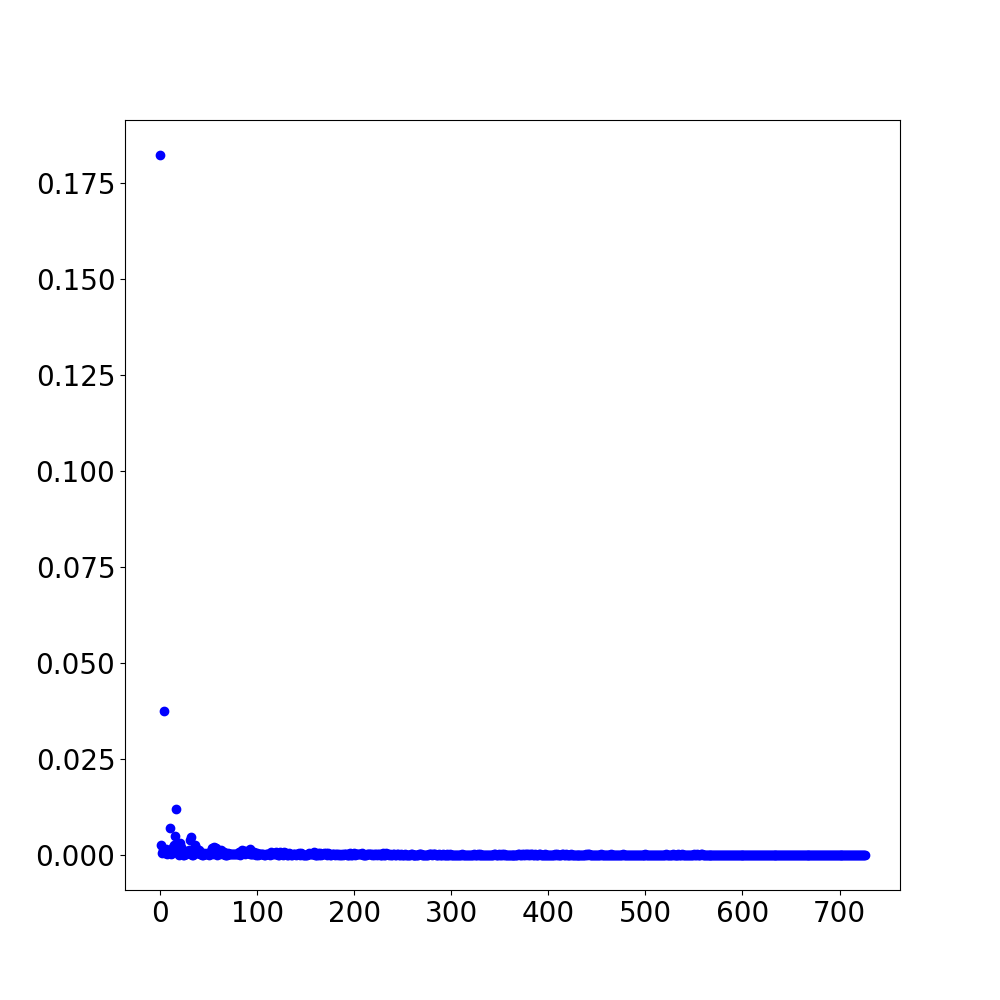}
}

\subcaptionbox*{$f(x,y)$ equal to an approximation to the Dirac delta function
centered at $(1/3,1/5)$: \meshcrossed mesh (left), \meshright mesh (middle),
\meshnonstructured mesh (right)}
{
\includegraphics[width=.3\textwidth]{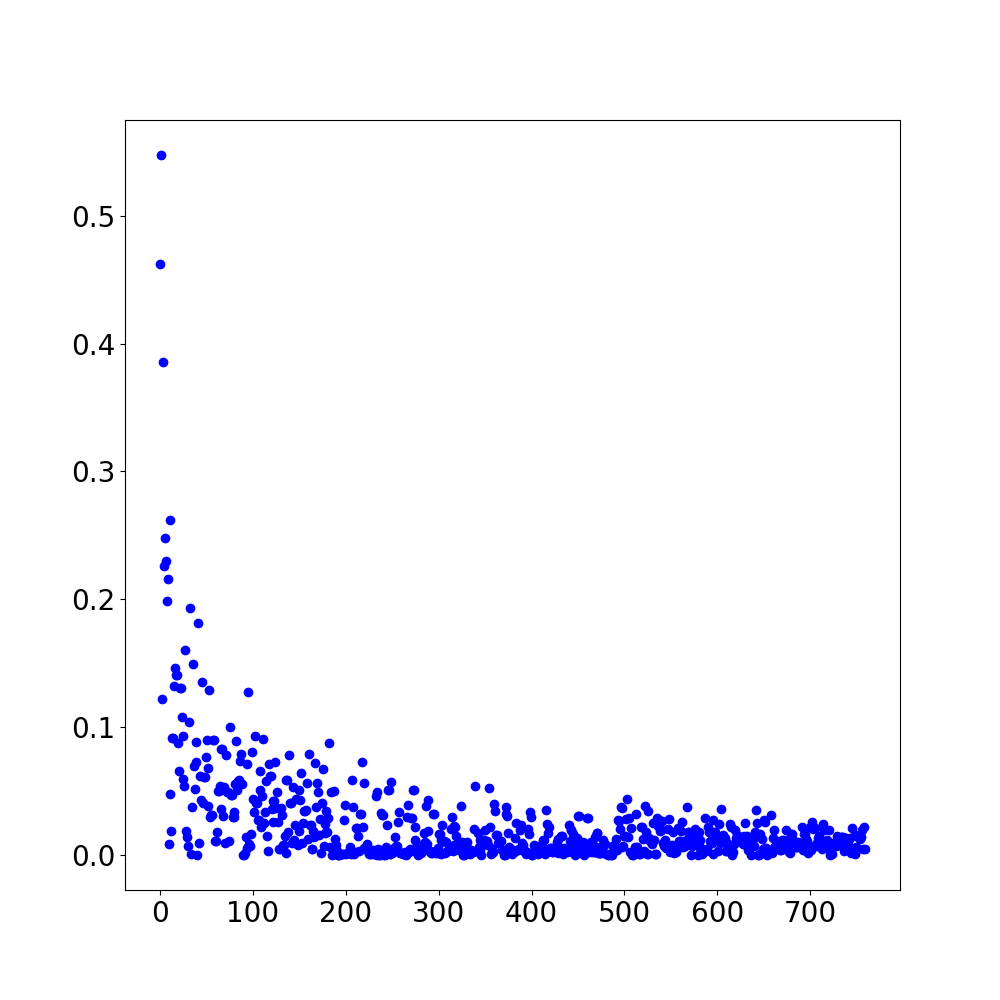}
\includegraphics[width=.3\textwidth]{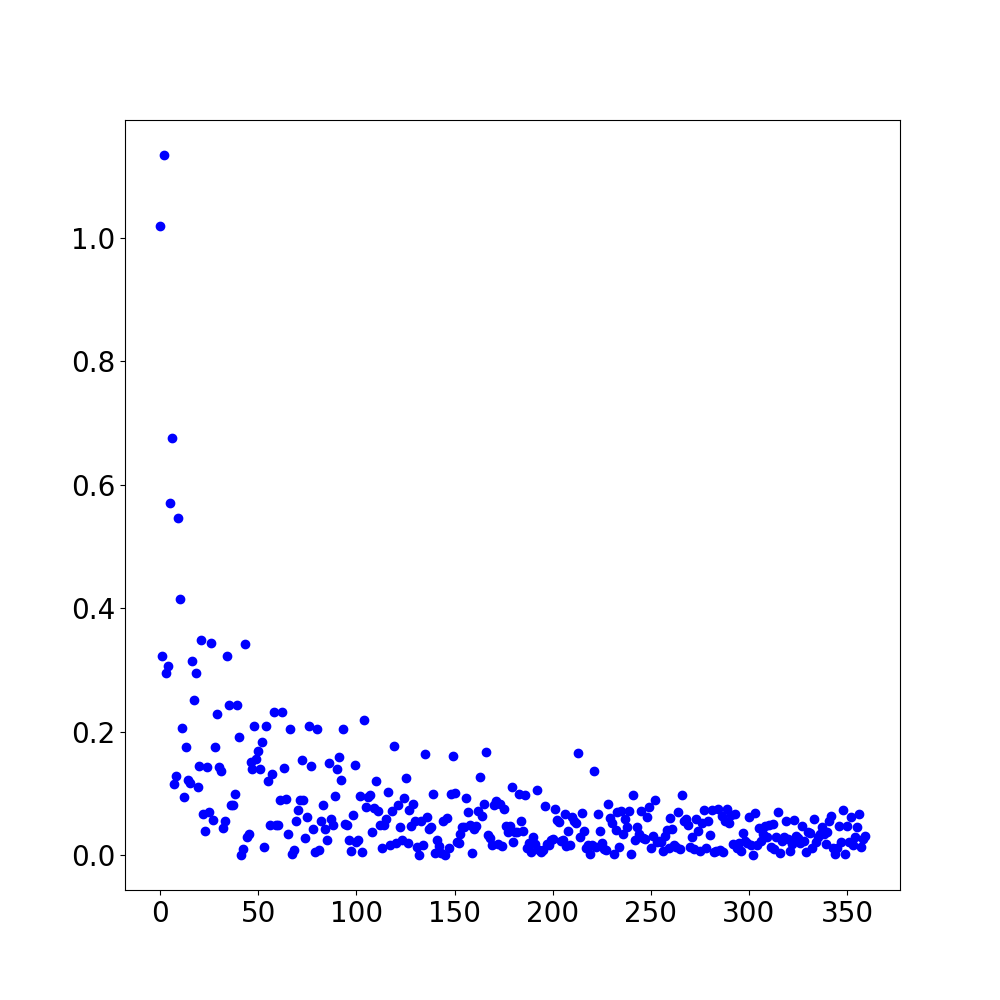}
\includegraphics[width=.3\textwidth]{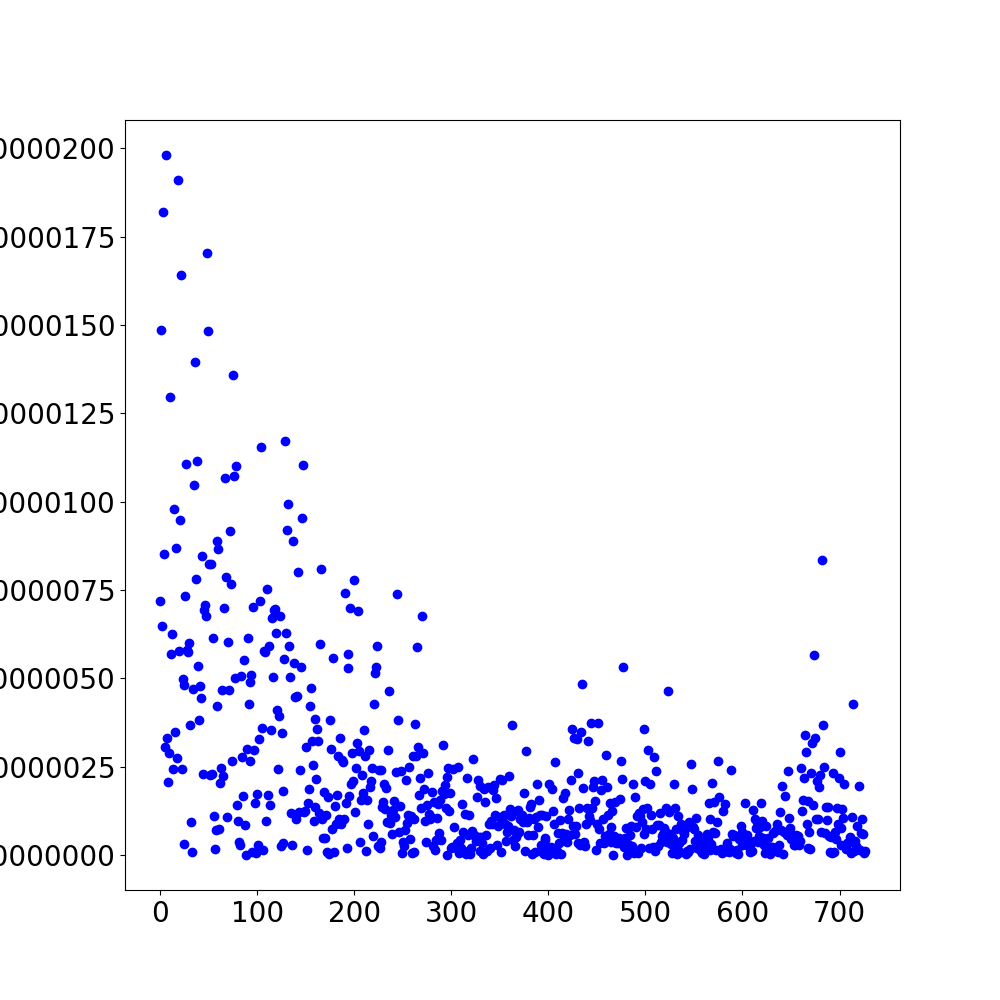}
}

\subcaptionbox*{$f(x,y)$ equal to an approximation to the Dirac delta function
centered at $(1/2,1/2)$: \meshcrossed mesh (left), \meshright mesh (middle),
\meshnonstructured mesh (right)}
{
\includegraphics[width=.3\textwidth]{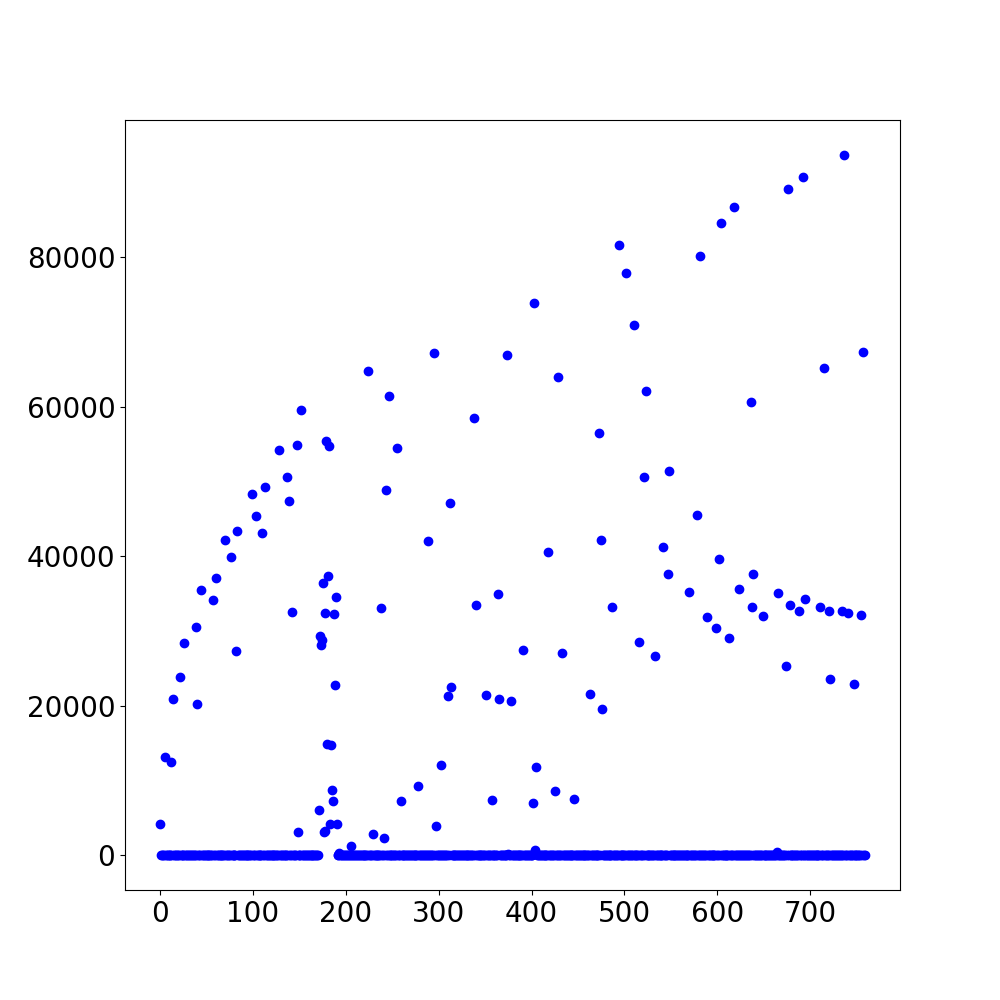}
\includegraphics[width=.3\textwidth]{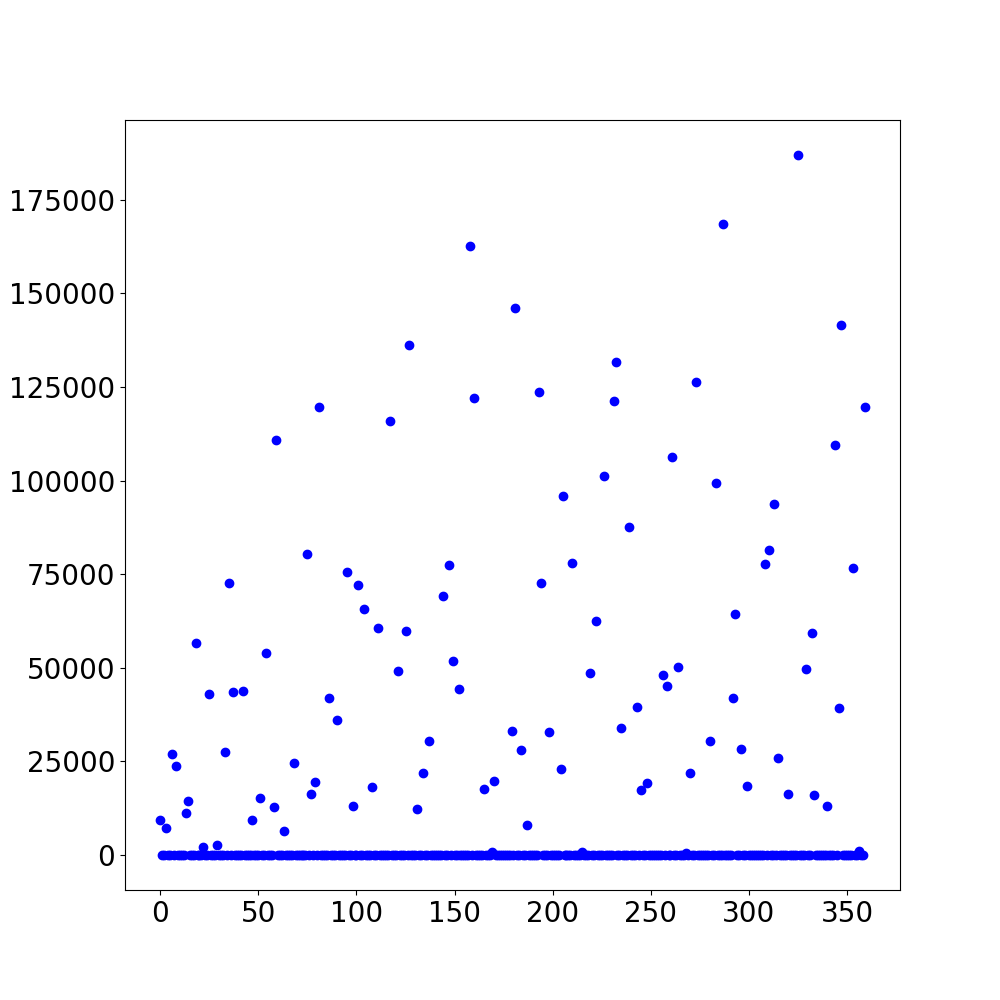}
\includegraphics[width=.3\textwidth]{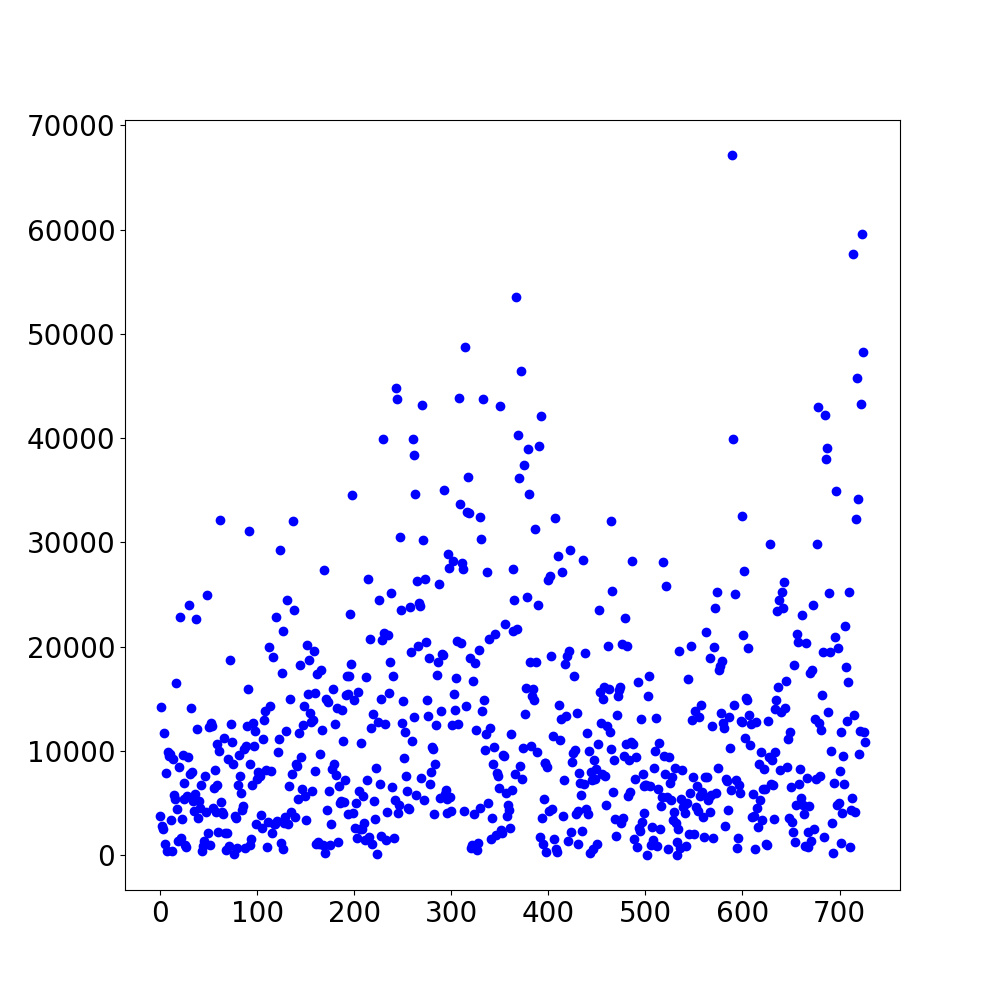}
}

\caption{Coefficients $\{\alpha_i\}$ from~\eqref{eq:alpha} for various choices
of $f$}
\label{fg:alpha}
\end{figure}

As it is natural from the oscillatory behavior of the eigenfunctions, the
coefficients $\alpha_i$ corresponding to smooth functions are different from
zero only for few low values of $i$ and vanish for larger $i$'s, while non
smooth functions contain nonzero coefficients in the right most part of the
spectrum. Moreover, the eigenvalues $\mu_i$ are close to one for small values
of $i$ while become smaller as we move to the right in the spectrum.

Hence, by comparing the two representation of $u_h$ and $U_G$ in~\eqref{eq:uh}
and~\eqref{eq:uG}, it should be expected that their difference is small when
$f$ is smooth and possibly large when $f$ contains large components along the
last part of the spectrum. Indeed in the next section we are going to analyze
the convergence of the scheme when $f$ is smooth enough.

\section{Convergence of $\|\bfsigma-\bfsigma_h\|$ for smooth data}
\label{se:convergence}

Let $\Po$ be the finite element space of piecewise constant functions. We
introduce the following two subspaces of $\Vh$:
\[
\aligned
&Z_h^m(f)=\{\bftau_h\in\Vh:(\div\bftau_h,q_h)=(f,q_h)\ \forall q_h\in\Po\}\\
&Z_h(f)=\{\bftau_h\in\Vh:(\bftau_h,\grad v_h)=-(f,v_h)\ \forall v_h\in\Qh\}.
\endaligned
\]

We have the following crucial result.

\begin{lemma}
If $f$ belongs to $\Po$, then $Z_h^m(f)\subset Z_h(f)$. Moreover, if
$\bfsigma\in\Hdiv$ satisfies $\div\bfsigma=f$, then
\[
\inf_{\bftau_h\in Z_h(f)}\|\bfsigma-\bftau_h\|_{L^2(\Omega)}\le C
\|\bfsigma-\bfsigma^I\|_{L^2(\Omega)},
\]
where $\bfsigma^I\in\Vh$ is the interpolant of $\bfsigma$.
\label{le:crucial}
\end{lemma}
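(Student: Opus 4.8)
The plan is to establish the two assertions separately, starting with the inclusion $Z_h^m(f)\subset Z_h(f)$. Let $\bftau_h\in Z_h^m(f)$, so that $(\div\bftau_h,q_h)=(f,q_h)$ for all $q_h\in\Po$. For $v_h\in\Qh$, integrate by parts (using the zero boundary conditions on $\Qh$ and the fact that $\bftau_h\in\Hdiv$, since $\Vh$ consists of $\Hdiv$-conforming $RT_0$ elements) to get $(\bftau_h,\grad v_h)=-(\div\bftau_h,v_h)$. Now the key observation is that $\div\bftau_h$ is piecewise constant (a standard property of $RT_0$), so $(\div\bftau_h,v_h)=(\div\bftau_h,\projo v_h)$. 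Since $\projo v_h\in\Po$ and $f\in\Po$, the defining property of $Z_h^m(f)$ gives $(\div\bftau_h,\projo v_h)=(f,\projo v_h)=(\projo f,v_h)=(f,v_h)$, where the last equality uses $f\in\Po$. Hence $(\bftau_h,\grad v_h)=-(f,v_h)$ for all $v_h\in\Qh$, i.e. $\bftau_h\in Z_h(f)$. This part is essentially bookkeeping with the $RT_0$ and $\Po$ properties.

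For the approximation estimate, I would first note that $Z_h^m(f)$ is an affine subspace whose associated linear space is $Z_h^m(0)=\{\bftau_h\in\Vh:(\div\bftau_h,q_h)=0\ \forall q_h\in\Po\}$, and likewise $\bfsigma^I\in Z_h^m(f)$ by the commuting diagram property of the $RT_0$ interpolant: $\div\bfsigma^I=\projo\div\bfsigma=\projo f=f$ since $\div\bfsigma=f\in\Po$. Now because $Z_h^m(f)\subset Z_h(f)$, we have
\[
\inf_{\bftau_h\in Z_h(f)}\|\bfsigma-\bftau_h\|_{L^2(\Omega)}
\le\inf_{\bftau_h\in Z_h^m(f)}\|\bfsigma-\bftau_h\|_{L^2(\Omega)}
\le\|\bfsigma-\bfsigma^I\|_{L^2(\Omega)},
\]
simply by choosing $\bftau_h=\bfsigma^I$. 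This already gives the claim with $C=1$; in fact the infimum over the smaller set $Z_h^m(f)$ is attained at the $L^2$-projection of $\bfsigma$ onto that affine space, which is bounded by $\|\bfsigma-\bfsigma^I\|_{L^2(\Omega)}$ as well.

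The main point to be careful about — and the only place where anything could go wrong — is the claim $Z_h^m(f)\subset Z_h(f)$: it relies crucially on $\div\Vh\subset\Po$ (true for $RT_0$) and on the hypothesis $f\in\Po$, and it would fail for higher-order $RT_k$ unless one uses the corresponding space of piecewise polynomials of degree $k$ in the definition of $Z_h^m$. I would make explicit that the integration by parts is legitimate precisely because we are using the $\Hdiv$-conforming space, which is exactly the nonstandard feature of the $RT_0-P_1$ scheme under study; with discontinuous elements $\bftau_h\notin\Hdiv$ and the argument breaks, which is consistent with the paper's overall narrative. No serious obstacle is expected beyond keeping these hypotheses visible.
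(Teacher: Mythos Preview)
Your argument for the inclusion $Z_h^m(f)\subset Z_h(f)$ is correct and essentially identical to the paper's: both rewrite the constraint in $Z_h(f)$ via integration by parts and use that $\div\bftau_h\in\Po$ together with $f\in\Po$.

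For the approximation estimate, your route is genuinely simpler than the paper's. You observe directly that the commuting diagram property $\div\bfsigma^I=\projo\div\bfsigma=\projo f=f$ places $\bfsigma^I$ itself in $Z_h^m(f)$, and then the inclusion $Z_h^m(f)\subset Z_h(f)$ gives the bound immediately with $C=1$. The paper instead invokes the abstract result \cite[Proposition~5.1.3]{bbf}, which uses the inf-sup condition for the pair $\Vh\times\Po$ to bound $\inf_{\bftau_h\in Z_h^m(f)}\|\bfsigma-\bftau_h\|_{\Hdiv}$ by $C\inf_{\bftau_h\in\Vh}\|\bfsigma-\bftau_h\|_{\Hdiv}$, and only afterwards uses the commuting diagram to note that $\div(\bfsigma-\bfsigma^I)=0$ and hence reduce to the $L^2$ norm. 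Since the commuting diagram already tells you that $\bfsigma^I\in Z_h^m(f)$, the detour through the $\Hdiv$ norm and the abstract proposition is unnecessary here; your direct argument is cleaner and yields the sharper constant. The paper's route would be advantageous in situations where one does not have an interpolant lying exactly in the constrained set, but that is not the case in this lemma.
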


\begin{proof}
If $\div\bfsigma$ is in $\Po$, then $\bfsigma$ is smooth enough so that its
interpolant $\bfsigma^I\in\Vh$ is well defined.
In this setting the definition of $Z_h(f)$ reads
\[
Z_h(f)=\{\bftau_h\in\Vh:(\div\bftau_h,\projo v_h)=(f,\projo v_h)\ \forall
v_h\in\Qh\},
\]
where $\projo$ denotes the $L^2(\Omega)$ projection onto $\Po$.
It follows that if $\bftau_h$ belongs to $Z_h^m(f)$ then it is also in
$Z_h(f)$.

From the standard inf-sup condition that is valid for the spaces $\Vh$ and
$\Po$, Proposition~5.1.3 of~\cite{bbf} gives
\[
\inf_{\bftau_h\in Z_h^m(f)}\|\bfsigma-\bftau_h\|_{\Hdiv}\le C
\inf_{\bftau_h\in\Vh}\|\bfsigma-\bftau_h\|_{\Hdiv},
\]
so that the inclusion $Z_h^m(f)\subset Z_h(f)$ gives, in particular,
\[
\inf_{\bftau_h\in Z_h(f)}\|\bfsigma-\bftau_h\|_{L^2(\Omega)}\le C
\inf_{\bftau_h\in\Vh}\|\bfsigma-\bftau_h\|_{\Hdiv}.
\]
From the commuting diagram property $\div\bfsigma^I=\projo\div\bfsigma$,
observing that $\div\bfsigma$ belongs to $\Po$, we have that
$\div(\bfsigma-\bfsigma^I)=0$, so that we can conclude that the right
hand side can be estimated by $\|\bfsigma-\bfsigma^I\|_{L^2(\Omega)}$.
\end{proof}

The following theorem shows the convergence of the approximation of
$\bfsigma$ given by~\eqref{eq:dualmixedh} in the case when $f$ belongs to
$\Po$.

\begin{theorem}

Let $\bfsigma\in\Hdiv$ be the first component of the solution
of~\eqref{eq:dualmixed} for $f\in\Po$ and $\bfsigma_h$ the corresponding
approximation given by~\eqref{eq:dualmixedh}. Then we have the estimate
\[
\|\bfsigma-\bfsigma_h\|_{L^2(\Omega)}\le
C\left(\|\bfsigma-\bfsigma^I\|_{L^2(\Omega)}+
\inf_{v_h\in\Qh}\|\grad(u-v_h)\|_{L^2(\Omega)}\right),
\]
where $\bfsigma^I\in\Vh$ is the interpolant of $\bfsigma$.
\end{theorem}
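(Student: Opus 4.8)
The plan is a quasi-optimality (C\'ea-type) argument carried out on the affine manifold $Z_h(f)$. The key observation is that the interpolant $\bfsigma^I\in\Vh$ belongs to $Z_h(f)$: since $f\in\Po$, the commuting diagram property gives $\div\bfsigma^I=\projo\div\bfsigma=\projo f=f$, hence $\bfsigma^I\in Z_h^m(f)$, and Lemma~\ref{le:crucial} then yields $\bfsigma^I\in Z_h(f)$. On the other hand, the second equation of~\eqref{eq:dualmixedh} states exactly that $\bfsigma_h\in Z_h(f)$ (note $\langle f,v\rangle=(f,v)$ since $f\in\Po$). Consequently $\boldsymbol{e}_h:=\bfsigma_h-\bfsigma^I$ lies in $Z_h(0)=\{\bftau_h\in\Vh:(\bftau_h,\grad v_h)=0\ \forall v_h\in\Qh\}$, and by the triangle inequality $\|\bfsigma-\bfsigma_h\|_{L^2(\Omega)}\le\|\bfsigma-\bfsigma^I\|_{L^2(\Omega)}+\|\boldsymbol{e}_h\|_{L^2(\Omega)}$, so it suffices to estimate $\|\boldsymbol{e}_h\|_{L^2(\Omega)}$.

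For this I would use the first equation of~\eqref{eq:dualmixedh}. Testing it with $\bftau=\boldsymbol{e}_h\in\Vh$ gives $(\bfsigma_h,\boldsymbol{e}_h)+(\boldsymbol{e}_h,\grad u_h)=0$, and since $\boldsymbol{e}_h\in Z_h(0)$ and $u_h\in\Qh$ the second term vanishes, so $(\bfsigma_h,\boldsymbol{e}_h)=0$. Therefore
\[
\|\boldsymbol{e}_h\|^2_{L^2(\Omega)}=(\bfsigma_h-\bfsigma^I,\boldsymbol{e}_h)=-(\bfsigma^I,\boldsymbol{e}_h)=(\bfsigma-\bfsigma^I,\boldsymbol{e}_h)-(\bfsigma,\boldsymbol{e}_h).
\]

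It remains to handle $-(\bfsigma,\boldsymbol{e}_h)$. The first equation of the continuous problem~\eqref{eq:dualmixed} forces $\bfsigma=-\grad u$, and, since $\boldsymbol{e}_h\in Z_h(0)$, we have $(\boldsymbol{e}_h,\grad v_h)=0$ for every $v_h\in\Qh$; hence $-(\bfsigma,\boldsymbol{e}_h)=(\grad u,\boldsymbol{e}_h)=(\grad(u-v_h),\boldsymbol{e}_h)$ for an arbitrary $v_h\in\Qh$. Inserting this into the identity above and applying Cauchy--Schwarz,
\[
\|\boldsymbol{e}_h\|_{L^2(\Omega)}\le\|\bfsigma-\bfsigma^I\|_{L^2(\Omega)}+\|\grad(u-v_h)\|_{L^2(\Omega)},
\]
and taking the infimum over $v_h\in\Qh$, then combining with the triangle inequality of the first paragraph, gives the claimed estimate with $C=2$.

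The computation is short once Lemma~\ref{le:crucial} is in hand, so the only genuine obstacle is that lemma itself: namely, that the equilibrium-type condition $\bfsigma^I\in Z_h^m(f)$ (equivalently $\div\bfsigma^I=f$, which requires $f\in\Po$) implies $\bfsigma^I\in Z_h(f)$, together with the fact that $\bfsigma^I$ is well defined under the regularity available when $\div\bfsigma\in\Po$. Everything else is the orthogonality $(\bfsigma_h,\boldsymbol{e}_h)=0$ coming from the first discrete equation plus routine manipulations; notably, no uniform discrete inf-sup bound for $b(\cdot,\cdot)$ enters the estimate, which is precisely the point.
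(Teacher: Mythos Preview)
Your proof is correct and follows essentially the same mechanism as the paper's: both exploit that $\bfsigma_h$ and a suitable comparison element lie in $Z_h(f)$, so their difference is in $Z_h(0)$, and then use the first discrete equation together with $\bfsigma=-\grad u$ to replace $u_h$ by an arbitrary $v_h\in\Qh$. The only cosmetic difference is that the paper works with a generic $\bftau_h\in Z_h(f)$ and then invokes the approximation bound in Lemma~\ref{le:crucial}, whereas you pick $\bftau_h=\bfsigma^I$ from the outset (using the commuting diagram and the inclusion $Z_h^m(f)\subset Z_h(f)$), which bypasses the infimum step and yields the explicit constant $C=2$.
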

\begin{proof}
We estimate $\|\bfsigma_h-\bftau_h\|_{L^2(\Omega)}$ for $\bftau_h\in Z_h(f)$.
The result will then follow from the triangular inequality and
Lemma~\ref{le:crucial}.

From the error equation and the properties of $Z_h(f)$ we have for all
$v_h\in\Qh$
\[
\aligned
\|\bfsigma_h-\bftau_h\|_{L^2(\Omega)}^2&=(\bfsigma_h-\bfsigma,\bfsigma_h-\bftau_h)+
(\bfsigma-\bftau_h,\bfsigma_h-\bftau_h)\\
&=(\bfsigma_h-\bftau_h,\grad(u-u_h))+(\bfsigma-\bftau_h,\bfsigma_h-\bftau_h)\\
&=(\bfsigma_h-\bftau_h,\grad(u-v_h))+(\bfsigma-\bftau_h,\bfsigma_h-\bftau_h)\\
&\le\|\bfsigma_h-\bftau_h\|_{L^2(\Omega)}\left(\|\grad(u-v_h)\|_{L^2(\Omega)}+
\|\bfsigma-\bftau_h\|_{L^2(\Omega)}\right),
\endaligned
\]
from which we obtain the required estimate.
\end{proof}

\begin{corollary}

If $f\in H^1(\Omega)$ and if the Poisson problem has $H^{1+s}(\Omega)$
regularity for some $s\in(0,1]$, then the following optimal error estimate
holds true
\[
\|\bfsigma-\bfsigma_h\|_{L^2(\Omega)}\le C h^s\|f\|_{H^1(\Omega)}.
\]

\end{corollary}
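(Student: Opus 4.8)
\section*{Proof proposal (plan)}

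The plan is to reduce everything to the situation of the previous theorem by splitting the datum as $f=\projo f+(f-\projo f)$ and exploiting linearity. Since both~\eqref{eq:dualmixed} and~\eqref{eq:dualmixedh} depend linearly on the right-hand side, I would write $\bfsigma=\bfsigma^{(1)}+\bfsigma^{(2)}$ and $\bfsigma_h=\bfsigma^{(1)}_h+\bfsigma^{(2)}_h$, where the superscript $(1)$ denotes the solution corresponding to the datum $\projo f\in\Po$ and the superscript $(2)$ the one corresponding to $f-\projo f$, and similarly for the primal components $u^{(j)},u^{(j)}_h$. Then $\|\bfsigma-\bfsigma_h\|_{L^2(\Omega)}$ is bounded by the sum of the two contributions, which I would estimate separately.

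For the first contribution, $\projo f$ belongs to $\Po$, so the previous theorem applies directly and gives
\[
\|\bfsigma^{(1)}-\bfsigma^{(1)}_h\|_{L^2(\Omega)}\le C\Bigl(\|\bfsigma^{(1)}-(\bfsigma^{(1)})^I\|_{L^2(\Omega)}+\inf_{v_h\in\Qh}\|\grad(u^{(1)}-v_h)\|_{L^2(\Omega)}\Bigr),
\]
where $(\bfsigma^{(1)})^I\in\Vh$ is the interpolant of $\bfsigma^{(1)}$. I would then invoke the $H^{1+s}$ regularity of the Poisson problem, applied to the datum $\projo f$, to obtain $\|u^{(1)}\|_{H^{1+s}(\Omega)}\le C\|\projo f\|_{H^{s-1}(\Omega)}\le C\|\projo f\|_{L^2(\Omega)}\le C\|f\|_{H^1(\Omega)}$, using $s\le1$ to embed $L^2(\Omega)$ into $H^{s-1}(\Omega)$. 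Since $\bfsigma^{(1)}=-\grad u^{(1)}$ has $\div\bfsigma^{(1)}=\projo f\in\Po$ (so the interpolant is well defined, exactly as in the proof of Lemma~\ref{le:crucial}), routine approximation estimates for the Raviart--Thomas interpolant and for continuous piecewise linears bound both terms on the right by $Ch^s\|u^{(1)}\|_{H^{1+s}(\Omega)}\le Ch^s\|f\|_{H^1(\Omega)}$.

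For the second contribution I would only use that $f-\projo f$ is small, namely $\|f-\projo f\|_{L^2(\Omega)}\le Ch\|f\|_{H^1(\Omega)}$. At the continuous level $\bfsigma^{(2)}=-\grad u^{(2)}$ with $-\Delta u^{(2)}=f-\projo f$, hence $\|\bfsigma^{(2)}\|_{L^2(\Omega)}=\|\grad u^{(2)}\|_{L^2(\Omega)}\le C\|f-\projo f\|_{L^2(\Omega)}\le Ch\|f\|_{H^1(\Omega)}$. At the discrete level, testing the first equation of~\eqref{eq:dualmixedh} (with datum $f-\projo f$) by $\bfsigma^{(2)}_h$ and the second by $u^{(2)}_h$ and combining gives $\|\bfsigma^{(2)}_h\|^2_{L^2(\Omega)}=(f-\projo f,u^{(2)}_h)=(f-\projo f,u^{(2)}_h-\projo u^{(2)}_h)$; the approximation property of $\projo$ then yields $\|\bfsigma^{(2)}_h\|^2_{L^2(\Omega)}\le Ch^2\|f\|_{H^1(\Omega)}\,\|\grad u^{(2)}_h\|_{L^2(\Omega)}$. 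Moreover the first equation of~\eqref{eq:dualmixedh} shows that $\sup_{\bftau\in\Vh}(\bftau,\grad u^{(2)}_h)/\|\bftau\|_{L^2(\Omega)}=\|\bfsigma^{(2)}_h\|_{L^2(\Omega)}$, so the inf-sup condition~\eqref{eq:betah} gives $\|\bfsigma^{(2)}_h\|_{L^2(\Omega)}\ge\beta_h\|u^{(2)}_h\|_{H^1(\Omega)}\ge c\,\beta_h\|\grad u^{(2)}_h\|_{L^2(\Omega)}$, and therefore $\|\bfsigma^{(2)}_h\|_{L^2(\Omega)}\le C\beta_h^{-1}h^2\|f\|_{H^1(\Omega)}$.

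Putting the two pieces together yields $\|\bfsigma-\bfsigma_h\|_{L^2(\Omega)}\le C\bigl(h^s+\beta_h^{-1}h^2\bigr)\|f\|_{H^1(\Omega)}$, so the whole argument hinges on a quantitative lower bound of the form $\beta_h\ge c\,h$: with it, $\beta_h^{-1}h^2\le c^{-1}h\le c^{-1}h^s$ for $h\le1$ and $s\le1$, and the claimed estimate follows. This inf-sup lower bound is the hard part, and it is \emph{not} contained in what has been proved so far (Theorem~\ref{th:existence} only gives $\beta_h>0$); I would supply it from the finer study of the inf-sup constant — the numerical evidence above strongly suggests $\beta_h=O(h)$ on the meshes considered, and I expect the matching lower bound to be established in Appendix~\ref{ap:fleur}. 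Everything else (the splitting, the elliptic regularity, the interpolation bookkeeping) is routine.
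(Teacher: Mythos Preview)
Your overall strategy — splitting $f=\projo f+(f-\projo f)$, applying the previous theorem to the piece in $\Po$, and treating the remainder by the approximation estimate $\|f-\projo f\|_{L^2(\Omega)}\le Ch\|\grad f\|_{L^2(\Omega)}$ — is exactly the route the paper indicates. You are also more careful than the paper in spelling out that the splitting produces a \emph{discrete} perturbation term $\bfsigma^{(2)}_h$ that must be controlled, and your identity $\|\bfsigma^{(2)}_h\|_{L^2(\Omega)}^2=(f-\projo f,u^{(2)}_h-\projo u^{(2)}_h)$ is correct and sharp.

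The genuine gap is the last step: you close the argument via $\|\bfsigma^{(2)}_h\|_{L^2(\Omega)}\le C\beta_h^{-1}h^2\|f\|_{H^1(\Omega)}$ and then \emph{assume} $\beta_h\ge c\,h$. This lower bound is nowhere proved in the paper. Theorem~\ref{th:existence} only gives $\beta_h>0$; the numerical tables show $\beta_h=O(h)$ on structured meshes (an \emph{upper} bound, not the lower bound you need); and Appendix~\ref{ap:fleur} does \emph{not} deliver $\beta_h\ge c\,h$ either. What Theorem~\ref{th:fleur} provides is a Fortin operator on the proper subspace $\Wone\subsetneq\Qh$, with a constant $C_\Pi\le Ch\rho(h)/\zeta(h)$ that still depends on the degenerating $P_0$--$P_1$ inf-sup constant $\zeta(h)$; the last paragraph of that appendix explicitly says that these considerations ``do not provide a rigorous proof''. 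Since in general $u^{(2)}_h\notin\Wone$, you cannot invoke that restricted inf-sup bound for your purpose.

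In short: your plan coincides with the paper's one-line proof, but where the paper is silent you have (correctly) uncovered the delicate point, and your proposed resolution relies on an inequality $\beta_h\ge c\,h$ that is not available. Without an independent, $\beta_h$--free bound on $\|\bfsigma_h-\tilde\bfsigma_h\|_{L^2(\Omega)}$ (equivalently, on the minimal-norm element of $Z_h(f-\projo f)$), the argument does not close.
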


\begin{proof}

The result follows by approximating $f$ with $\projo f$ and using the previous
theorem together with the standard estimate
\[
\|f-\projo f\|_{L^2(\Omega)}\le Ch\|\grad f\|_{L^2(\Omega)}.
\]

\end{proof}

\section{Numerical experiments}
\label{se:num}

In this section we report some numerical results related to the solution of
problem~\eqref{eq:dualmixedh}.
As investigated in Section~\ref{se:splitting} for the variable $u$ and proved
in Section~\ref{se:convergence} for the variable $\bfsigma$, we are expecting
the solution to be convergent when the right hand side $f$ is smooth enough.

\begin{figure}

\subcaptionbox*{$N=20$}
{
\includegraphics[width=5cm]{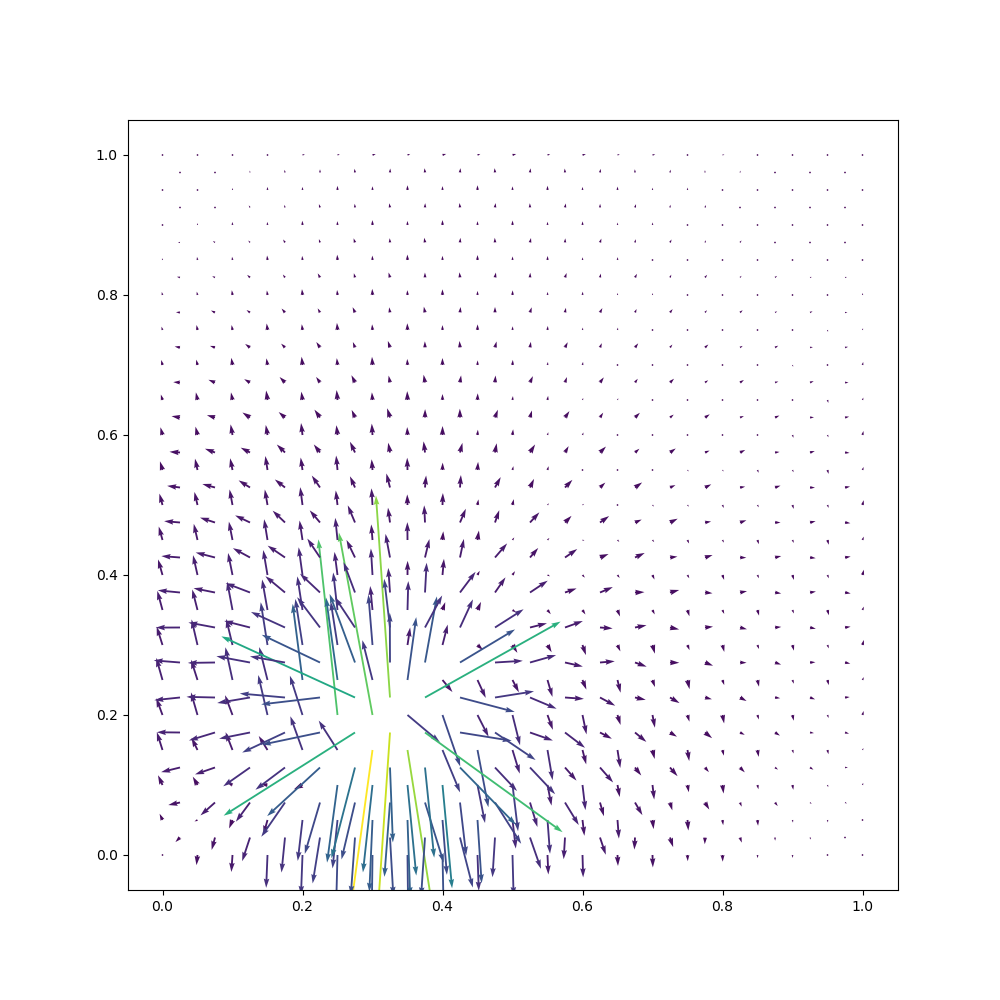}
\includegraphics[width=5cm]{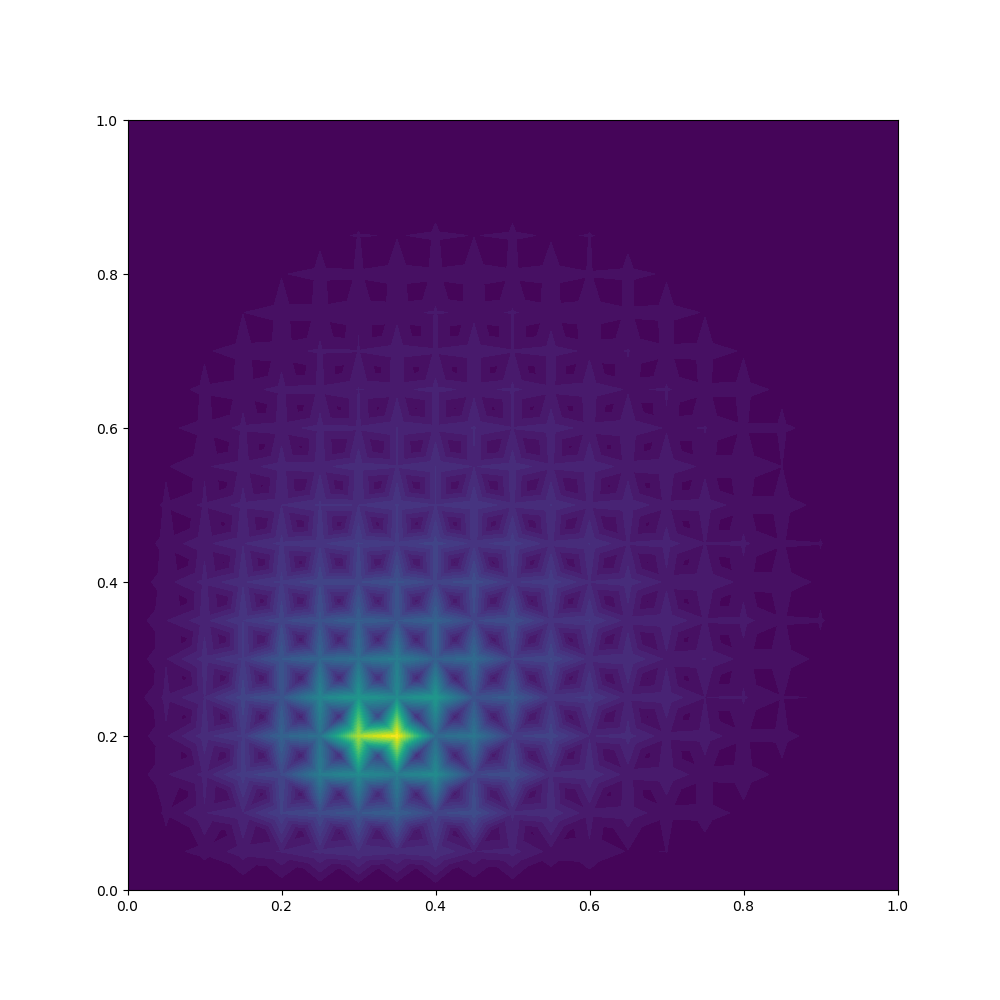}
}

\subcaptionbox*{$N=40$}
{
\includegraphics[width=5cm]{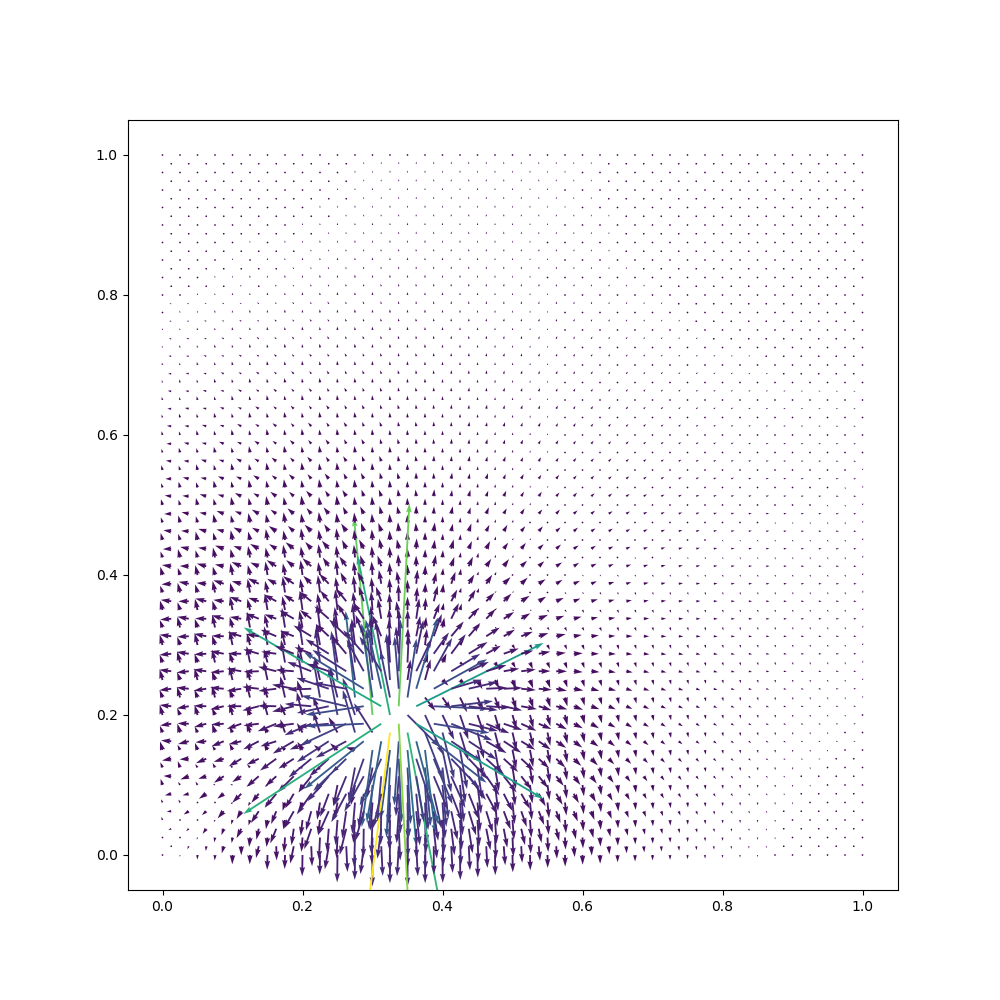}
\includegraphics[width=5cm]{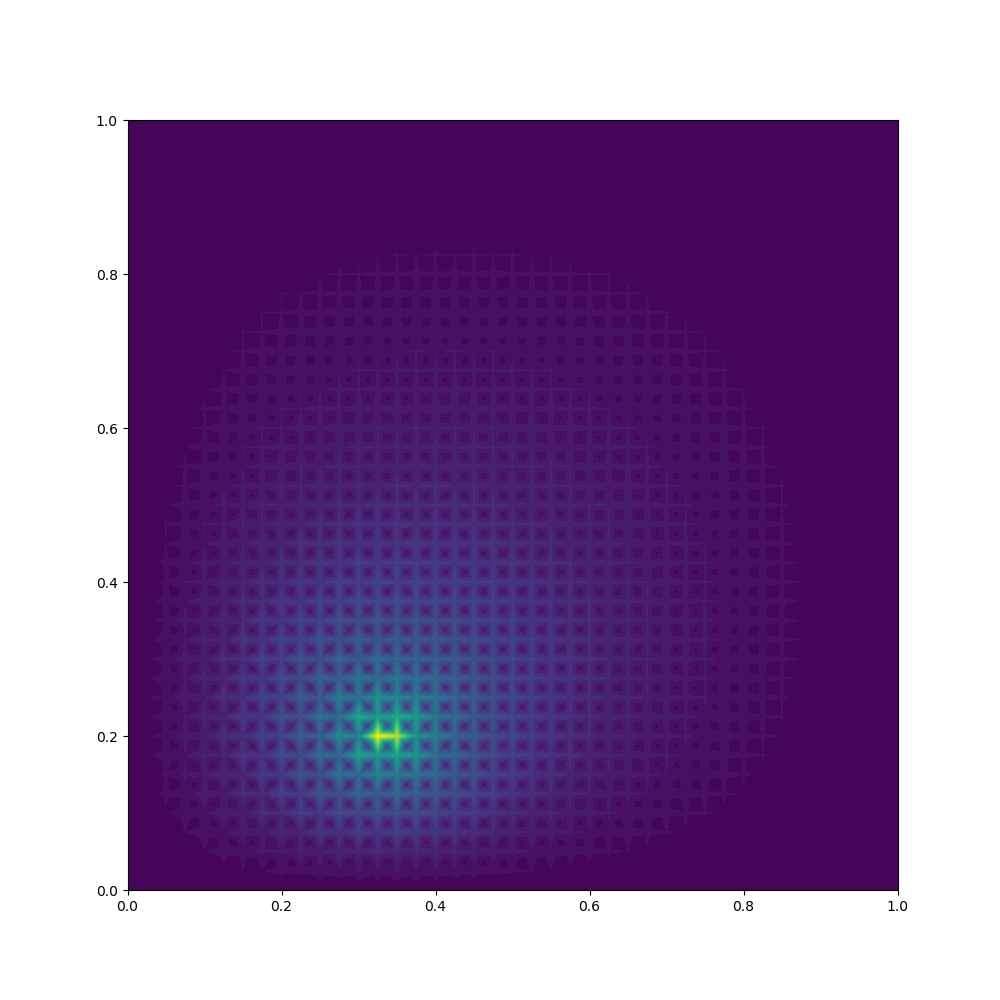}
}

\caption{Solution corresponding to $f$ equal to the approximation of the Dirac
delta function centered at $(1/3,1/5)$ on a coarser and a finer \meshcrossed
mesh}
\label{fg:deltacross}
\end{figure}

\begin{figure}

\subcaptionbox*{$N=20$}
{
\includegraphics[width=5cm]{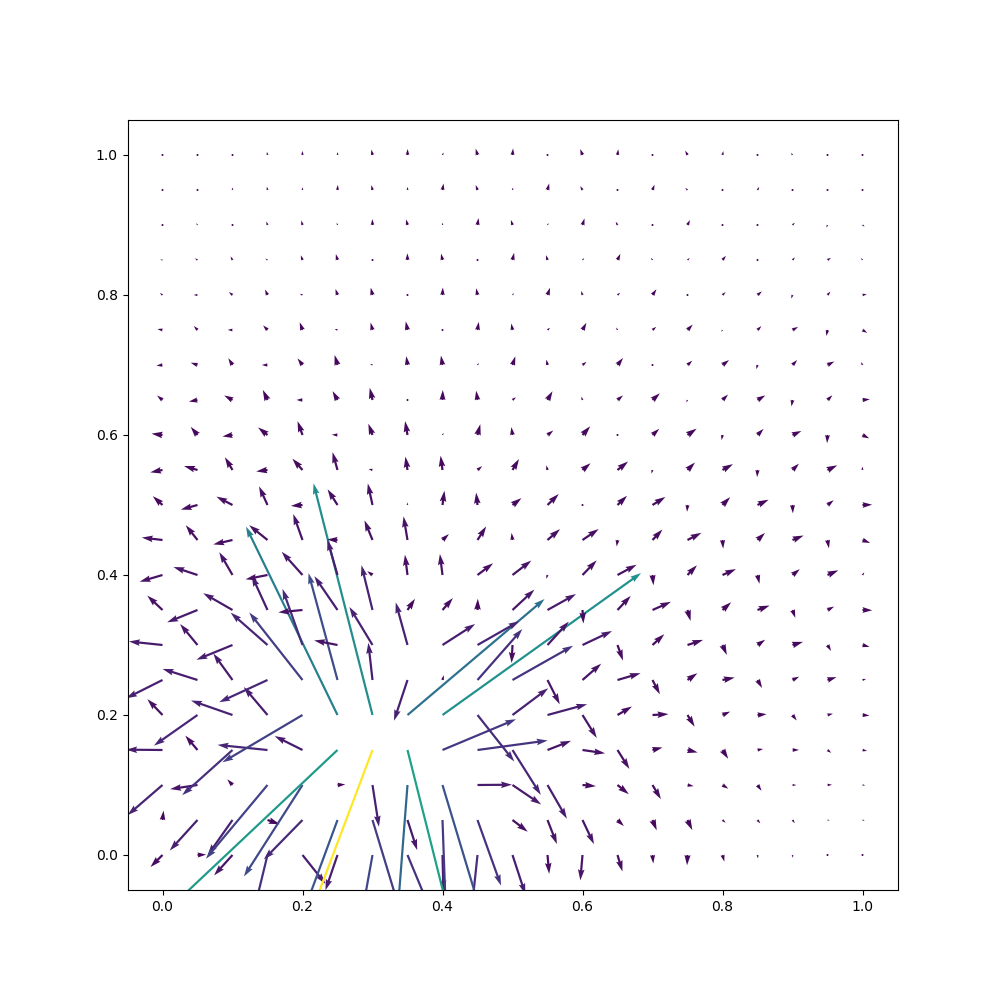}
\includegraphics[width=5cm]{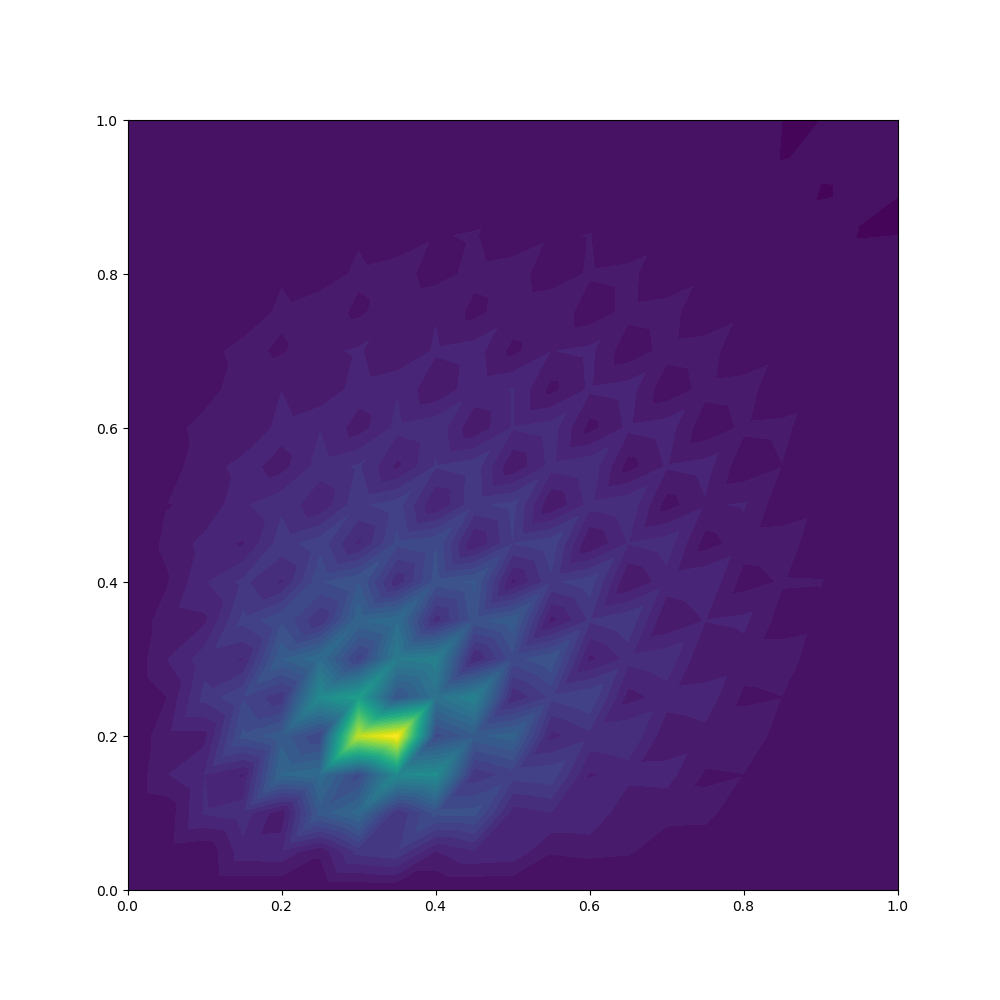}
}

\subcaptionbox*{$N=40$}
{
\includegraphics[width=5cm]{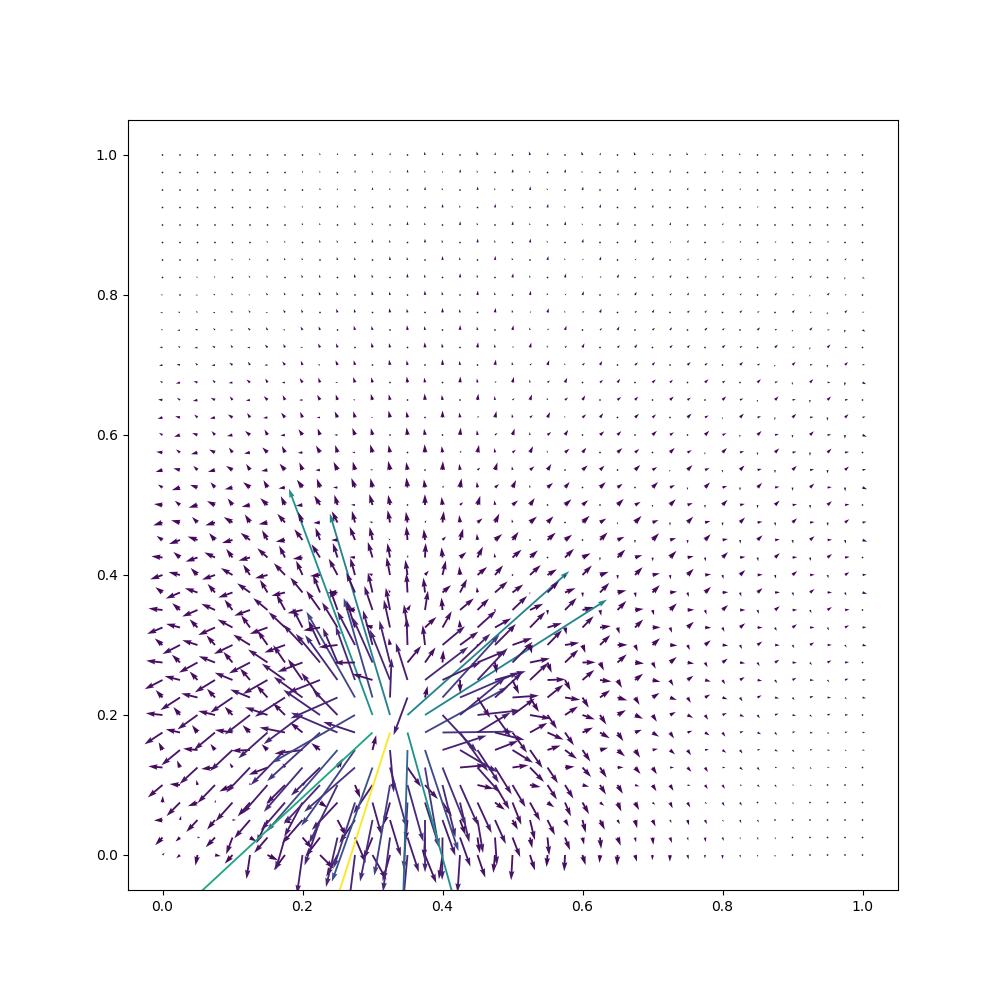}
\includegraphics[width=5cm]{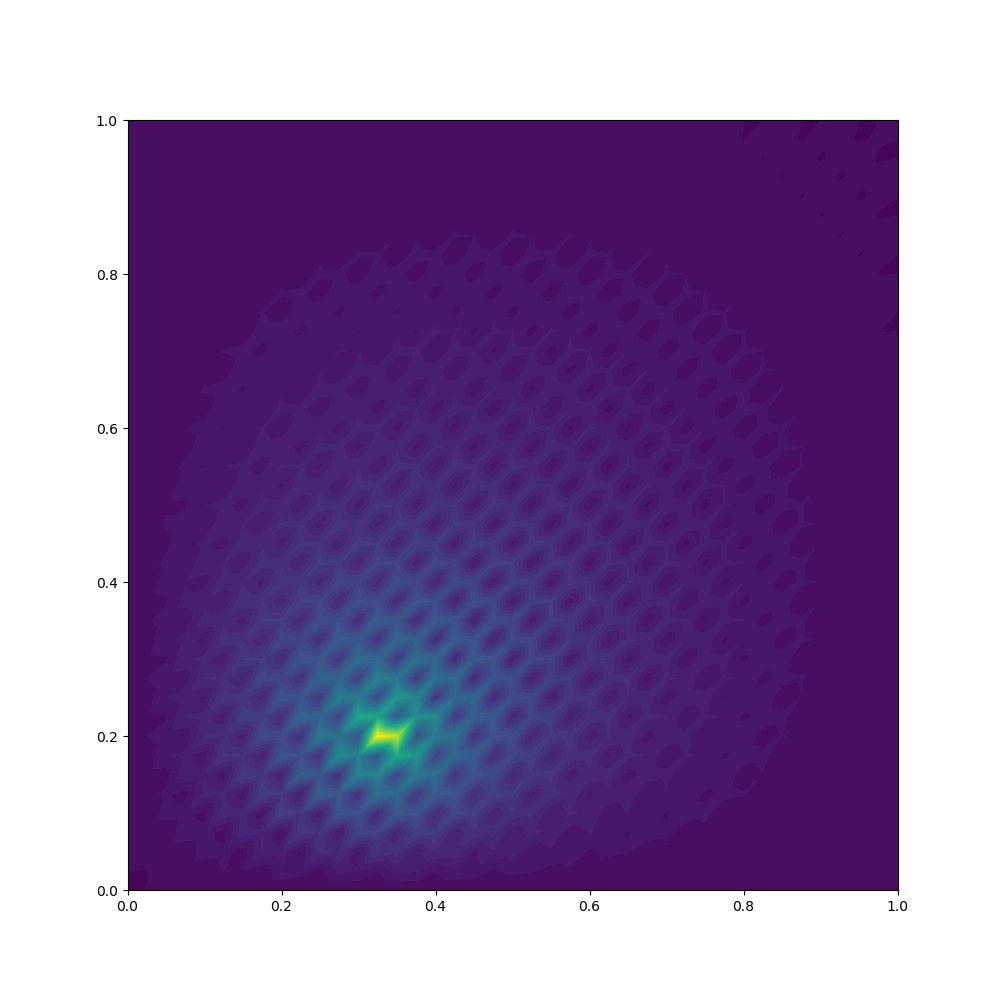}
}

\caption{Solution corresponding to $f$ equal to the approximation of the Dirac
delta function centered at $(1/3,1/5)$ on a coarser and a finer \meshright
mesh}
\label{fg:deltaright}
\end{figure}

\begin{figure}

\subcaptionbox*{$N=20$}
{
\includegraphics[width=5cm]{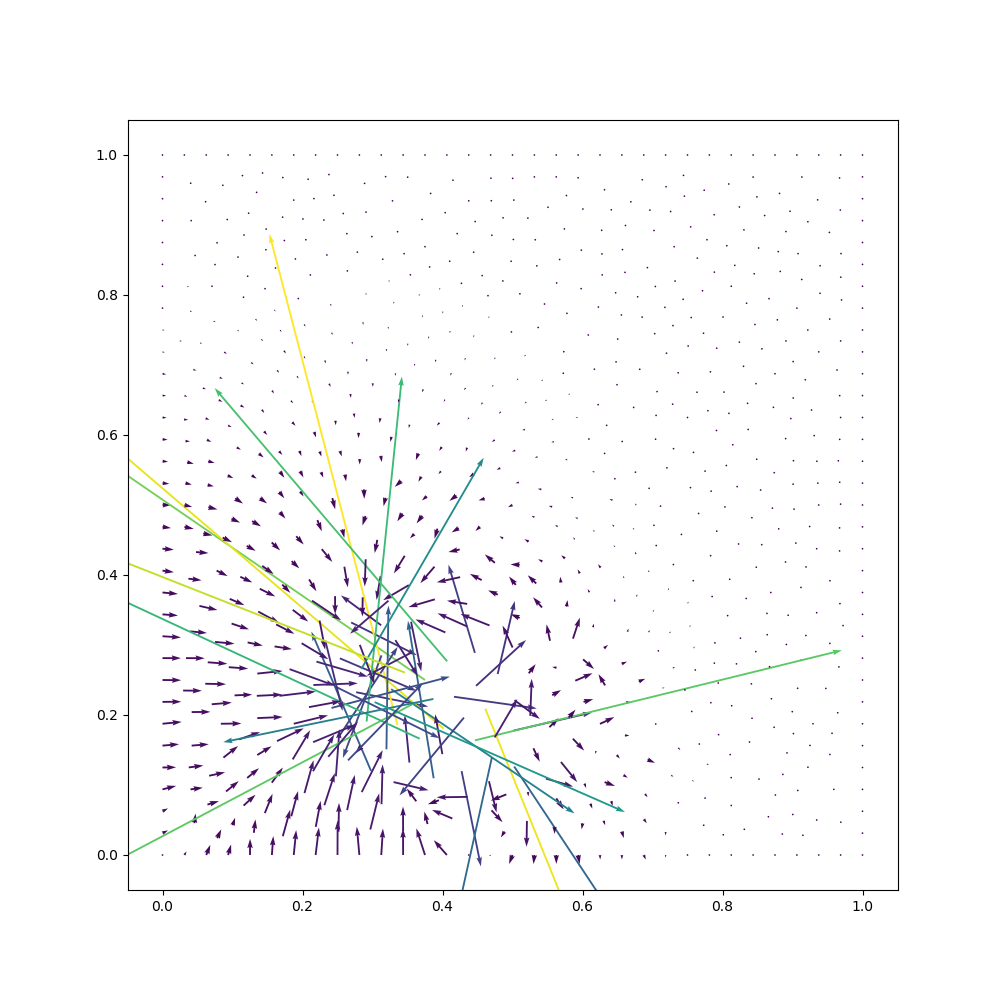}
\includegraphics[width=5cm]{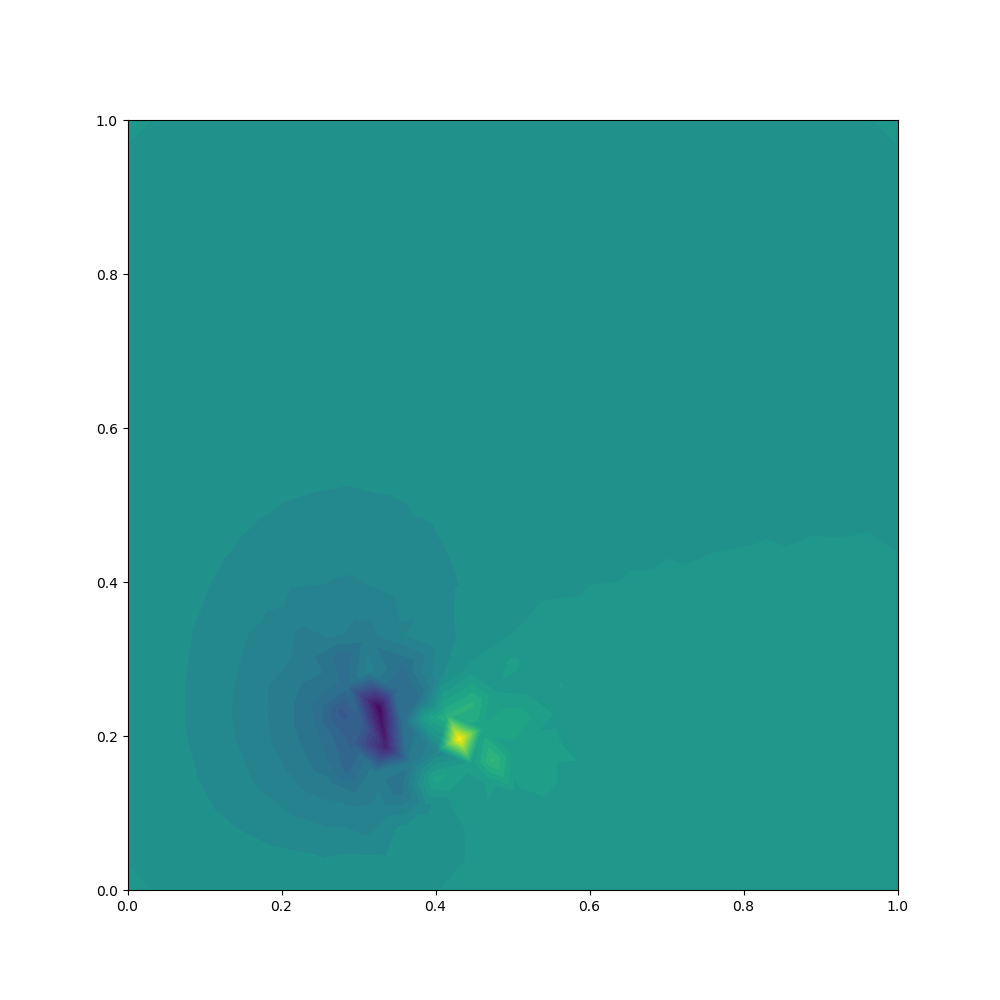}
}

\subcaptionbox*{$N=40$}
{
\includegraphics[width=5cm]{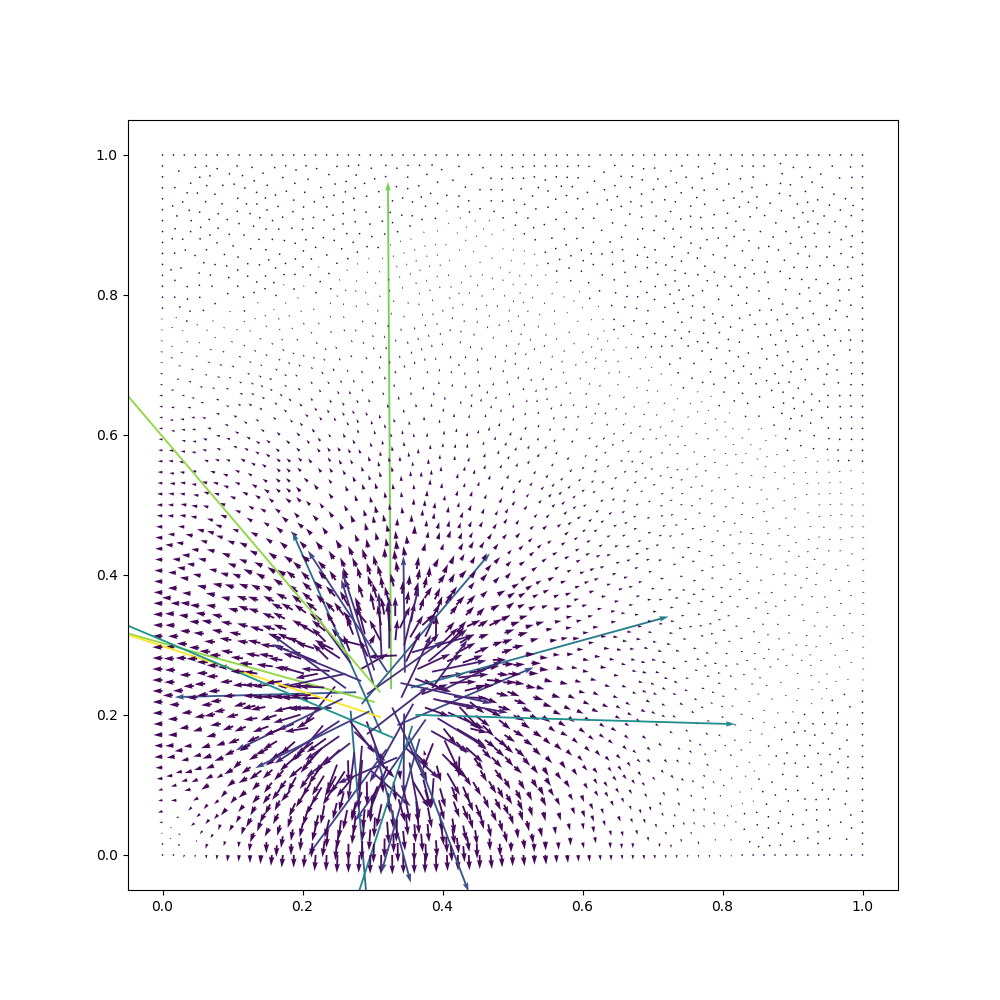}
\includegraphics[width=5cm]{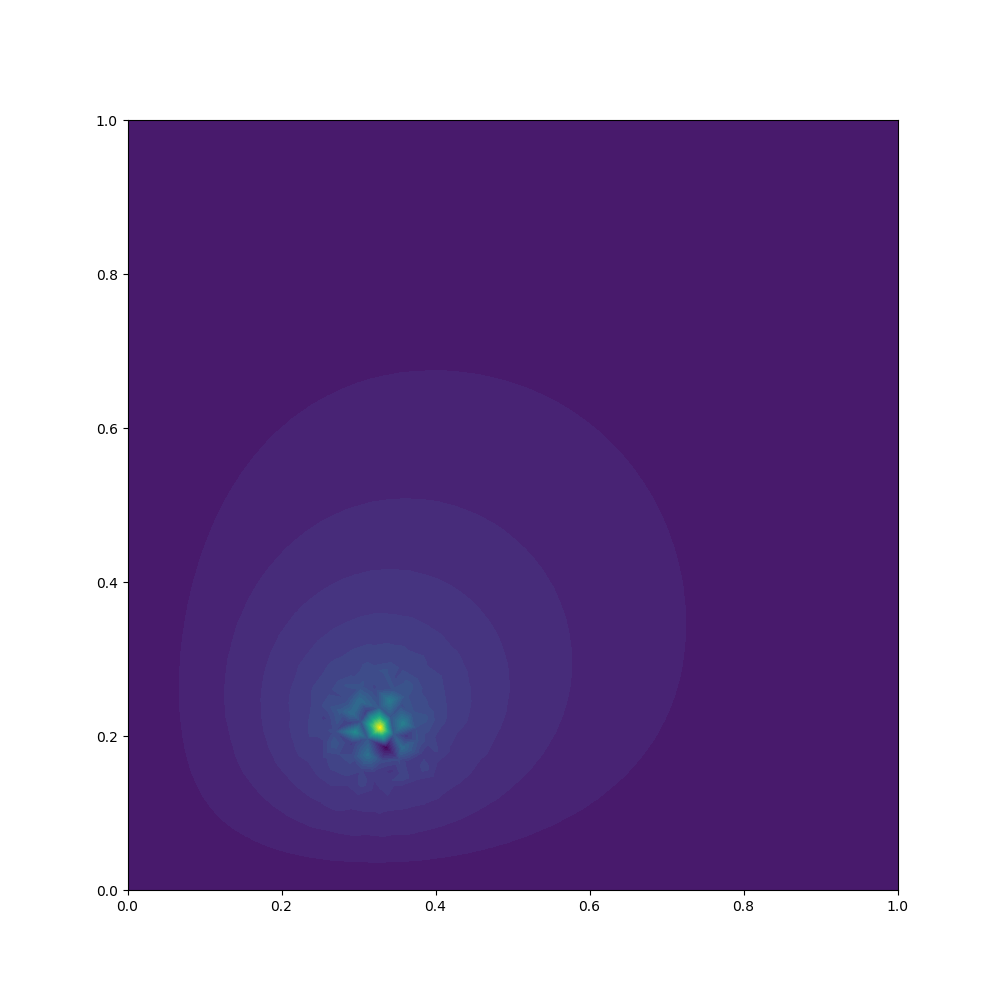}
}

\caption{Solution corresponding to $f$ equal to the approximation of the Dirac
delta function centered at $(1/3,1/5)$ on a coarser and a finer
\meshnonstructured mesh}
\label{fg:deltaunstr}
\end{figure}

We first consider the case of a singular $f$, equal to an approximation of the
Dirac delta function centered at $(1/3,1/5)$.
The results for the \meshcrossed, \meshright, and \meshnonstructured mesh
sequences are reported in Figures~\ref{fg:deltacross}, \ref{fg:deltaright},
and~\ref{fg:deltaunstr}, respectively.
It is clear that the solution is affected by spurious components that are
pretty much related to oscillating eigenfunctions like the one reported in
Figure~\ref{fg:infsup}.

\begin{figure}

\subcaptionbox*{$N=20$}
{
\includegraphics[width=5cm]{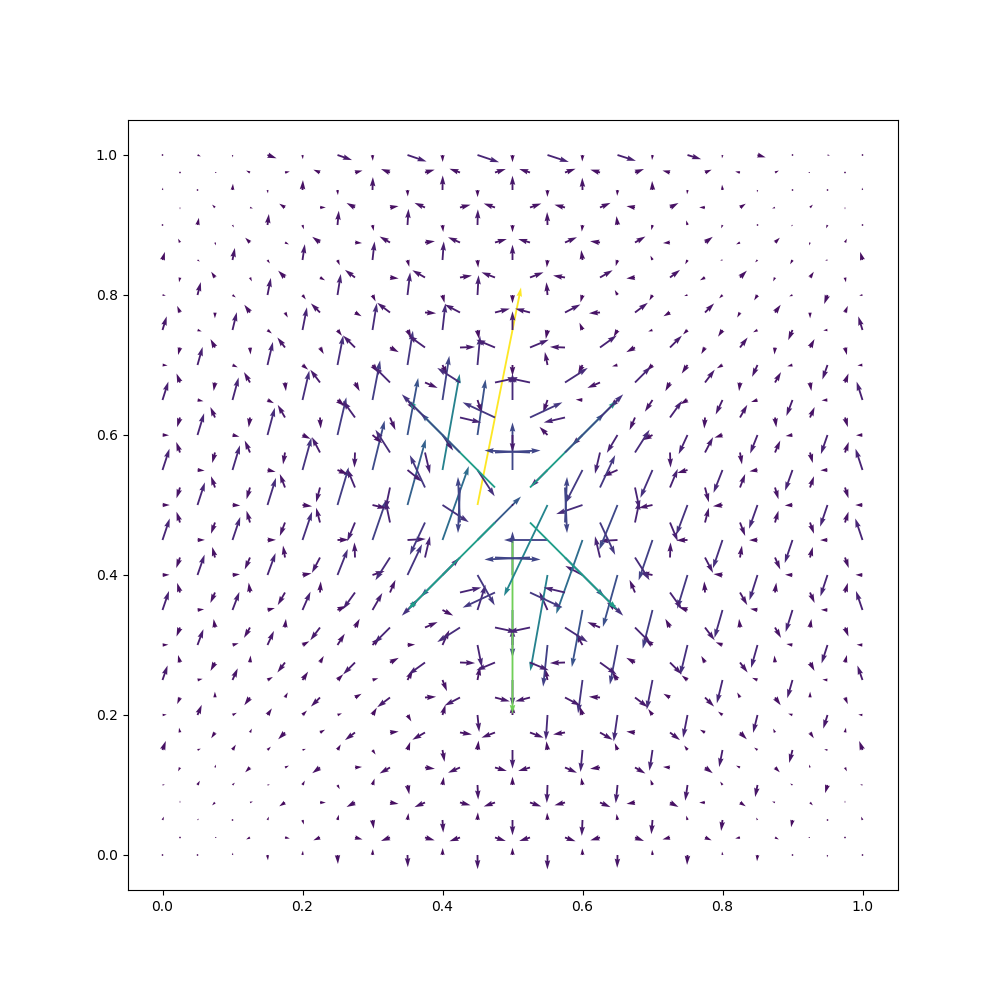}
\includegraphics[width=5cm]{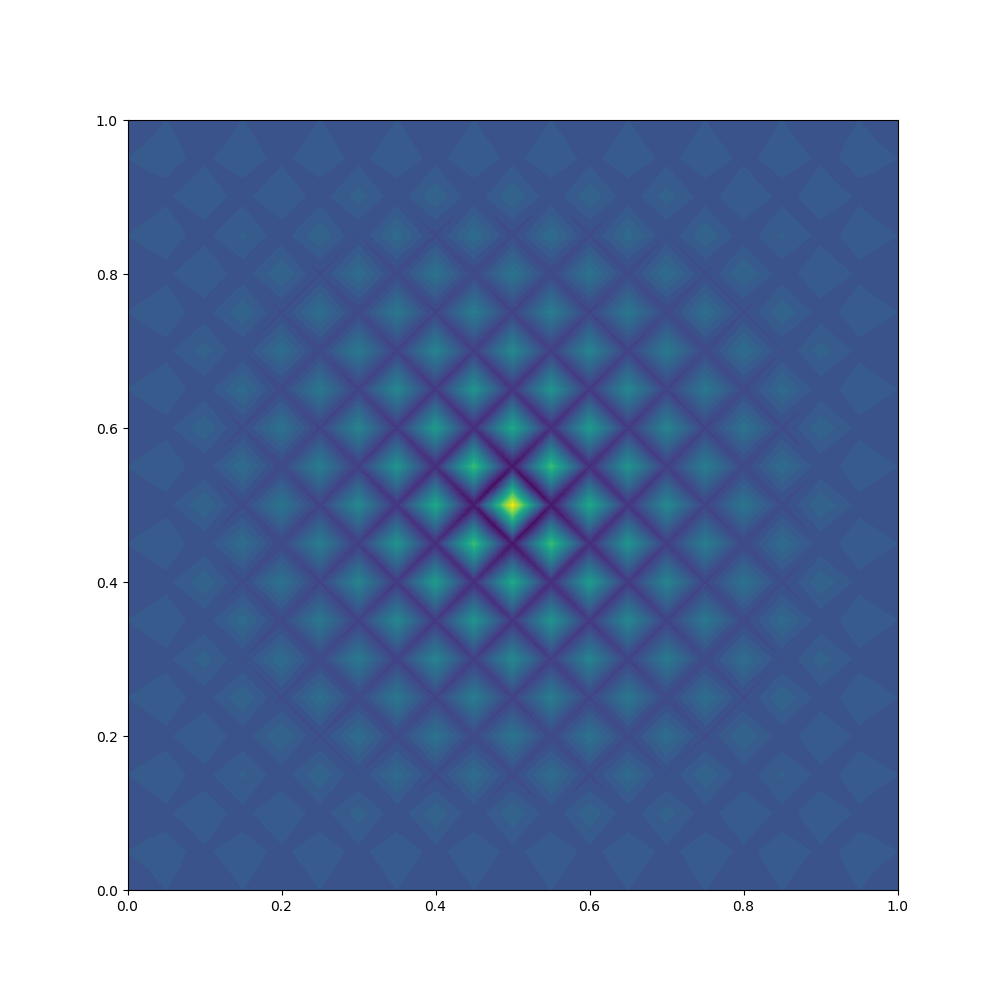}
}

\subcaptionbox*{$N=40$}
{
\includegraphics[width=5cm]{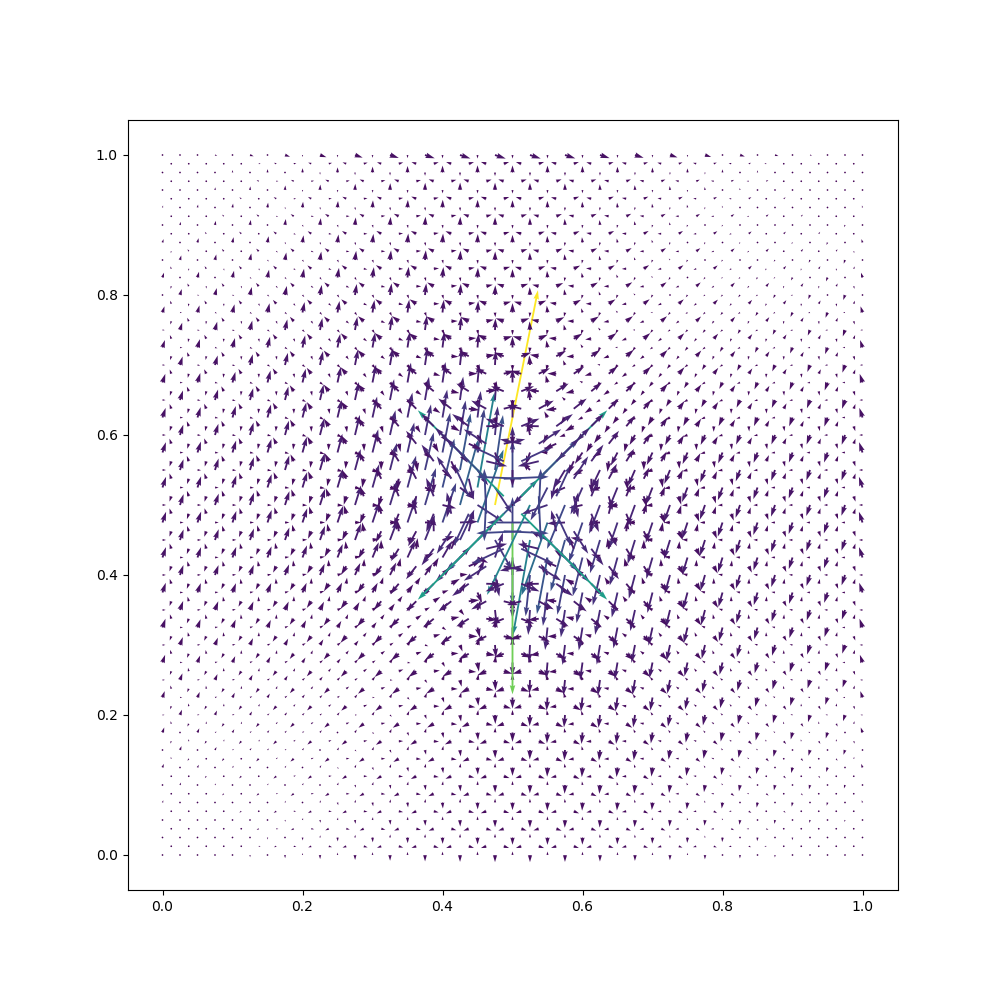}
\includegraphics[width=5cm]{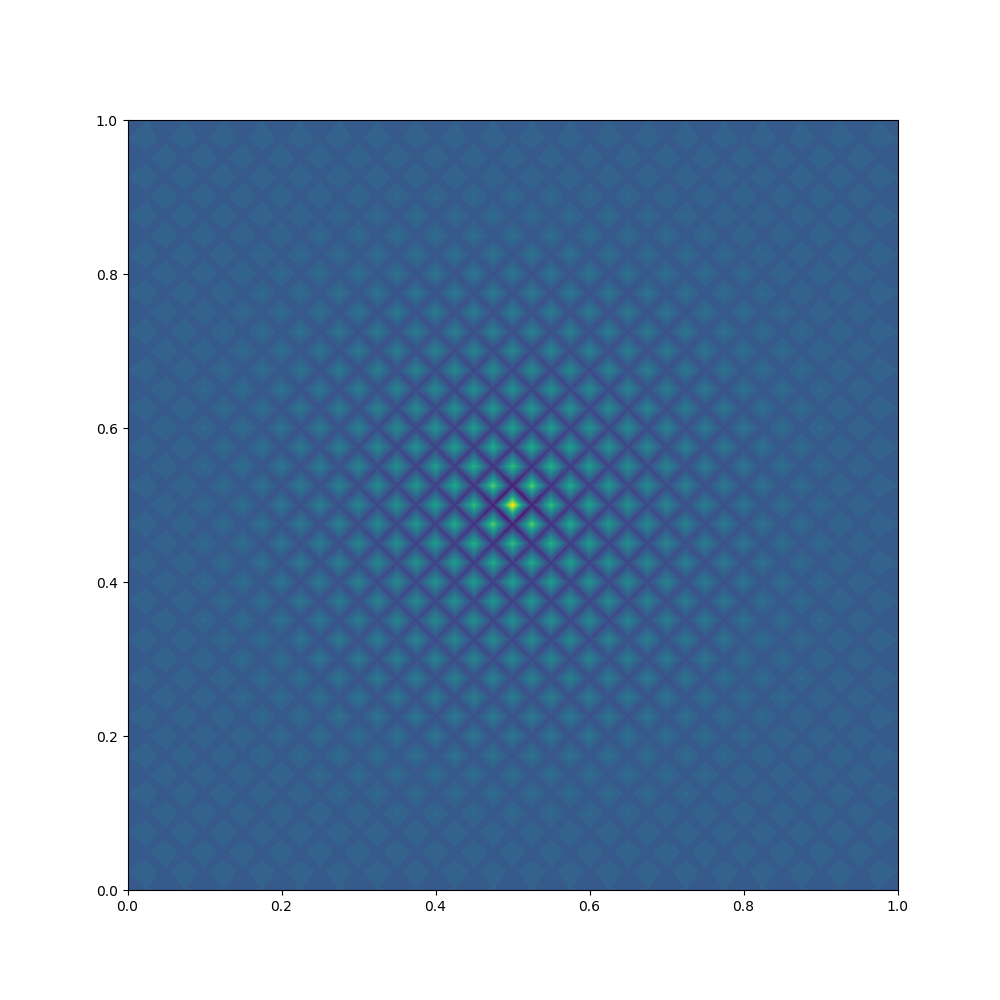}
}

\caption{Solution corresponding to $f$ equal to the approximation of the Dirac
delta function centered at $(1/2,1/2)$ on a coarser and a finer \meshcrossed
mesh}
\label{fg:deltaccross}
\end{figure}

\begin{figure}

\subcaptionbox*{$N=20$}
{
\includegraphics[width=5cm]{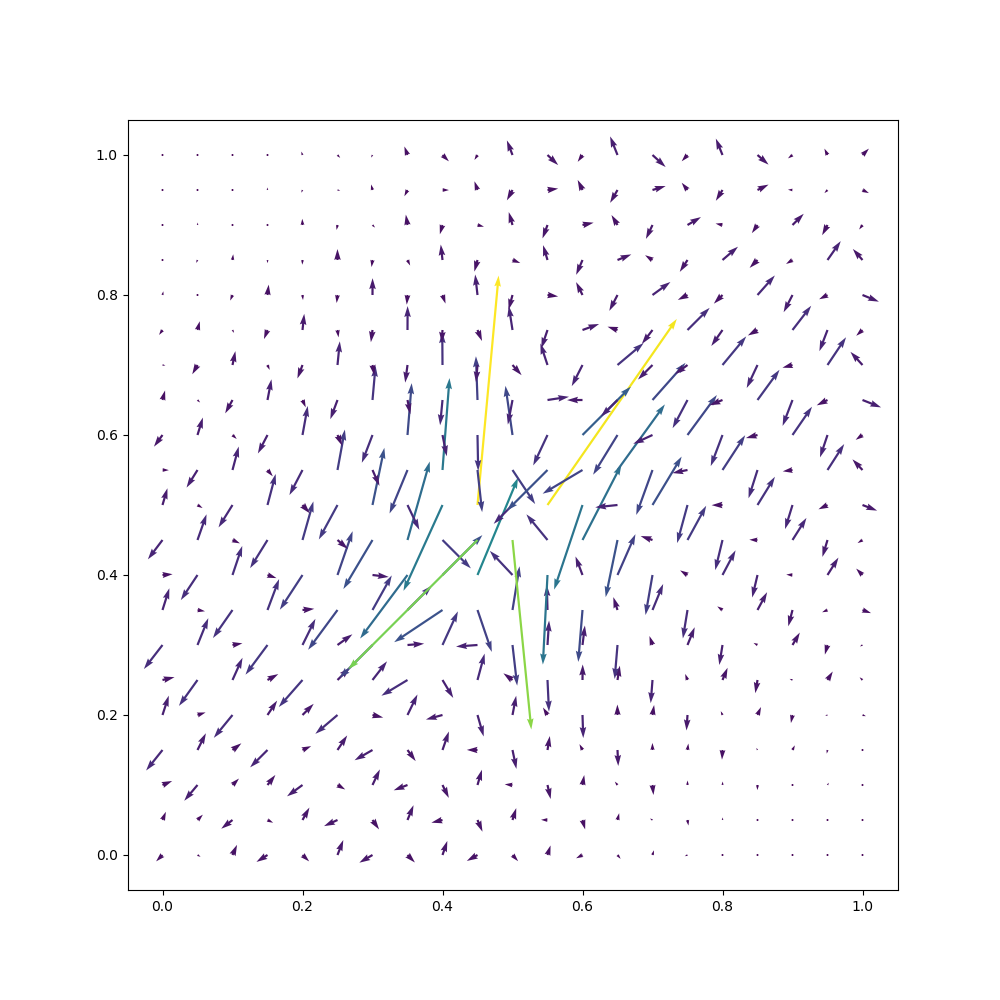}
\includegraphics[width=5cm]{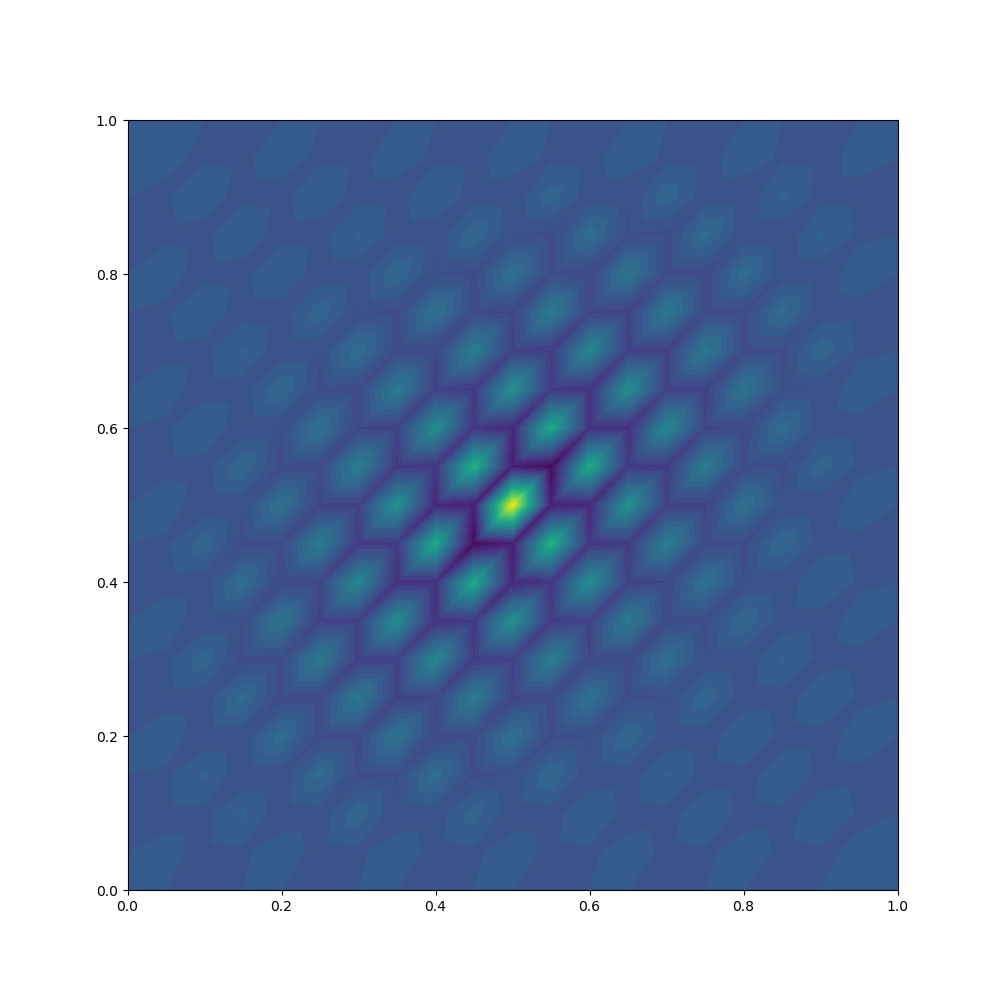}
}

\subcaptionbox*{$N=40$}
{
\includegraphics[width=5cm]{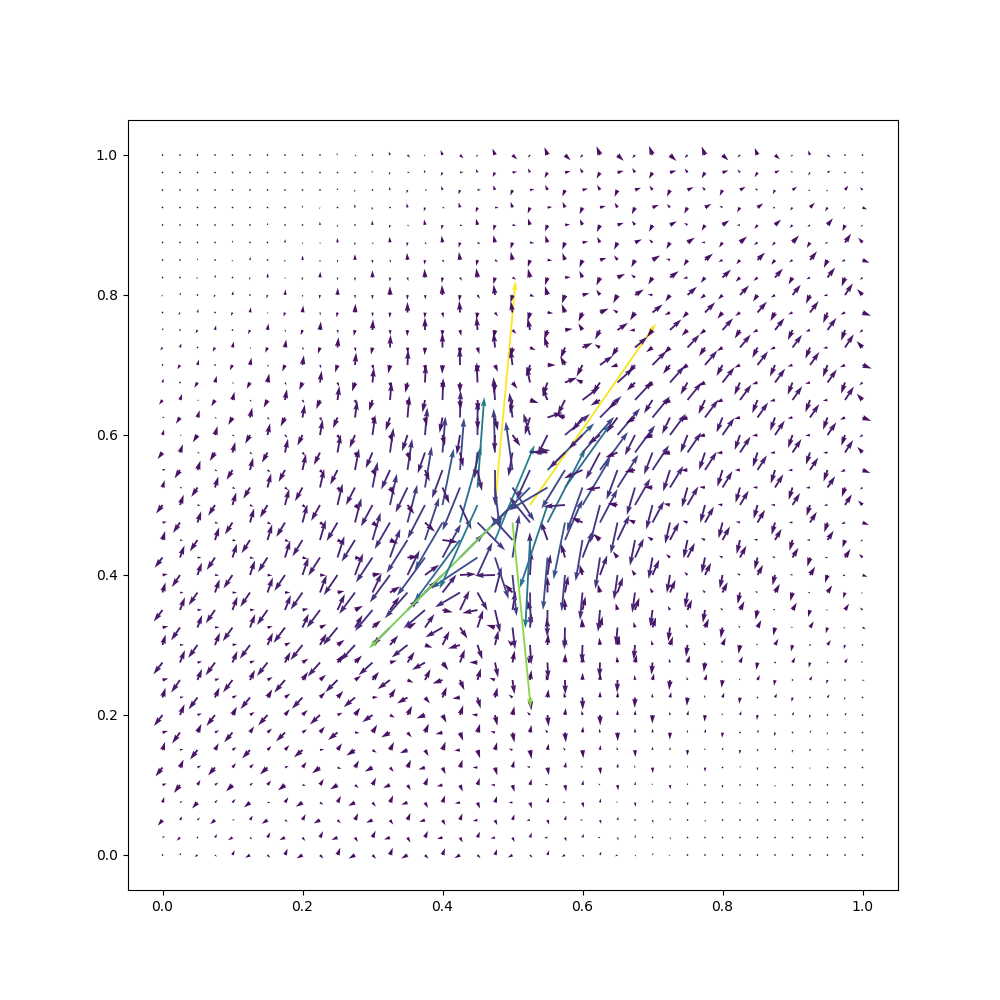}
\includegraphics[width=5cm]{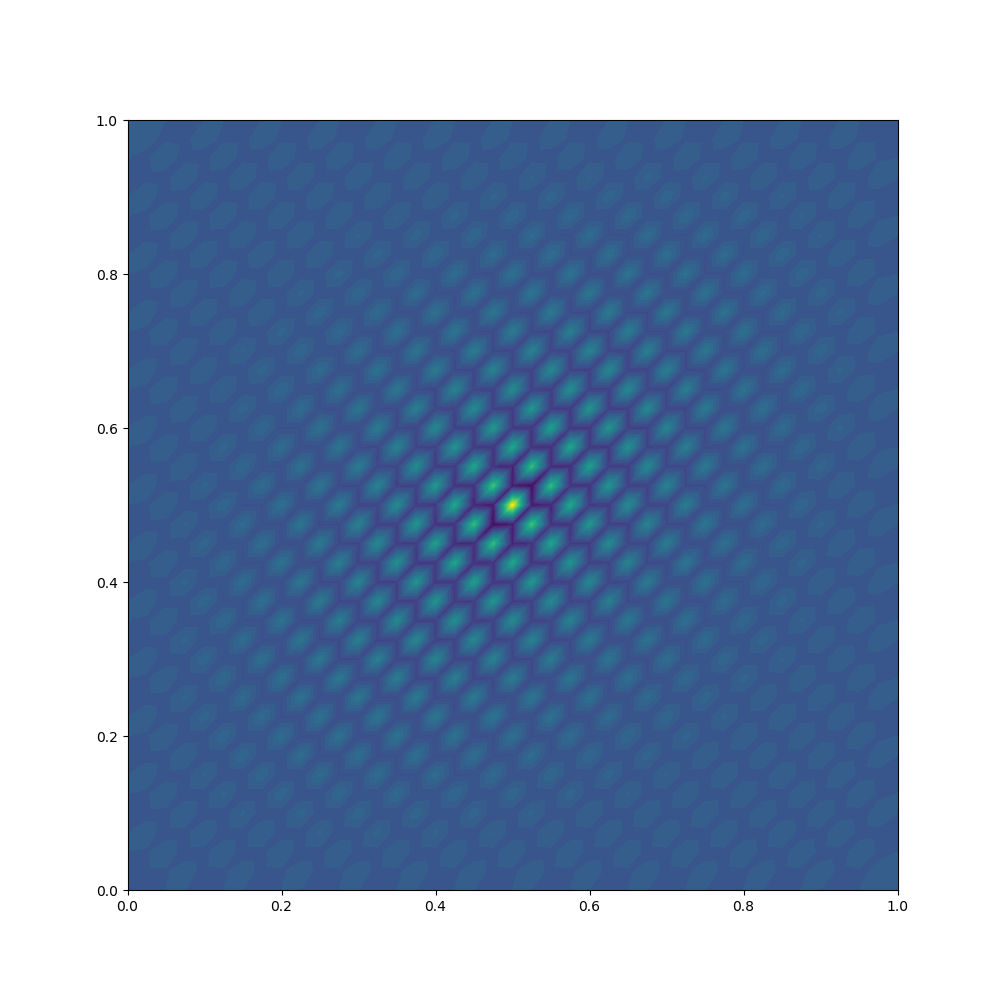}
}

\caption{Solution corresponding to $f$ equal to the approximation of the Dirac
delta function centered at $(1/2,1/2)$ on a coarser and a finer \meshright
mesh}
\label{fg:deltacright}
\end{figure}

\begin{figure}

\subcaptionbox*{$N=20$}
{
\includegraphics[width=5cm]{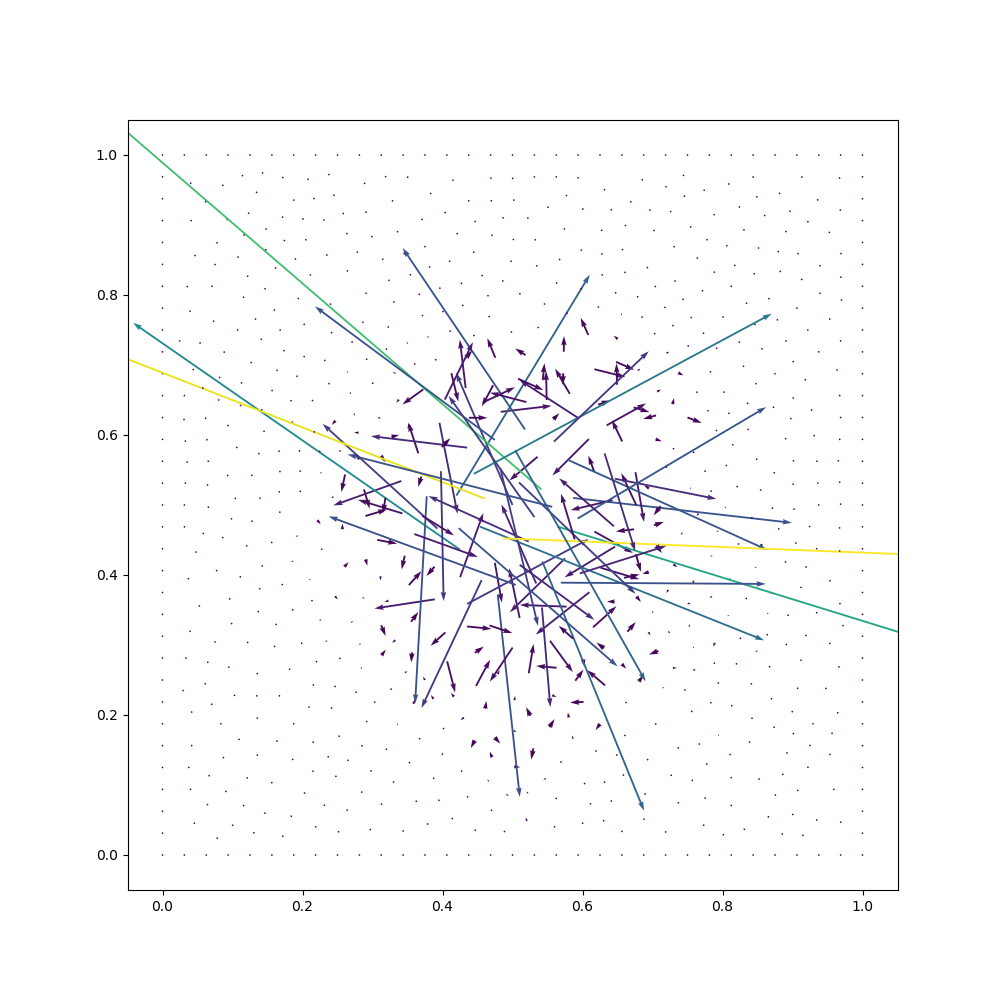}
\includegraphics[width=5cm]{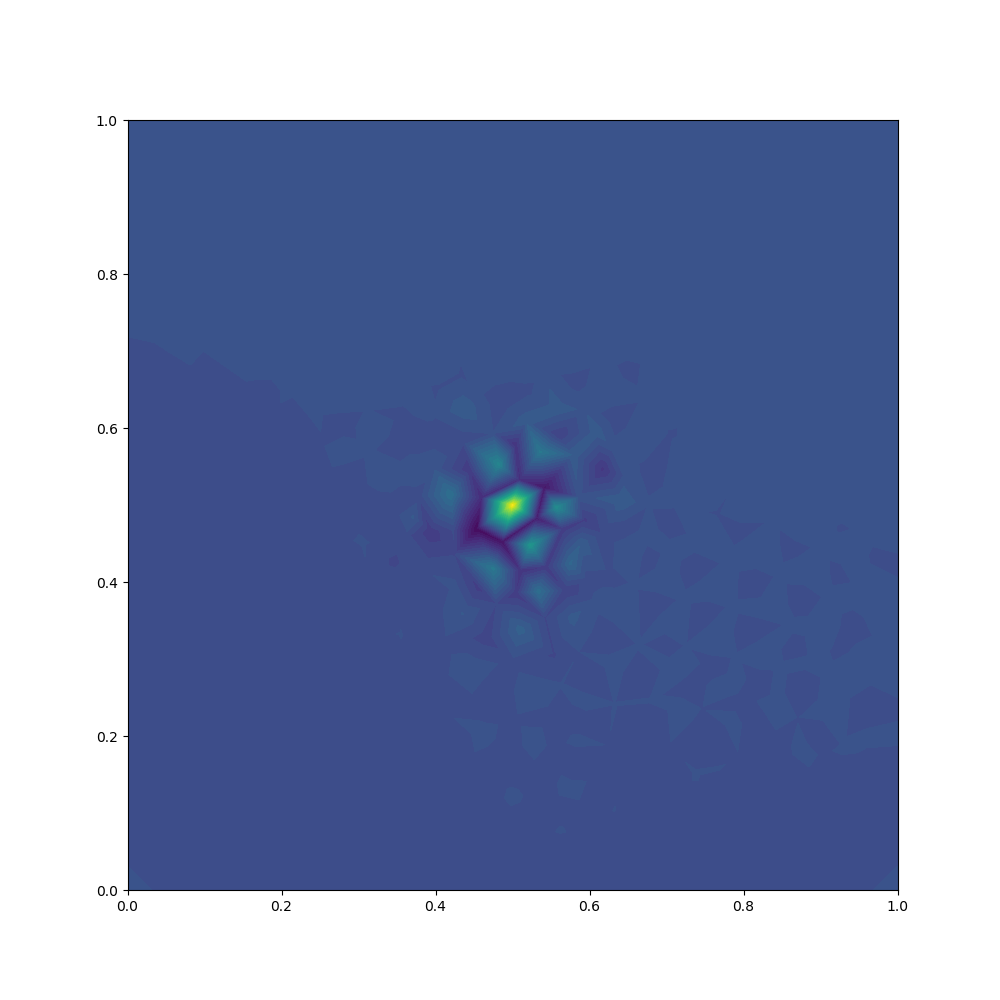}
}

\subcaptionbox*{$N=40$}
{
\includegraphics[width=5cm]{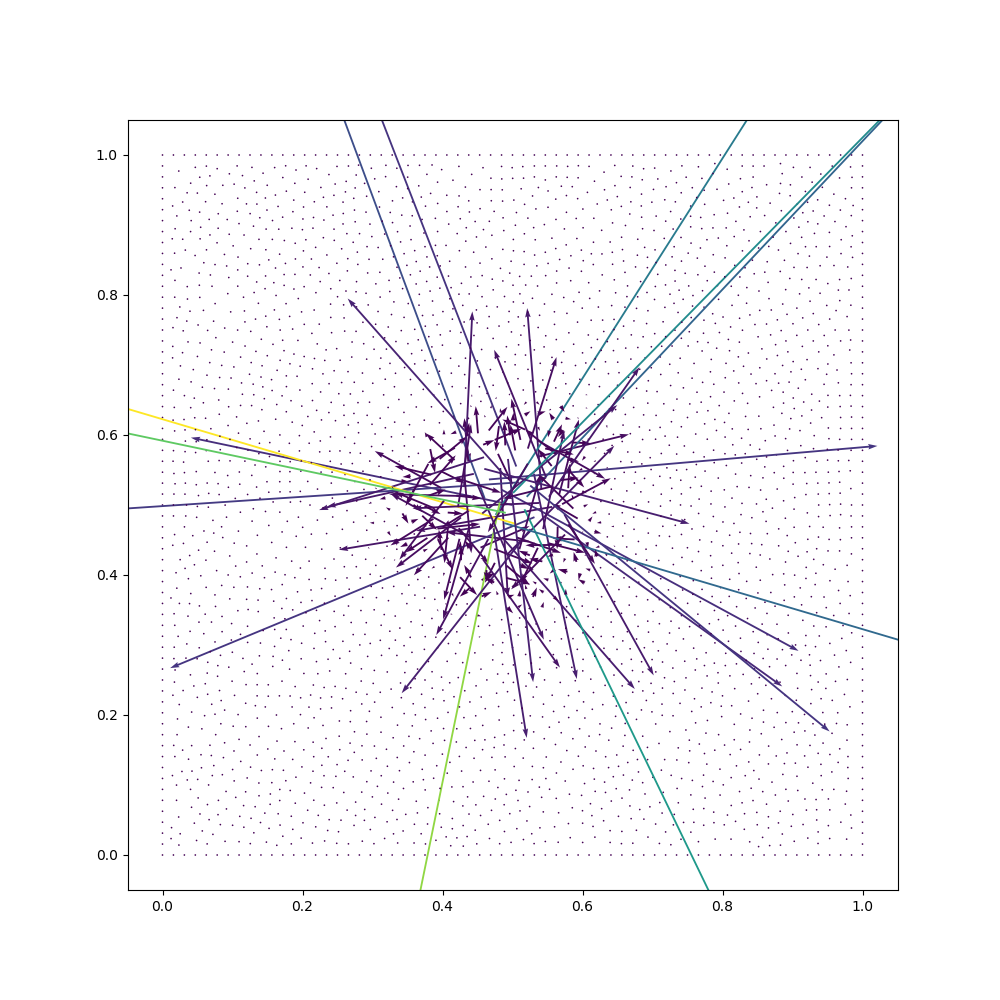}
\includegraphics[width=5cm]{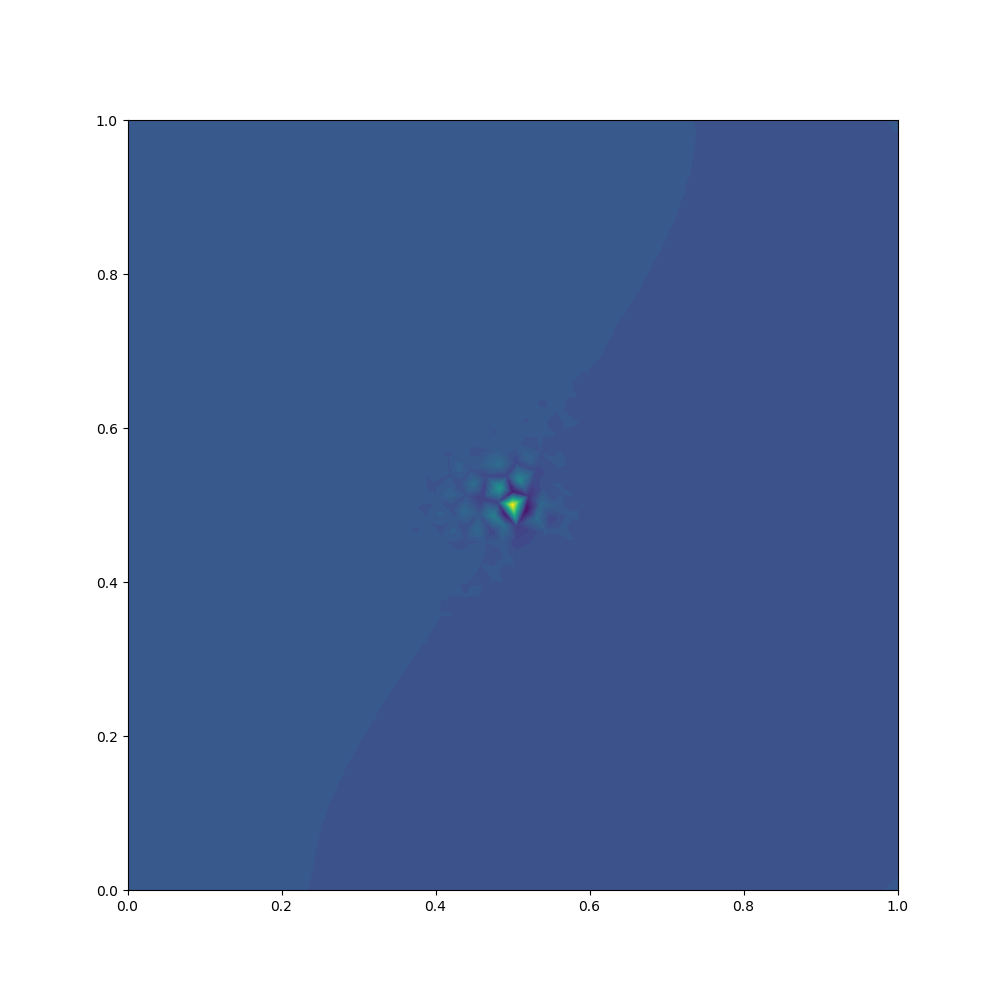}
}

\caption{Solution corresponding to $f$ equal to the approximation of the Dirac
delta function centered at $(1/2,1/2)$ on a coarser and a finer 
\meshnonstructured mesh}
\label{fg:deltacunstr}
\end{figure}

The situation is even more apparent if the Dirac delta function is centered at
the point $(1/2,1/2)$, that is at the center of our domain $\Omega$. The
corresponding solutions are reported in Figures~\ref{fg:deltaccross},
\ref{fg:deltacright}, and~\ref{fg:deltacunstr}.

\begin{figure}

\subcaptionbox*{\meshcrossed mesh}
{
\includegraphics[width=5cm]{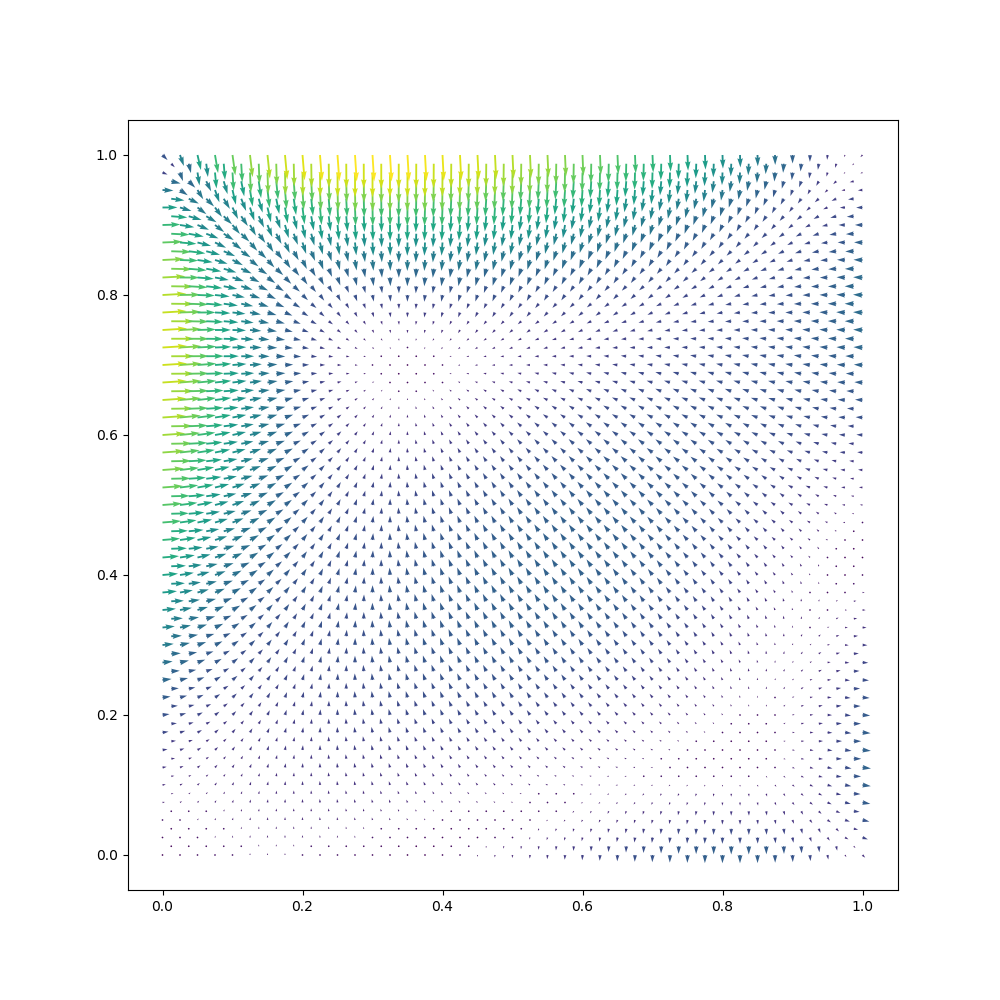}
\includegraphics[width=5cm]{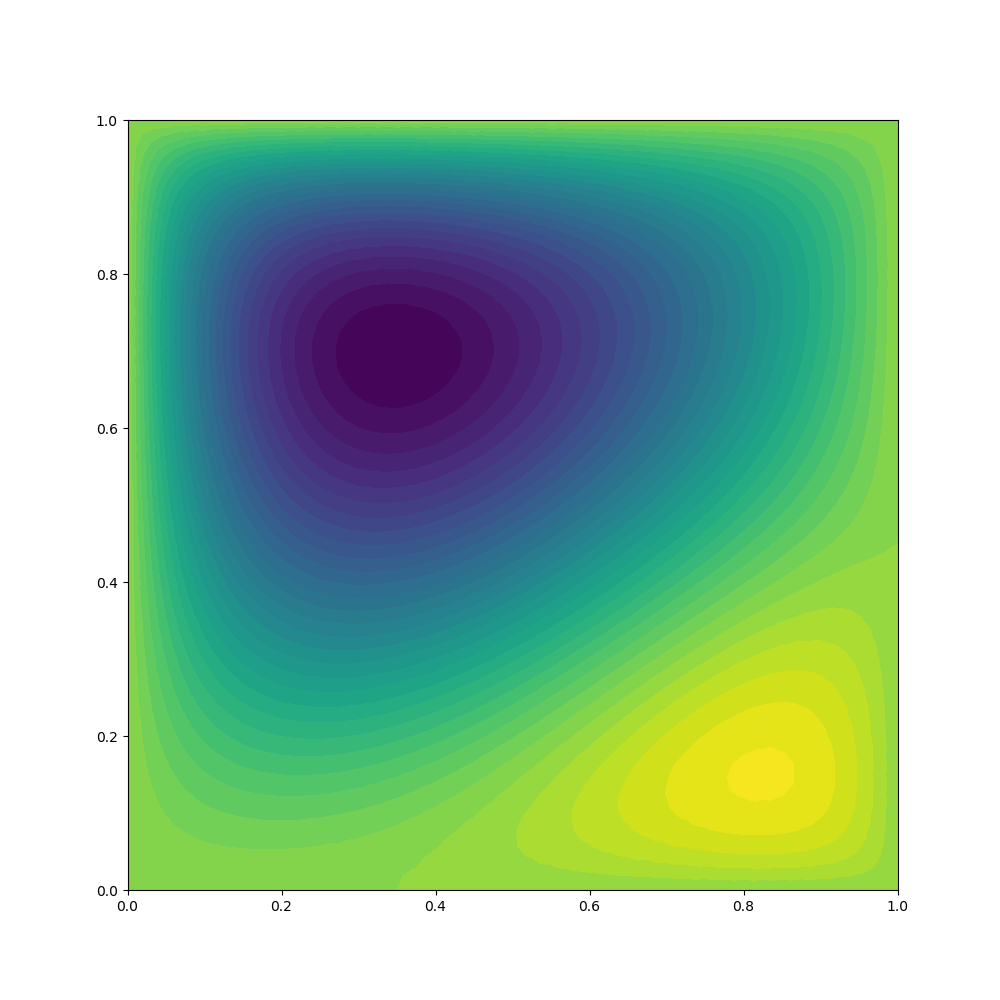}
}

\subcaptionbox*{\meshright mesh}
{
\includegraphics[width=5cm]{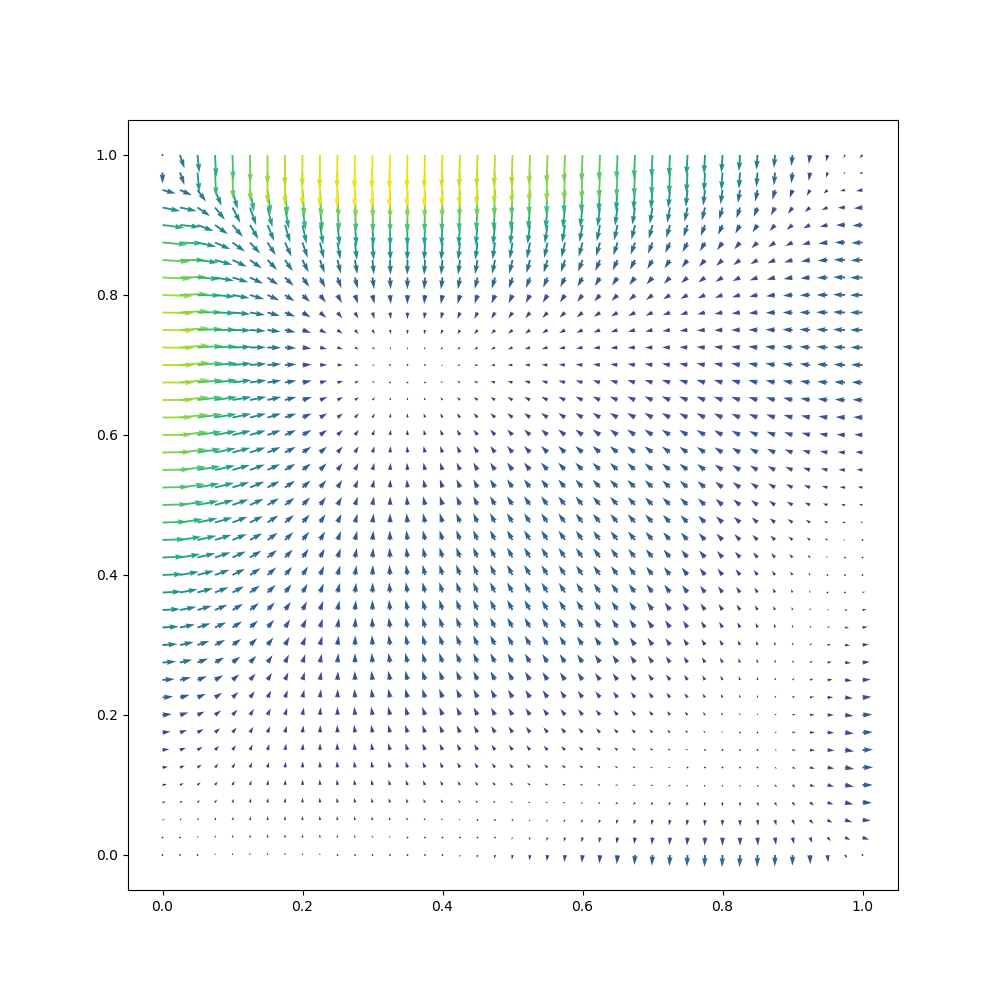}
\includegraphics[width=5cm]{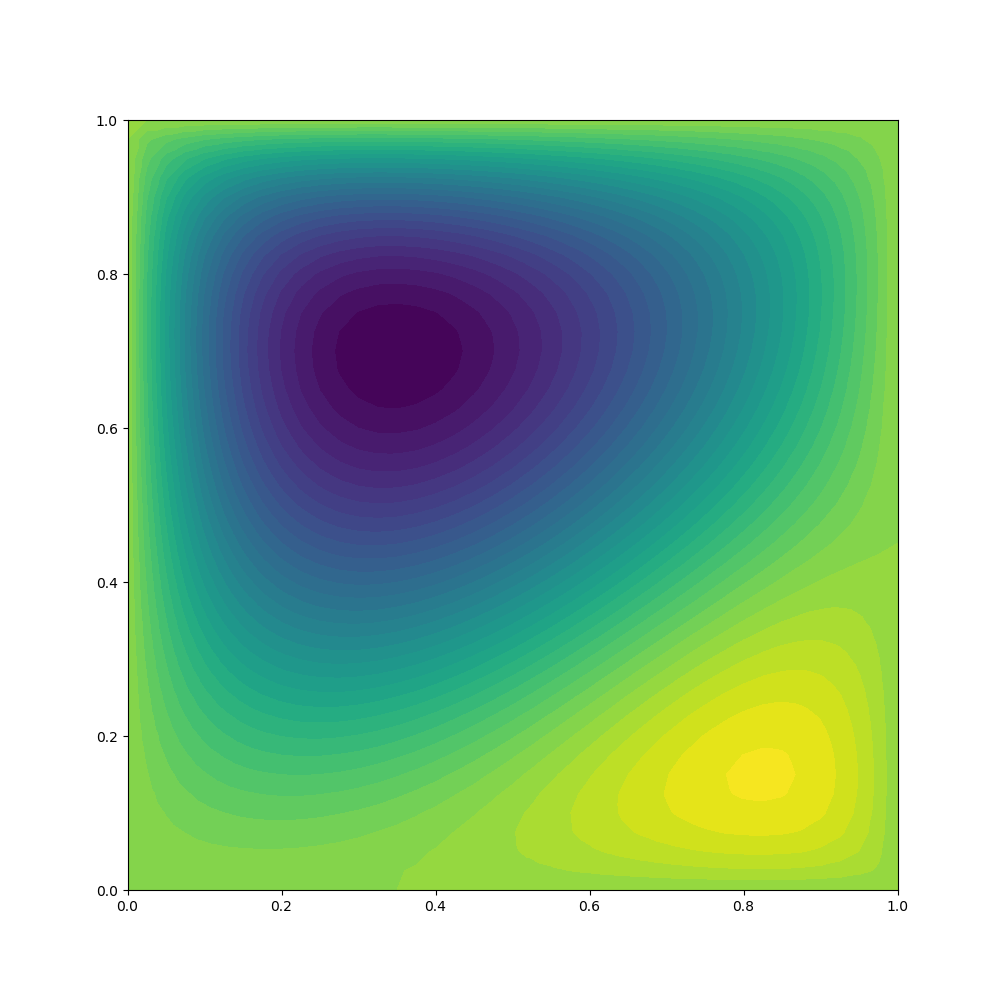}
}

\subcaptionbox*{\meshnonstructured mesh}
{
\includegraphics[width=5cm]{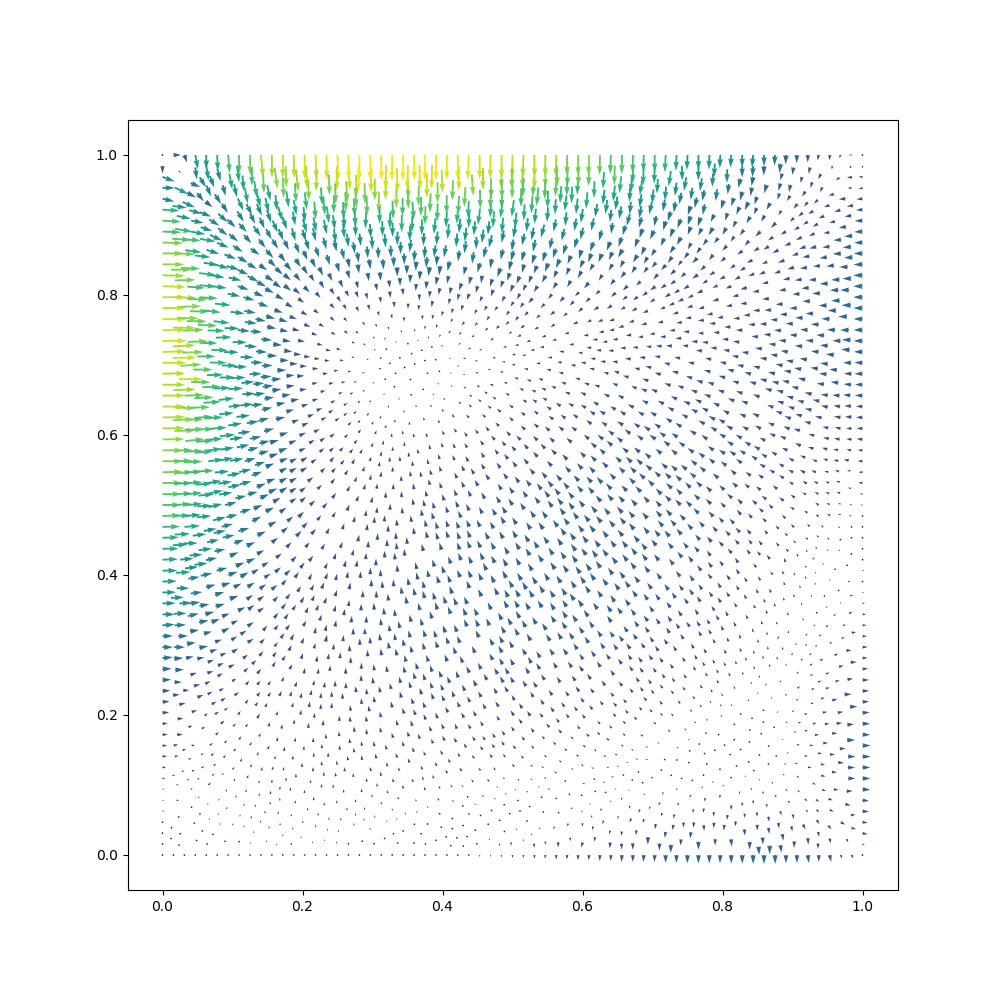}
\includegraphics[width=5cm]{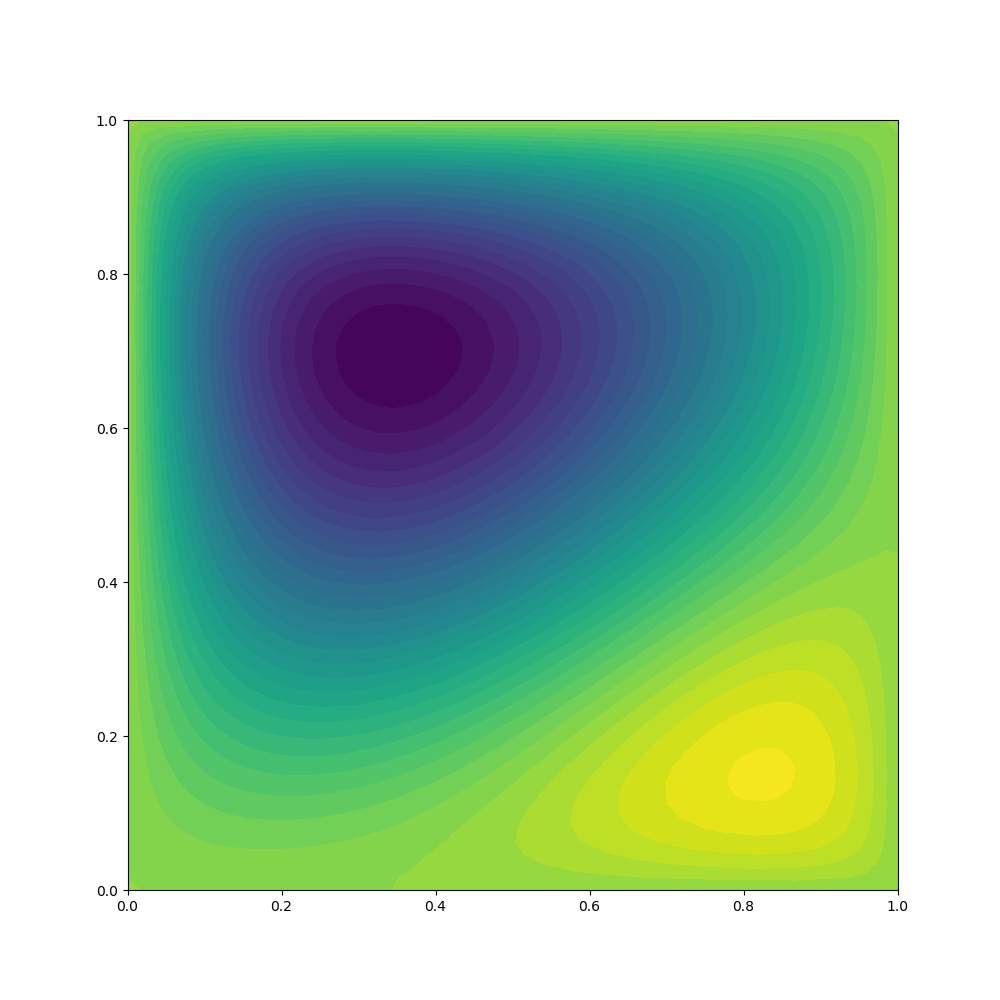}
}
\caption{Solution corresponding to the smooth right hand side
$f(x,y)=x-3y+\sin(x)$}
\label{fg:smooth}
\end{figure}

Figure~\ref{fg:smooth} shows the solution in a case when $f$ is smooth. We
take $f(x,y)=x-3y+\sin(x)$ and we can observe from the figures that the
discrete solutions do not present any spurious oscillation.

We conclude this section by reporting the rates of convergence in some cases
when the exact solutions are known.

Table~\ref{tb:smooth} shows the rates of convergence with respect of the
meshsize in the case when the solution is one of the Laplace eigenfunctions,
namely $u(x,y)=\sin(\pi x)\sin(2\pi y)$

\begin{table}
\begin{tabular}{ r|l|l|l }
dof's & $\|\bfsigma-\bfsigma_h\|_0$ & $\|u-u_h\|_0$ & $\|\grad(u-u_h)\|_0$\\
\hline
3281 & 7.178e-02 & 4.415e-03 & 7.324e-02\\
  12961 & 3.389e-02 (1.09) & 1.215e-03 (1.88) & 4.081e-02 (0.85)\\
  51521 & 1.695e-02 (1.00) & 3.029e-04 (2.01) & 2.036e-02 (1.01)\\
 205441 & 8.341e-03 (1.02) & 7.697e-05 (1.98) & 1.023e-02 (1.00)\\
 820481 & 4.173e-03 (1.00) & 1.920e-05 (2.01) & 5.094e-03 (1.01)\\
3279361 & 2.083e-03 (1.00) & 4.751e-06 (2.02) & 2.513e-03 (1.02)\\
13112321 & 1.051e-03 (0.99) & 1.182e-06 (2.01) & 1.263e-03 (0.99)
\end{tabular}
\caption{Rate of convergence with respect to $h$: smooth solution on the
\meshcrossed mesh in a square}
\label{tb:smooth}
\end{table}

The last example is related to a case where the solution is singular due to a
reentrant corner on the L-shaped domain
$\Omega=(-1,1)^2\setminus(0,1)\times(-1,0)$. We take as exact solution the
harmonic function $u(\rho,\theta)=\rho^{2/3}\sin((2/3)\theta)$, where
$(\rho,\theta)$ are the polar coordinates centered at the origin. We consider
$f=0$ and Dirichlet boundary conditions given by the exact solution. In
particular, the solution is vanishing along the two sides of the reentrant
corner meeting at the origin. It is well known that $u\in
H^{5/3-\epsilon}(\Omega)$ for $\epsilon>0$ but $u\not\in H^{5/3}(\Omega)$.
The rates of convergence are shown in Table~\ref{tb:singular} and are the
expected ones: approximately order $2/3$ for the energy norm and the
suboptimal order $4/3$ for the error $\|u-u_h\|$ in $L^2(\Omega)$.
\begin{table}
\begin{tabular}{ r|l|l|l }
dof's & $\|\bfsigma-\bfsigma_h\|_0$ & $\|u-u_h\|_0$ & $\|\grad(u-u_h)\|_0$\\
\hline
1155 & 7.181e-02 & 3.970e-03 & 1.136e-01\\
   4485 & 4.864e-02 (0.57) & 1.302e-03 (1.64) & 7.063e-02 (0.70)\\
  17673 & 3.096e-02 (0.66) & 4.392e-04 (1.59) & 4.457e-02 (0.67)\\
  70161 & 1.967e-02 (0.66) & 1.518e-04 (1.54) & 2.810e-02 (0.67)\\
 279585 & 1.246e-02 (0.66) & 5.395e-05 (1.50) & 1.779e-02 (0.66)\\
1116225 & 7.875e-03 (0.66) & 1.963e-05 (1.46) & 1.125e-02 (0.66)\\
4460673 & 4.972e-03 (0.66) & 7.304e-06 (1.43) & 7.102e-03 (0.66)
\end{tabular}
\caption{Rate of convergence with respect to $h$: singular solution on a
\meshnonstructured mesh in the L-shaped domain}
\label{tb:singular}
\end{table}
Figure~\ref{fg:Lshaped} shows a typical mesh for this computation and
Figure~\ref{fg:singular} reports the solution $(\bfsigma_h,u_h)$ computed
on the same mesh.

\begin{figure}
\includegraphics[width=5cm]{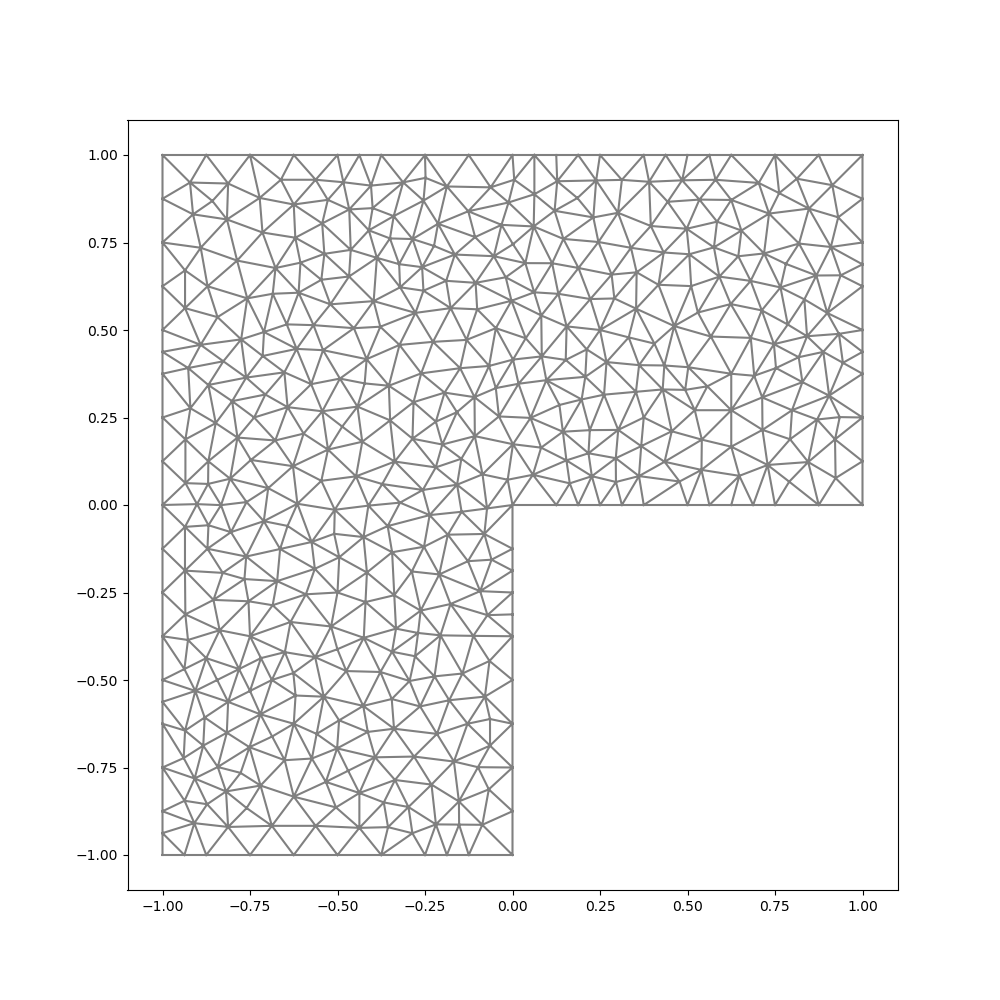}
\caption{\meshnonstructured mesh of the L-shaped domain}
\label{fg:Lshaped}
\end{figure}

\begin{figure}
\includegraphics[width=5cm]{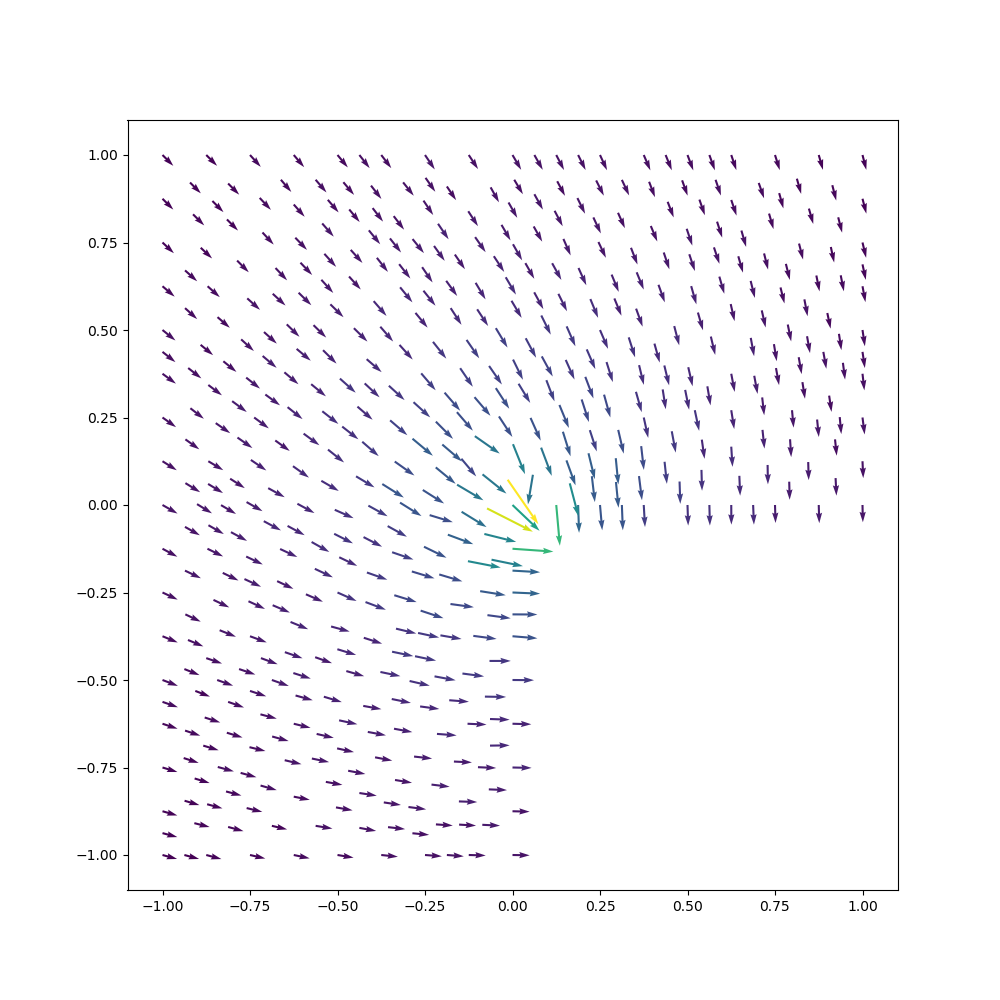}
\includegraphics[width=5cm]{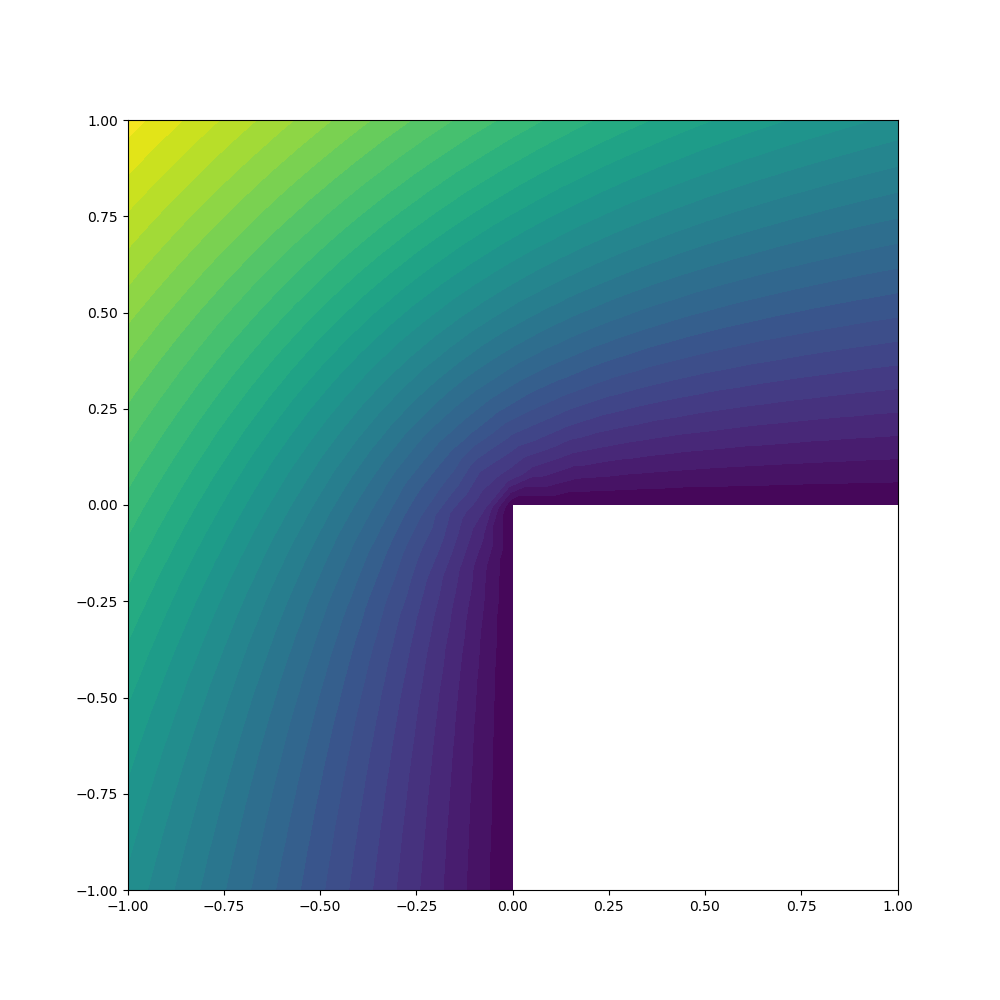}
\caption{Singular solution approximated on the \meshnonstructured mesh in the
L-shaped domain}
\label{fg:singular}
\end{figure}

\appendix

\section{The asymptotic behavior of the inf-sup constant}
\label{ap:asympt}

In this section we discuss theoretically the asymptotic behavior of the
inf-sup constant that has been studied numerically in Section~\ref{se:infsuph}.

We start by some remarks related to the continuous inf-sup condition: given
$u\in H^1_0(\Omega)$ there exists $\bfsigma\in L^2(\Omega)^2$ such that
\begin{equation}
\aligned
&(\bfsigma,\grad u)=\|\grad u\|_{L^2(\Omega)}^2\\
&\|\bfsigma\|_{L^2(\Omega)}\le C\|\grad u\|_{L^2(\Omega)}.
\endaligned
\label{eq:iscont}
\end{equation}
This can be easily achieved by defining $\bfsigma=\grad u$ and the constant
$C$ is equal to one in this case. The same consideration could lead to a
uniform discrete inf-sup condition if we had the inclusion
$\grad\Qh\subset\Vh$. In our case, however, the inclusion is not satisfied
because the space $\Vh$ is in $\Hdiv$. The natural question is then if it is
possible, at the continuous level, to find $\bfsigma\in\Hdiv$
satisfying~\eqref{eq:iscont}. This is certainly true if $\grad u$ belongs to
$\Hdiv$ which is the case, for instance, when $u$ is the solution of a Poisson
problem $-\Delta u=g$ for some $g$ in $L^2(\Omega)$. We state this result in
the following proposition.

\begin{proposition}
Let $u\in H^1_0(\Omega)$ be the solution of the problem
\[
(\grad u,\grad v)=(g,v)\quad\forall v\in H^1_0(\Omega)
\]
for some $g\in L^2(\Omega)$. Then $\bfsigma=\grad u$ belongs to $\Hdiv$ and
satisfies~\eqref{eq:iscont} with $C=1$.

\end{proposition}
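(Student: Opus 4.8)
The plan is to observe that the variational identity characterizing $u$ already encodes the equation $\div\bfsigma = -g$ in the sense of distributions, so that $\bfsigma = \grad u$ has an $L^2$ divergence and therefore belongs to $\Hdiv$; the two relations in~\eqref{eq:iscont} are then immediate consequences of the very definition $\bfsigma = \grad u$, with constant $C=1$.

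First I would restrict the test functions in the weak formulation $(\grad u,\grad v)=(g,v)$ to $v\in C_c^\infty(\Omega)\subset H^1_0(\Omega)$. For such $v$ the identity reads $\int_\Omega \grad u\cdot\grad v = \int_\Omega g\,v$, which by the definition of the distributional divergence of the vector field $\grad u$ is exactly $\langle\div(\grad u),v\rangle = -\int_\Omega g\,v$. Hence the distributional divergence of $\bfsigma=\grad u$ coincides with the $L^2(\Omega)$ function $-g$. Since moreover $\bfsigma\in L^2(\Omega)^2$ because $u\in H^1_0(\Omega)$, this shows $\bfsigma\in\Hdiv$.

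Next, choosing $v=u$ in the weak formulation — or simply using $\bfsigma=\grad u$ directly — gives $(\bfsigma,\grad u)=(\grad u,\grad u)=\|\grad u\|_{L^2(\Omega)}^2$, which is the first line of~\eqref{eq:iscont}. The second line is the trivial equality $\|\bfsigma\|_{L^2(\Omega)}=\|\grad u\|_{L^2(\Omega)}$, so the estimate holds with $C=1$.

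There is essentially no obstacle: the only step requiring a moment of care is the passage from the variational identity to the distributional identification of $\div\bfsigma$, which is standard and relies on the density of $C_c^\infty(\Omega)$ among test functions. The substance of the proposition is conceptual rather than technical — it records that an $\Hdiv$-conforming realization of the continuous supremizer of the inf-sup condition does exist as soon as $u$ enjoys the extra regularity $\grad u\in\Hdiv$, in contrast with the discrete setting where $\grad\Qh\not\subset\Vh$.
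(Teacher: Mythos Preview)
Your argument is correct. The paper actually states this proposition without proof, treating it as a standard observation; your proposal supplies precisely the routine reasoning one would expect---identifying $\div(\grad u)=-g$ distributionally from the weak formulation and then reading off~\eqref{eq:iscont} from the definition $\bfsigma=\grad u$.
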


We now try to mimic this proposition at the discrete level. Since the
divergence of $\Vh$ is piecewise constant, it is natural to start by
considering $u_h\in\Qh$ that solves
\begin{equation}
(\grad u_h,\grad v)=(g_h,v)\quad\forall v\in\Qh
\label{eq:poissonappendix}
\end{equation}
for some piecewise constant right hand side $g_h$. If we define
$\bfsigma_h=\grad u_h$ we have that $\bfsigma_h$ is not in $\Hdiv$ since
$\grad u_h$ has only the tangential component continuous across the elements
while we would need the normal one. We are then led to some sort of
equilibration strategy, in the spirit
of~\cite{PS,Braess,BraSch:08,Martin,survey}. The optimal way to achieve the
equilibration is to solve a global problem and to define $\bfsigma_h\in\Vh$ as
the solution of the mixed problem: find $\bfsigma_h\in\Vh$ and $p_h\in\Po$
such that
\begin{equation}
\left\{
\aligned
&(\bfsigma_h,\bftau)+(\div\bftau,p_h)=0&&\forall\bftau\in\Vh\\
&(\div\bfsigma_h,q)=-(g_h,q)&&\forall q\in\Po
\endaligned
\right.
\label{eq:mixedappendix}
\end{equation}
Then we have $\div\bfsigma_h=-g_h$ which implies
\[
(\bfsigma,\grad u_h)=(g_h,u_h)=\|\grad u_h\|_{L^2(\Omega)}^2
\]
Hence, we get a bound for the discrete inf-sup condition if we can estimate
the constant $C(h)$ for which it holds
\[
\|\bfsigma_h\|_{L^2(\Omega)}\le C(h)\|\grad u_h\|_{L^2(\Omega)}.
\]
Clearly, from~\eqref{eq:mixedappendix} we have
$\|\bfsigma_h\|_{L^2(\Omega)}\le C\|g_h\|_{L^2(\Omega)}$ so that we need a
bound of $\|g_h\|_{L^2(\Omega)}$ in terms of $\|\grad u_h\|_{L^2(\Omega)}$.
Since $u_h$ solves~\eqref{eq:poissonappendix}, this bound is related to an
inf-sup condition between the spaces $\Qh$ and $\Po$. More precisely, the
following lemma holds true.

\begin{lemma}
Let $\Qzero$ be the subspace of $\Qh$ defined as the solutions
of~\eqref{eq:poissonappendix} for some $g_h\in\Po$. Let $\zeta(h)>0$ be such
that the following inf-sup condition holds true
\begin{equation}
\inf_{w_h\in\Qh}\sup_{g_h\in\Po}
\frac{(g_h,w_h)}{\|g_h\|_{L^2(\Omega)}\|w_h\|_{L^2(\Omega)}}\ge\zeta(h).
\label{eq:p1p0}
\end{equation}
Then the inf-sup condition holds true for the space $\Qzero$
\[
\inf_{v\in\Qzero}\sup_{\bftau\in\Vh}\frac{(\bftau,\grad v)}
{\|v\|_{H^1(\Omega)}\|\bftau\|_{L^2(\Omega)}}\ge C\zeta(h).
\]
\end{lemma}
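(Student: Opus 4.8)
The plan is to read the statement as the discrete counterpart of the continuous proposition proved just above. Given $v\in\Qzero$, by definition it solves~\eqref{eq:poissonappendix} for some $g_h\in\Po$, which I take of minimal $L^2(\Omega)$-norm among the data compatible with $v$, so that $g_h$ lies in $\projo\Qh$. Since $v\in H^1_0(\Omega)$, Friedrichs' inequality lets me replace $\|v\|_{H^1(\Omega)}$ by $\|\grad v\|_{L^2(\Omega)}$, so it suffices to prove $\sup_{\bftau\in\Vh}(\bftau,\grad v)/\|\bftau\|_{L^2(\Omega)}\ge C\zeta(h)\|\grad v\|_{L^2(\Omega)}$, and the natural candidate in the supremum is the equilibrated flux $\bfsigma_h\in\Vh$ produced by~\eqref{eq:mixedappendix} with right-hand side $g_h$.

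The first step is to record the three facts about $\bfsigma_h$ that already appear in the discussion preceding the statement: $\div\bfsigma_h=-g_h$ (because $g_h\in\Po=\div\Vh$); the stability bound $\|\bfsigma_h\|_{L^2(\Omega)}\le C\|g_h\|_{L^2(\Omega)}$, which comes from the standard, $h$-uniform inf-sup for the pair $\Vh$--$\Po$; and, integrating by parts and then testing~\eqref{eq:poissonappendix} with $v$ itself,
\[
(\bfsigma_h,\grad v)=-(\div\bfsigma_h,v)=(g_h,v)=(\grad v,\grad v)=\|\grad v\|_{L^2(\Omega)}^2 .
\]
Choosing $\bftau=\bfsigma_h$ in the supremum then gives
\[
\sup_{\bftau\in\Vh}\frac{(\bftau,\grad v)}{\|\bftau\|_{L^2(\Omega)}}\ \ge\ \frac{\|\grad v\|_{L^2(\Omega)}^2}{\|\bfsigma_h\|_{L^2(\Omega)}}\ \ge\ \frac{\|\grad v\|_{L^2(\Omega)}^2}{C\,\|g_h\|_{L^2(\Omega)}},
\]
so the whole lemma reduces to the estimate $\|g_h\|_{L^2(\Omega)}\le C\zeta(h)^{-1}\|\grad v\|_{L^2(\Omega)}$, i.e. to controlling the data $g_h$ by the gradient of the discrete solution it produces.

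For this last estimate I would use that $g_h\in\projo\Qh$ and that $\projo$ is an $L^2$-orthogonal projection, so that $\|g_h\|_{L^2(\Omega)}=\sup_{w\in\Qh}(g_h,\projo w)/\|\projo w\|_{L^2(\Omega)}$; moreover $(g_h,\projo w)=(g_h,w)=(\grad v,\grad w)$ by~\eqref{eq:poissonappendix}. Inserting the inf-sup~\eqref{eq:p1p0} to bound $\|\projo w\|_{L^2(\Omega)}$ from below by $\zeta(h)$ times the $H^1$-seminorm of $w$ (the scaling matching the right-hand side of the target inf-sup), and then absorbing $\|\grad w\|_{L^2(\Omega)}$ by invoking once more that $v$ solves~\eqref{eq:poissonappendix} --- so that $\sup_{w\in\Qh}(\grad v,\grad w)/\|\grad w\|_{L^2(\Omega)}=\|\grad v\|_{L^2(\Omega)}$, the maximum being attained at $w=v$ --- one arrives at $\|g_h\|_{L^2(\Omega)}\le C\zeta(h)^{-1}\|\grad v\|_{L^2(\Omega)}$. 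Combined with the two displays above and Friedrichs' inequality this yields the asserted inf-sup with a constant proportional to $\zeta(h)$.

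The delicate point, and the step I expect to be the main obstacle, is precisely this norm bookkeeping. A careless substitution of~\eqref{eq:p1p0} leaves a ratio $(\grad v,\grad w)/\|w\|_{L^2(\Omega)}$, which by an inverse inequality is only $O(h^{-1})\|\grad v\|_{L^2(\Omega)}$ and would degrade the final constant to $O(h\,\zeta(h))$; one therefore has to exploit the specific structure of $\Qzero$, namely that its elements are discrete Poisson solutions with piecewise-constant data --- so that $g_h$ genuinely represents the functional $(\grad v,\grad\cdot)$ on $\Qh$ --- in order to trade $\|w\|_{L^2(\Omega)}$ for $\|\grad w\|_{L^2(\Omega)}$ and recover the sharp $O(1)$ constant. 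In effect the lemma rests on the identity $\sup_{w\in\Qh}(\grad v,\grad w)/\|\grad w\|_{L^2(\Omega)}=\|\grad v\|_{L^2(\Omega)}$ for $v\in\Qzero$, which is what lets the inf-sup between $\Qh$ and $\Po$ propagate, without loss of powers of $h$, to the inf-sup between $\Vh$ and $\Qzero$.
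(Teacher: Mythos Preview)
Your overall strategy matches the paper's discussion preceding the lemma: pick the equilibrated flux $\bfsigma_h$ from~\eqref{eq:mixedappendix}, use $\div\bfsigma_h=-g_h$ to get $(\bfsigma_h,\grad v)=\|\grad v\|^2$, use the mixed stability to get $\|\bfsigma_h\|\le C\|g_h\|$, and reduce everything to the bound $\|g_h\|\le C\zeta(h)^{-1}\|\grad v\|$. The paper itself does not supply a proof of this lemma; the surrounding text simply asserts that the required bound on $\|g_h\|$ ``is related to'' the inf-sup~\eqref{eq:p1p0}.

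Where your argument fails is precisely the step you flag as delicate. The inf-sup~\eqref{eq:p1p0} says $\|\projo w\|\ge\zeta(h)\|w\|_{L^2(\Omega)}$, not $\|\projo w\|\ge\zeta(h)\|\grad w\|_{L^2(\Omega)}$; your sentence ``inserting the inf-sup~\eqref{eq:p1p0} to bound $\|\projo w\|_{L^2(\Omega)}$ from below by $\zeta(h)$ times the $H^1$-seminorm of $w$'' is therefore a misreading. Once you apply~\eqref{eq:p1p0} correctly you are left with
\[
\|g_h\|\ \le\ \frac{1}{\zeta(h)}\,\sup_{w\in\Qh}\frac{(\grad v,\grad w)}{\|w\|_{L^2(\Omega)}},
\]
and this supremum is the norm of the discrete Laplacian applied to $v$; it is genuinely of order $\rho(h)\|\grad v\|$ in general, with $\rho(h)$ the inverse-inequality constant. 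Your proposed rescue---the identity $\sup_{w}(\grad v,\grad w)/\|\grad w\|=\|\grad v\|$---is true for \emph{every} $v\in\Qh$, so it cannot be the place where the special structure of $\Qzero$ enters, and it does not convert the $\|w\|_{L^2}$ denominator you actually have into the $\|\grad w\|$ denominator you want.

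In fact the paper's own rigorous result, Theorem~\ref{th:fleur} in Appendix~\ref{ap:fleur}, carries out exactly this computation and \emph{keeps} the inverse-inequality factor: it obtains $\zeta(h)\|g_0\|\le\rho(h)\|\grad u_h\|$ and ends with $C_\Pi\le Ch\rho(h)/\zeta(h)$. The clean $C\zeta(h)$ of the lemma is recovered only on quasi-uniform meshes, where $h\rho(h)$ is bounded, and the compensating factor $h$ comes from the local flux reconstruction, not from the global mixed solve you (and the lemma's preamble) use. So the gap in your argument is real: with the global $\bfsigma_h$ and the stated inf-sup one obtains only $C\zeta(h)/\rho(h)$, and there is no mechanism intrinsic to $\Qzero$ that removes the inverse-inequality loss.
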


The inf-sup constant $\zeta(h)$ in~\eqref{eq:p1p0} can be investigated
theoretically on special meshes or in the case of a one dimensional domain. It
turns out that its behavior is similar to the one of $\beta_h$
in~\eqref{eq:betah}. For instance, Table~\ref{tb:p1p0} shows the value of the
lowest eigenvalue associated with the inf-sup constant (computed as in
Section~\ref{se:infsuph}). It turns out that with a uniform mesh $\zeta(h)$
decays as O(h) while with a non structured mesh the decay is less pronounced.

\begin{table}
\begin{tabular}{ l|l|l }
h &\meshnonstructured mesh&\meshright mesh\\
\hline
$1/2$  & 0.66666667 &0.66666667\\
$1/2^2$& 0.20980372 &0.33333333\\
$1/2^3$& 0.09283759 &0.11409783\\
$1/2^4$& 0.08061165 &0.03137791\\
$1/2^5$& 0.06760737 &0.00803861\\
$1/2^6$& 0.0626653  &0.00202209
\end{tabular}
\caption{Lowest eigenvalue associated with the inf-sup constant
in~\eqref{eq:p1p0}, corresponding to the square of $\zeta(h)$}
\label{tb:p1p0}
\end{table}

We conclude this appendix by showing the behavior of the \emph{worse} $w_h$
in~\eqref{eq:p1p0} as it comes out from the numerical experiments. In one
dimension, where the behavior of $\zeta(h)$ is $O(h^2)$ in agreement
with~\cite{1D}, we get the highly oscillating function plotted in
Figure~\ref{fg:1D}.
\begin{figure}
\includegraphics[width=5cm]{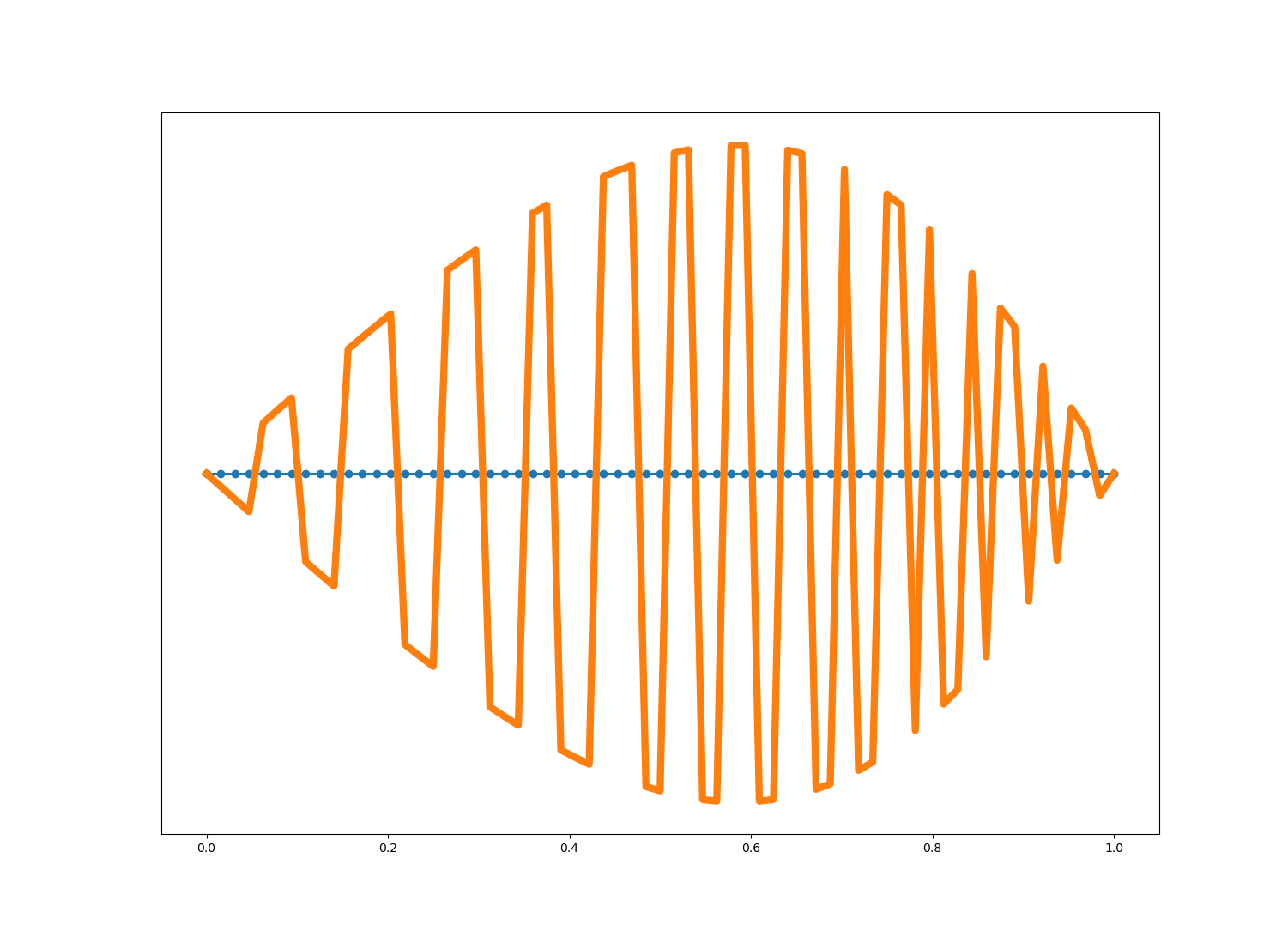}
\caption{The function $w_h$ corresponding to the first singular value
of~\eqref{eq:p1p0} in one dimension}
\label{fg:1D}
\end{figure}
The analogous functions in the two dimensional \meshright and
\meshnonstructured meshes are plotted in Figures~\ref{fg:2Dr}
and~\ref{fg:2Du}, respectively.

\begin{figure}
\includegraphics[width=5cm]{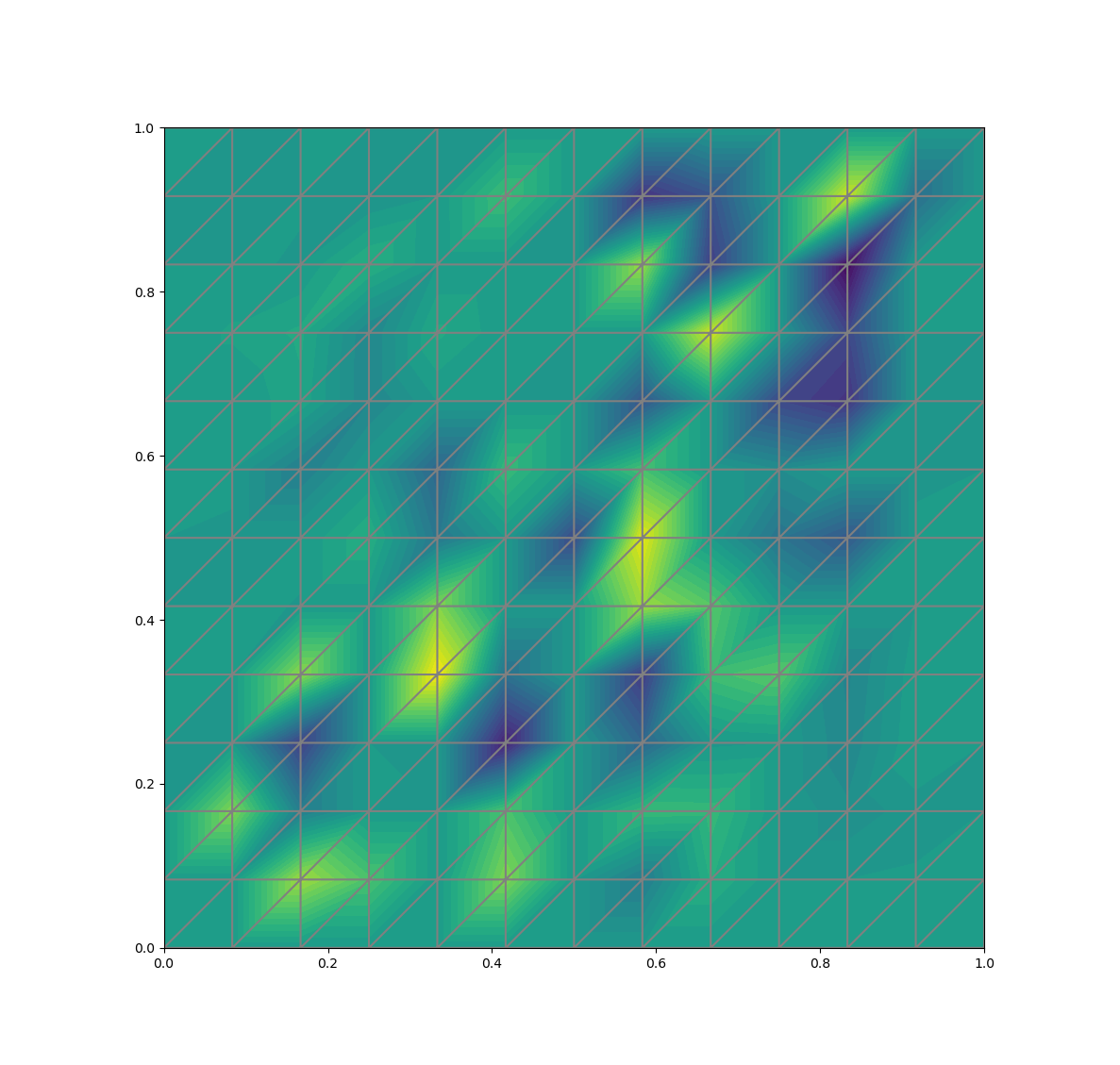}
\caption{The function $w_h$ corresponding to the first singular value
of~\eqref{eq:p1p0} on the \meshright mesh}
\label{fg:2Dr}
\end{figure}

\begin{figure}
\includegraphics[width=5cm]{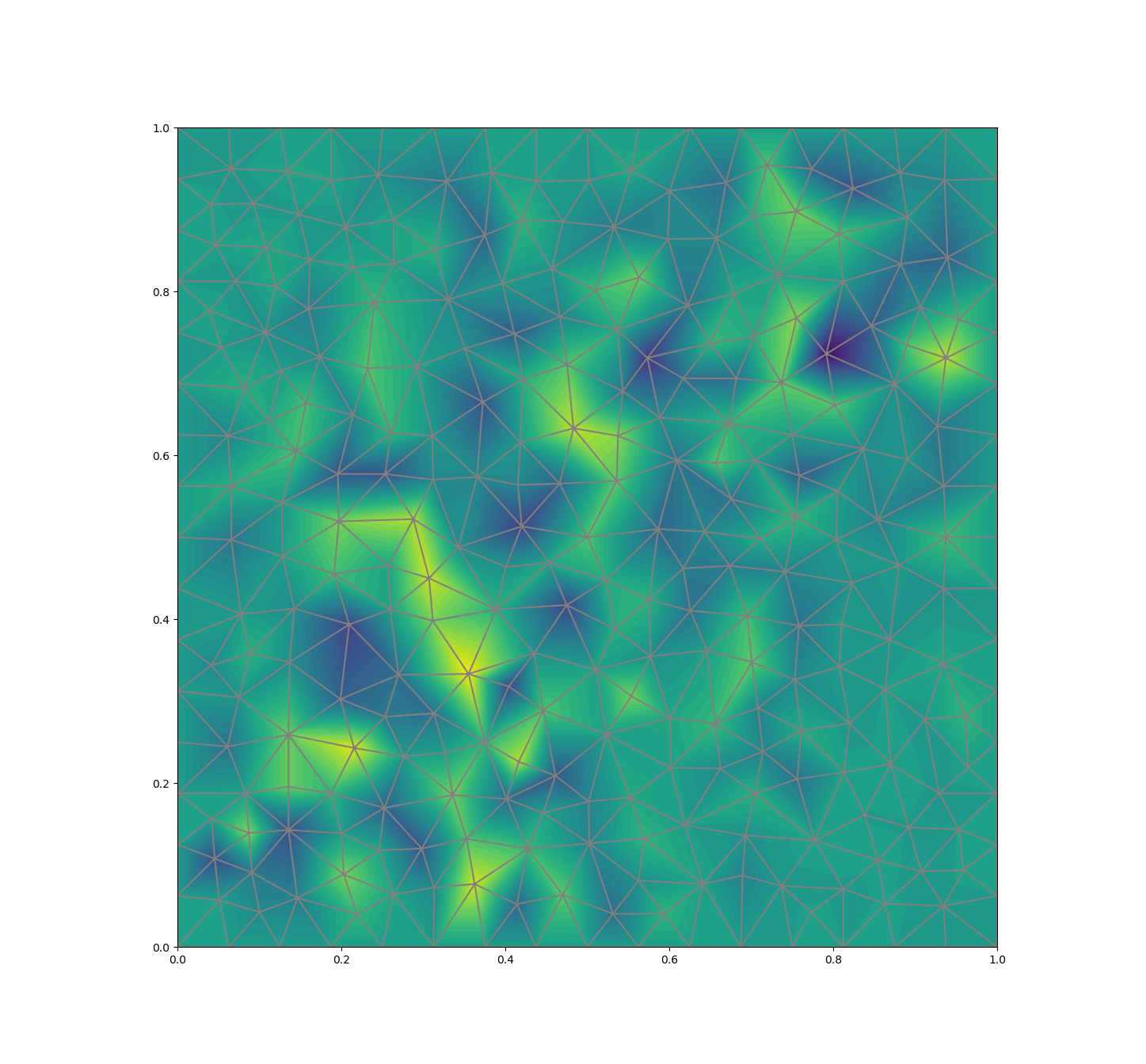}
\caption{The function $w_h$ corresponding to the first singular value
of~\eqref{eq:p1p0} on the \meshnonstructured mesh}
\label{fg:2Du}
\end{figure}

\section{Some connections with flux equilibration}
\label{ap:fleur}

We have seen that the inf-sup constant $\beta_h$ in general is not bounded
below independently of $h$. We have also seen in several parts of this paper
some analogies between the inf-sup condition and the well known flux
equilibration strategy used, for instance, in the a posteriori analysis of
standard finite elements (see, for
instance,~\cite{PS,Braess,BraSch:08,Martin,survey}).
In this appendix we explore this connections in more detail.

To this aim, we make use of the following natural modification of the usual
Fortin trick~\cite{bbf}.

\begin{proposition}

Let $\Qzero$ be a subspace of $\Qh$ and assume that there exists a linear
projection $\Pi:\grad(\Qzero)\to\Vh$ such that for any
$\bftau\in\grad(\Qzero)$
\begin{equation}
\aligned
&(\bftau-\Pi\bftau,\grad v)=0&&\forall v\in\Qzero\\
&\|\Pi\bftau\|_{L^2(\Omega)}\le C_{\Pi}\|\bftau\|_{L^2(\Omega)}.
\endaligned
\label{eq:fortin}
\end{equation}
Then the following inf-sup condition holds true
\begin{equation}
\inf_{v\in\Qzero}\sup_{\bftau\in\Vh}
\frac{(\bftau,\grad v)}{\||\bftau\|_{L^2(\Omega)}\|v\|_{H^1(\Omega)}}
\ge\beta
\label{eq:infsup0}
\end{equation}
with $\beta=1/(C_PC_\Pi)$, being $C_P$ the Poincar\'e constant.

\label{pr:fortin}
\end{proposition}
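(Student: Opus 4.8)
The plan is to run the classical Fortin argument, using the hypothesized operator $\Pi$ exactly as a Fortin operator, but defined only on the finite-dimensional subspace $\grad(\Qzero)$ rather than on the whole space. The crucial structural remark is that, for any $v\in\Qzero$, the gradient $\grad v$ already lies in $\grad(\Qzero)$, so $\Pi\grad v$ is a well-defined element of $\Vh$ and is therefore an admissible competitor in the supremum defining the discrete inf-sup quantity in~\eqref{eq:infsup0}.

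First I would fix $v\in\Qzero$ with $v\ne0$; since $\Qzero\subset\Qh\subset H^1_0(\Omega)$, this forces $\grad v\ne0$. Testing the first relation in~\eqref{eq:fortin} with this same $v$ gives
\[
(\Pi\grad v,\grad v)=(\grad v,\grad v)-(\grad v-\Pi\grad v,\grad v)=\|\grad v\|_{L^2(\Omega)}^2>0,
\]
which in particular shows $\Pi\grad v\ne0$, so that the quotient below is well defined. Then, choosing $\bftau=\Pi\grad v$ in the supremum and using the continuity bound in~\eqref{eq:fortin}, I would obtain
\[
\sup_{\bftau\in\Vh}\frac{(\bftau,\grad v)}{\|\bftau\|_{L^2(\Omega)}}
\ge\frac{(\Pi\grad v,\grad v)}{\|\Pi\grad v\|_{L^2(\Omega)}}
=\frac{\|\grad v\|_{L^2(\Omega)}^2}{\|\Pi\grad v\|_{L^2(\Omega)}}
\ge\frac{1}{C_\Pi}\|\grad v\|_{L^2(\Omega)}.
\]
To finish, I would invoke the Poincar\'e inequality on $H^1_0(\Omega)$, namely $\|v\|_{H^1(\Omega)}\le C_P\|\grad v\|_{L^2(\Omega)}$, divide through by $\|v\|_{H^1(\Omega)}$, and take the infimum over $v\in\Qzero$ to arrive at~\eqref{eq:infsup0} with $\beta=1/(C_PC_\Pi)$.

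The argument is essentially routine and I do not expect a genuine obstacle: the only points that require a little care are verifying that $\Pi\grad v$ does not vanish (needed in order to divide by its $L^2(\Omega)$ norm), which follows for free from the identity above, and correctly accounting for the constant when passing from the seminorm $\|\grad v\|_{L^2(\Omega)}$ to the full norm $\|v\|_{H^1(\Omega)}$ — this is precisely where $C_P$ enters, absorbing the $L^2(\Omega)$ contribution to the $H^1(\Omega)$ norm.
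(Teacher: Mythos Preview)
Your proof is correct and follows essentially the same Fortin argument as the paper: both use the orthogonality relation to get $(\Pi\grad v,\grad v)=\|\grad v\|_{L^2(\Omega)}^2$, then the boundedness of $\Pi$, then Poincar\'e. The only cosmetic difference is that the paper writes the argument as a chain of inequalities passing through $\sup_{\bftau\in\grad(\Qzero)}$, whereas you pick the explicit competitor $\bftau=\Pi\grad v$ directly; your extra care in checking $\Pi\grad v\ne0$ is a nice touch that the paper leaves implicit.
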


\begin{proof}

Take $v\in\Qzero$, then we have
\[
\aligned
\|v\|_{H^1(\Omega)}&\le C_P\|\grad v\|_{L^2(\Omega)}
=C_P\frac{(\grad v,\grad v)}{\|\grad v\|_{L^2(\Omega)}}\\
&\le C_P\sup_{\bftau\in\grad(\Qzero)}
\frac{(\bftau,\grad v)}{\|\bftau\|_{L^2(\Omega)}}\\
&=C_P\sup_{\bftau\in\grad(\Qzero)}
\frac{(\Pi\bftau,\grad v)}{\|\bftau\|_{L^2(\Omega)}}
\le{C_P}{C_\Pi}\sup_{\bftau\in\grad(\Qzero)}
\frac{(\Pi\bftau,\grad v)}{\|\Pi\bftau\|_{L^2(\Omega)}}\\
&\le{C_P}{C_\Pi}\sup_{\bftau\in\Vh}
\frac{(\bftau,\grad v)}{\|\bftau\|_{L^2(\Omega)}},
\endaligned
\]
which implies the inf-sup condition~\eqref{eq:infsup0} with
$\beta=1/(C_P C_\Pi)$.

\end{proof}

Looking carefully at the properties of the Fortin projector
in~\eqref{eq:fortin}, we can see that the construction of $\Pi$ consists in
finding a mapping from gradients of continuous piecewise polynomials into the
lowest order Raviart--Thomas space in $\Hdiv$. This problem has been widely
studied, for instance, in the framework of flux equilibration of finite
element spaces.

In this appendix we try to identify a suitable subspace $\Wone$ of $\Qh$ that
could be used as $\Qzero$ in Proposition~\ref{pr:fortin}.
We start with the following heuristic reasoning. Let us take a function
$f\in L^2(\Omega)$ and for each $h$ consider the solution $u_h\in\Qh$ of the
following standard Galerkin problem
\begin{equation}
(\grad u_h,\grad v_h)=(\projo f,v_h)\qquad\forall v_h\in\Qh.
\label{eq:cont}
\end{equation}
Then the most natural way to find an equilibrated $\bfsigma_h\in\Vh$ is to
solve the following global mixed problem: find
$(\bfsigma_h,p_h)\in\Vh\times\Po$ such that
\[
\left\{
\aligned
&(\bfsigma_h,\bftau_h)+(\div\bftau_h,p_h)=0&&\forall\bftau_h\in\Vh\\
&(\div\bfsigma_h,q_h)=-(f,q_h)&&\forall q_h\in\Po.
\endaligned
\right.
\]
By considering $\bftau=\grad u_h$ and $\Pi\bftau=\bfsigma_h$ and observing
that $\div\bfsigma_h=-\projo f$, we then have
\[
(\bftau-\Pi\bftau,\grad v_h)=(\projo f,v)+(\div\bfsigma_h,v)=0
\qquad\forall v\in U_h,
\]
that is, the first Fortin condition in~\eqref{eq:fortin} is satisfied.
Moreover, we can bound $\bfsigma_h$ by triangular inequality as follows
\[
\|\bfsigma_h\|_0\le\|\bfsigma_h-\grad u_h\|_0+\|\grad u_h\|_0.
\]
The first term on the right hand side is going to zero and can be bounded by
\[
\|\bfsigma_h-\grad u_h\|_0\le h^s\|\projo f\|_0
\]
if the solution of the continuous problem corresponding to~\eqref{eq:cont} has
regularity $H^{1+s}(\Omega)$.
In order to estimate $\|\bfsigma_h\|_0$ uniformly in terms of
$\|\grad u_h\|_0$ we then need to bound $\|\projo f\|_0$ uniformly by
$h^{-s}\|\grad u_h\|_0$. Unfortunately this cannot be done in general.

The equilibration technique is used in the next theorem to show how the
Fortin operator restricted to a suitable subspace of $\Qh$ behaves
asymptotically in $h$.

\begin{theorem}

Let $\Wone$ be the following subspace of $\Qh$:
\[
\Wone=\{w\in\Qh\,|\,\exists g_0\in\Po:(\grad w,\grad v)=(g_0,v)\
\forall v\in U_h\},
\]
where $\Po$ is the space of piecewise constant functions. Let $\zeta(h)$ be
the inf-sup constant introduced in~\eqref{eq:p1p0} and $\rho(h)$ a function of
$h$ so that the inverse inequality $\|\grad v\|_0\le\rho(h)\|v\|_0$ holds
true. Then there exists a
Fortin operator $\Pi$ as in Proposition~\ref{pr:fortin} with
$\Qzero=\Wone$ and $C_\Pi\le Ch\rho(h)/\zeta(h)$.
\label{th:fleur}
\end{theorem}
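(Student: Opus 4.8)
The plan is to realize the Fortin operator $\Pi$ by the global flux equilibration sketched just before the statement, and to estimate its continuity constant by playing~\eqref{eq:p1p0} against the inverse inequality, the extra power of $h$ being supplied by the approximation estimate of Lemma~\ref{le:crucial}.

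\emph{Construction.} Given $w\in\Wone$, pick $g_0\in\Po$ with $(\grad w,\grad v)=(g_0,v)$ for all $v\in\Qh$. Since $(g_0,v)=(g_0,\projo v)$, such a $g_0$ is unique up to the $L^2(\Omega)$-orthogonal complement of $\projo(\Qh)$ inside $\Po$, so I select the one lying in $\projo(\Qh)$; this makes $w\mapsto g_0$ linear, and $w$ is then precisely the $P_1$-Galerkin solution of $-\Delta u=g_0$. Because $\grad$ is injective on $\Wone\subset H^1_0(\Omega)$, a linear map on $\grad(\Wone)$ is well defined by setting $\Pi(\grad w)=-\bftau_h^\star$, where $\bftau_h^\star$ is the best $L^2(\Omega)$-approximation in $Z_h(g_0)$ of $\bfsigma:=-\grad u$, with $u\in H^1_0(\Omega)$ the continuous solution of $-\Delta u=g_0$ (so that $\div\bfsigma=g_0\in\Po$ and $Z_h(g_0)$ is a nonempty affine space). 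The first condition in~\eqref{eq:fortin} is then immediate: any $\bftau_h\in Z_h(g_0)$ satisfies $(\bftau_h,\grad v)=-(g_0,v)$ for all $v\in\Qh$, hence $(\Pi(\grad w),\grad v)=(g_0,v)=(\grad w,\grad v)$ for all $v\in\Wone$.

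\emph{Continuity.} I would obtain the bound in two steps. First, $\|g_0\|_{L^2(\Omega)}$ is controlled by writing $g_0=\projo\phi$ with $\phi\in\Qh$ and recalling that~\eqref{eq:p1p0} amounts to $\|\projo z\|_{L^2(\Omega)}\ge\zeta(h)\|z\|_{L^2(\Omega)}$ for $z\in\Qh$: testing the supremum with $v=\phi$ gives $\zeta(h)\|g_0\|_{L^2(\Omega)}\le\sup_{v\in\Qh}\frac{(g_0,v)}{\|v\|_{L^2(\Omega)}}$, while $(g_0,v)=(\grad w,\grad v)\le\rho(h)\|\grad w\|_{L^2(\Omega)}\|v\|_{L^2(\Omega)}$ by the inverse inequality, so that $\|g_0\|_{L^2(\Omega)}\le\frac{\rho(h)}{\zeta(h)}\|\grad w\|_{L^2(\Omega)}$. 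Second, $\|\Pi(\grad w)\|_{L^2(\Omega)}=\|\bftau_h^\star\|_{L^2(\Omega)}\le\|\grad u\|_{L^2(\Omega)}+\inf_{\bftau_h\in Z_h(g_0)}\|\bfsigma-\bftau_h\|_{L^2(\Omega)}\le\|\grad u\|_{L^2(\Omega)}+C\|\bfsigma-\bfsigma^I\|_{L^2(\Omega)}$, using the triangle inequality and Lemma~\ref{le:crucial}. Since $-\Delta u=g_0\in L^2(\Omega)$, elliptic regularity together with the $RT_0$ interpolation estimate give $\|\bfsigma-\bfsigma^I\|_{L^2(\Omega)}\le Ch\|g_0\|_{L^2(\Omega)}$, and since $w$ is the $\Qh$-Galerkin approximation of the same $u$, C\'ea's lemma gives $\|\grad u\|_{L^2(\Omega)}\le\|\grad w\|_{L^2(\Omega)}+Ch\|g_0\|_{L^2(\Omega)}$. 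Substituting the first-step bound and using $h\rho(h)\gtrsim1$ (inverse inequality) and $\zeta(h)\le1$ to absorb the term $\|\grad w\|_{L^2(\Omega)}$, one arrives at $\|\Pi(\grad w)\|_{L^2(\Omega)}\le C\,\frac{h\rho(h)}{\zeta(h)}\,\|\grad w\|_{L^2(\Omega)}$, that is $C_\Pi\le Ch\rho(h)/\zeta(h)$.

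\emph{Main obstacle.} The delicate point is the second step. The naive estimate $\|\Pi(\grad w)\|_{L^2(\Omega)}\le C\|g_0\|_{L^2(\Omega)}$ only yields the weaker $C_\Pi\le C\rho(h)/\zeta(h)$; gaining the extra factor $h$ hinges on the equilibrated flux being an $O(h)$-accurate approximation of the discrete gradient $\grad w$, which is exactly the content of Lemma~\ref{le:crucial} combined with the $H^2(\Omega)$-regularity of the auxiliary Poisson problem $-\Delta u=g_0$ — available on the convex domains considered here, whereas on a non-convex domain one would only recover the corresponding $O(h^s)$. Care is also needed to ensure that the $RT_0$ interpolant $\bfsigma^I$ is well defined and that, thanks to $\div\bfsigma\in\Po$ and the commuting diagram property $\div\bfsigma^I=\projo\div\bfsigma$, the difference $\div(\bfsigma-\bfsigma^I)$ vanishes, so that the $\Hdiv$-quasi-optimality underlying Lemma~\ref{le:crucial} collapses to an $L^2(\Omega)$ estimate.
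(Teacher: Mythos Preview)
Your argument is correct but follows a genuinely different route from the paper, and the difference matters for the scope of the result.

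The paper realises the Fortin operator by the \emph{local} Braess--Sch\"oberl flux equilibration: it sets $\Pi(\grad u_h)=\grad u_h+\sum_z\bfsigma_z$, where each $\bfsigma_z$ is an explicit discontinuous $RT_0$ correction on the vertex patch $\omega_z$ enforcing $\div\bfsigma_h=-g_0$ and killing the normal jumps of $\grad u_h$. From the explicit formulas for the coefficients $\tau_{E_i,z}^{\pm}$ one reads off directly the bound $\|\bfsigma_h\|_{L^2(\Omega)}\le C(\|\grad u_h\|_{L^2(\Omega)}+h\|g_0\|_{L^2(\Omega)})$, and the estimate of $\|g_0\|_{L^2(\Omega)}$ via $\zeta(h)$ and $\rho(h)$ is then identical to your first step. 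The key point is that the factor $h$ in front of $\|g_0\|_{L^2(\Omega)}$ comes from the purely local estimate $|(g_0,\phi_z)_T|\le Ch\|g_0\|_{0,T}$ and involves no PDE regularity whatsoever; the result therefore holds on any shape-regular mesh of any polygonal domain.

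Your construction is instead \emph{global}: you pick the equilibrated flux as the best $L^2(\Omega)$-approximation in $Z_h(g_0)$ of the exact $\bfsigma=-\grad u$, and extract the factor $h$ from Lemma~\ref{le:crucial} combined with the $RT_0$ interpolation estimate and C\'ea's lemma. This is conceptually cleaner and reuses machinery already in the paper, but---as you note---both the interpolation bound $\|\bfsigma-\bfsigma^I\|_{L^2(\Omega)}\le Ch\|g_0\|_{L^2(\Omega)}$ and the C\'ea step $\|\grad(u-w)\|_{L^2(\Omega)}\le Ch\|g_0\|_{L^2(\Omega)}$ rely on $H^2(\Omega)$ elliptic regularity, hence on convexity of $\Omega$. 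On the L-shaped domain also treated in the paper you would only obtain $C_\Pi\le Ch^s\rho(h)/\zeta(h)$ with $s<1$, which is strictly weaker than the stated theorem. So your route proves the theorem under an additional convexity hypothesis, while the paper's local equilibration proves it in the generality of Section~\ref{se:dualmixed}.
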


\begin{proof}
We denote by $\mathcal{T}_h$ our triangulation of $\Omega$ and by
$\mathcal{E}_h$ the skeleton of the edges.
Given $u_h\in\Wone$, we consider $\bftau=\grad u_h$ so that the flux
reconstruction procedure provided in~\cite{BraSch:08} and described with
explicit formulas in \cite{cism} will give a function
$\Pi\bftau=\bfsigma_h\in\Vh$.
The construction is local and is performed as
\[
\bfsigma_h = \nabla u_h+ \sum\limits_{ z \in \mathcal{V}} \bfsigma_{z}
\]
where $\bfsigma_z$ is a \emph{discontinuous} $RT_0$ function ($DRT_0$)
supported on the patch 
\[
\omega_z = \{T \in \mathcal T:z \text{ is a vertex of } T \}.
\]
As the space $DRT_0$ consists of edge basis functions, we will also consider
the set
\[
\mathcal E_z = \{E \in \mathcal E : z \text{ is a vertex of } E \}.
\]

Let $g_0\in\Po$ be such that $(\grad u_h,\grad v)=(g_0,v)$ $\forall v\in U_h$
as in the definition of $\Wone$.
Following the standard procedure of flux reconstruction we have that
$\bfsigma_h$ belongs to $\Vh$ and that the Fortin property
$(\grad u_h-\bfsigma_h,\grad v)=0$ ($\forall v\in\Wone$) is satisfied if the
$\bfsigma_{z}$ are chosen such that
\begin{equation}
\aligned
&\div\bfsigma_z|_T 
= -\frac{1}{|T|} (g_0,\phi_z)_T 
&&\forall T \in \omega_z\\
&\jump{\bfsigma_z \cdot \bn} {E}= - \frac{1}{2}\jump{\grad u_h \cdot \bn}{E}
&&\forall E \in \mathcal E_z\\
&\bfsigma_z\cdot\bn=0&&\text{on }\partial\omega_z
\endaligned
\label{strrecopatch2}
\end{equation}
where $\phi_z$ is the linear nodal basis function corresponding to the
node $z$ and vanishing on the boundary of the patch $\omega_z$.

Indeed, the second equation in~\eqref{strrecopatch2} guarantees that
$\jump{\bfsigma_h}{E}=0$ for all internal edges $E$, since each edge belongs
to exactly two patches. Hence $\bfsigma_h$ belongs to $\Hdiv$ and we can
evaluate its divergence element by element. From the first equation
in~\eqref{strrecopatch2} we have
\[
\left(\div\sum\limits_{ z \in \mathcal{V}} \bfsigma_{z}\right)\Bigg|_T=
-\frac1{|T|}\sum_{i=1}^3(g_0,\phi_{z_i})_T=-\frac1{|T|}(g_0,1)_T=g_0|_T,
\]
where $z_i$ ($i=1,2,3$) are the three vertices of $T$. From
$(\div\grad u_h)|_T=0$ it follows $\div\bfsigma_h=-g_0$, so that we get the
Fortin property
\[
(\bfsigma_h,\grad v)=(g_0,v)=(\grad u_h,\grad v)\qquad\forall v\in\Qh.
\]

The edge basis functions considered for the space $\Vh$ are supported in the
two adjacent triangles of the edge $E$. 
We denote by $T^-_E$ and $T^+_E$ the two triangles adjacent to $E$ and define
an edge oriented basis
$\{\bfpsi_E^-\}_{E\in\mathcal{E}_h}
\cup\{\bfpsi_E^+\}_{E\in\mathcal{E}_h,E\not\subset\partial\Omega}$
for $DRT_0$, using the basis functions 
\[
\bfpsi_E^-(\bx) = \begin{cases}
-\frac{1}{2|T^-_E| }(\bx - P^-_E)  &\text{on } T^-_E\\
0 &\text{elsewhere}
\end{cases} 
\text{ and }
\bfpsi_E^+(\bx)
= 
\begin{cases}
\frac{1}{2|T^+_E| }(\bx - P^+_E)  &  \text{on } T^+_E\\
0 & \text{elsewhere}
\end{cases}
\]
where $P^-_E$ and $P^+_E$ are the vertices of $T^-_E$ and $T^+_E$,
respectively, not shared by the two triangles.
This basis uses a similar representation of the one presented
in~\cite{BahCar:05} although here
\[
\div\bfpsi_E^\pm =\pm\frac{1}{|T^\pm_E|}
\]
and thus   
\[
\int_E\bfpsi^\pm_E\cdot\bn_E=\pm\left(\div\bfpsi_E^\pm,1\right)_{T^\pm_E}=1,
\]
where the normal $\bn_E$ is pointing from $T^+_E$ to $T^-_E$. It follows
\[
\bfpsi^\pm_E\cdot\bn_E=\frac{1}{|E|}.
\]
These basis functions allow for the following construction: 
\[
\bfsigma_z = \sum\limits_{E \in \mathcal E_z} 
(\tau_{E,z}^- \bfpsi_E^-+\tau_{E,z}^+ \bfpsi_E^+)
\]
where the coefficients $\tau_{E,z}^+$ and $\tau_{E,z}^-$ will be chosen so
that the equilibration conditions~\eqref{strrecopatch2} hold.

For the computation of the coefficients, we index the triangles in
the patch from $1$ to $n_z$ and define $T_0:=T_{n_z}$ and $ T_{n_z+1}:=T_{1}$.
Further, we define $E_i = T_i \cap T_{i+1}$ and $T^+_{E_i} = T_{i+1}$. Then, 
\[
\div\bfsigma_z|_{T_i} = 
\tau_{E_{i-1},z}^+ \frac{1}{|T_{i}|}
-\tau_{E_i,z}^-  \frac{1}{|T_{i}|}
\]
and the first condition in~\eqref{strrecopatch2} reads
\[
\tau_{E_i,z}^-
= (g_0,\phi_z)_{T_i} 
+ \tau^+_{E_{i-1},z} \quad \forall i=1,\dots,n_z.
\]
The second condition in~\eqref{strrecopatch2} implies
\[
\tau_{E_i,z}^+ - \tau_{E_i,z}^-= 
-\frac{|E_i|}{2} \jump{\grad u_h \cdot \bn}{E_i} \quad \forall i=1,\dots,n_z.
\]
This leads to 
\[
\tau_{E_i,z}^+-\tau^+_{E_{i-1},z}=
-\frac{|E_i|}{2}\jump{\grad u_h \cdot \bn}{E_i}
+(g_0,\phi_z)_{T_i} 
\quad\forall i=1,\dots,n_z
\]
and thus to
\[
\tau_{E_i,z}^+=  \tau^+_{E_{0},z}
+\sum\limits_{j=1}^{i}\left(-\frac{|E_j|}{2}\jump{\grad u_h \cdot \bn}{E_j}
+(g_0,\phi_z)_{T_j}\right) 
\quad \forall i=1,\dots,n_z.
\]
We can bound the two terms on the right hand side that are involved in the
summation as follows:
\[
\left|\frac{|E_j|}{2}\jump{\nabla u_h\cdot\bn}{E_j}\right|\le
C\|\grad u_h \|_{0,\omega_z}
\]
and
\[
|(g_0,\phi_z)_{T_j}|\le
\|g_0\|_{0,T_j}\|\phi_z\|_{0,T_j}\le Ch\|g_0\|_{0,T_j}.
\]
Choosing $\tau^+_{E_{0},z} =0$ we then have
%
\[
\aligned
&|\tau_{E_i,z}^+| \le C(\|\grad u_h \|_{0,\omega_z}+h\|g_0\|_{0,\omega_z})\\
&|\tau_{E_i,z}^-| \le C(\|\grad u_h \|_{0,\omega_z}+h\|g_0\|_{0,\omega_z})
&&\forall i=1,\dots,n_z.
\endaligned
\]
By a scaling argument or by using a suitable quadrature rule we see that
\[
\|\bfpsi_E^\pm \|_0^2=\frac{1}{4|T_E^\pm|^2}\|\bx-P_E^\pm\|_0^2
\le Ch^2/|T_E^\pm|\le C
\]
so that it holds
$\|\bfsigma_z\|_{0,\omega_z}\le
C(\|\grad u_h\|_{0,\omega_z}+h\|g_0\|_{0,\omega_z})$.
By putting all the patches together and considering that the intersections
between patches contain a bounded number of elements and that each element
belongs to a bounded number of patches we get
\[
\|\bfsigma_h\|_{L^2(\Omega)}\le
C(\|\grad u_h\|_{L^2(\Omega)}+h\|g_0\|_{L^2(\Omega)}).
\]

It remains to estimate $g_h$, which can be done by considering the inf-sup
constant discussed in Appendix~\ref{ap:asympt} concerning the $P_0-P_1$
element~\eqref{eq:p1p0}. By the definition of $g_0$ we have
\[
(g_0,v_h)=(\grad u_h,\grad v_h)\quad\forall v_h\in\Qh
\]
and hence
\[
\aligned
\zeta(h)\|g_0\|_0&\le\sup_{v_h\in\Qh}\frac{(g_0,v_h)}{\|v\|_0}
=\sup_{v_h\in\Qh}\frac{(\grad u_h,\grad v_h)}{\|v\|_0}
\le\sup_{v_h\in\Qh}\frac{\|\grad u_h\|_0\|\grad v_h\|_0}{\|v_h\|_0}\\
&\le\rho(h)\|\grad u_h\|_0.
\endaligned
\]
Finally, we arrive at the final estimate
\[
\|\bfsigma_h\|_{L^2(\Omega)}\le C\|\grad u_h\|_{L^2(\Omega)}
(1+h\rho(h)/\zeta(h)),
\]
which leads to the bound $C_\Pi\le Ch\rho(h)/\zeta(h)$.
\end{proof}

Although the above considerations do not provide a rigorous proof that the
inf-sup constant is vanishing with $h$, they give a clear indication that we
should not expect the constant $\beta_h$ to be uniformly bounded away from
zero. Indeed, the behavior of the inf-sup constant depends on the chosen mesh
as shown in Section~\ref{se:infsuph}: if the mesh is quasiuniform then
$h\rho(h)$ is bounded from above and below so that we have a confirmation that
the inf-sup constant cannot be better than $\zeta(h)$.

\section*{Acknowledgments}

The first author gratefully acknowledges support by the Deutsche
Forschungsgemeinschaft in the Priority Program SPP 1748 \textit{Reliable simulation
techniques in solid mechanics, Development of non standard discretization methods,
mechanical and mathematical analysis} under the project number BE 6511/1-1.
The second author is member of the INdAM Research group GNCS and his research
is partially supported by IMATI/CNR and by PRIN/MIUR.

\bibliographystyle{amsplain}
\bibliography{ref}

\end{document}